\newcommand{\norm}[1]{\left\| #1 \right\|}
\newcommand{\Exp}{\mathop{\mathbb E}\displaylimits}
\newcommand{\dd}{\textup{DA}}
\newtheorem{assumption}{Assumption}
\newcommand{\argmin}{\mathop{\mathrm{arg\,min}{}}}
\newcommand{\ba}{\begin{array}}
	\newcommand{\ea}{\end{array}}
\newcommand{\beq}{\begin{equation}}
\newcommand{\eeq}{\end{equation}}
\newcommand{\beqa}{\begin{eqnarray}}
\newcommand{\eeqa}{\end{eqnarray}}
\newcommand{\beqas}{\begin{eqnarray*}}
	\newcommand{\eeqas}{\end{eqnarray*}}
\newcommand{\bi}{\begin{itemize}}
	\newcommand{\ei}{\end{itemize}}
\def\eqref#1{(\ref{#1})}
\def\bx{{\bar x}}
\def\tx{{\tilde x}}
\def\shownotes{0}
\newcommand{\authnote}[2]{{ $\ll$\textsf{\footnotesize #1 notes: #2}$\gg$}}
\newcommand{\authnote}[2]{}
\begin{document}
\title{Distributed Stochastic Variance Reduced Gradient Methods and A Lower Bound for Communication Complexity}


\author{Jason D. Lee         \and
         Qihang Lin       \and
         Tengyu Ma 
         \and \\
         Tianbao Yang
}


\institute{
        Jason D. Lee \at
              Department of Electrical Engineering and Computer Science, UC Berkeley \\
              \email{jasondlee@berkeley.edu}           
           \and
           Qihang Lin \at
           Tippie College of Business, The University of Iowa\\
           \email{qihang-lin@uiowa.edu}
           \and
           Tengyu Ma \at
           Department of Computer Science, Princeton University\\
           \email{tengyu@cs.princeton.edu}
           \and
           Tianbao Yang \at
           Department of Computer Science, The University of Iowa\\
           \email{tianbao-yang@uiowa.edu}
}

\date{Received: date / Accepted: date}

\maketitle

\begin{abstract}
We study distributed optimization algorithms for minimizing the average of convex functions. The applications include empirical risk minimization problems in statistical machine learning where the datasets are large and have to be stored on different machines. We design a distributed stochastic variance reduced gradient algorithm   that, under certain conditions on the condition number, simultaneously achieves the optimal parallel runtime, amount of communication and rounds of communication among all distributed first-order methods up to constant factors. Our method and its accelerated extension also outperform existing distributed algorithms in terms of the rounds of communication as long as the condition number is not too large compared to the size of data in each machine. We also prove a lower bound for the number of rounds of communication for a broad class of distributed first-order methods including the proposed algorithms in this paper. We show that our accelerated distributed stochastic variance reduced gradient algorithm achieves this lower bound so that it uses the fewest rounds of communication among all distributed first-order algorithms.
\keywords{Distributed Optimization \and Communication Complexity \and Machine Learning \and First-Order Method}
\end{abstract}

\section{Introduction}
\label{intro}

In this paper, we consider the \emph{distributed optimization} problem of minimizing the average of $N$ convex functions in $\mathbb{R}^d$, i.e.,

\begin{equation}
\min_{x\in \mathbb{R}^d}\left\{f(x) := \frac{1}{N}\sum_{i=1}^Nf_i(x)\right\}
\label{eqn:obj}
\end{equation}
\normalsize
using $m$ machines. For simplicity, we assume $N=mn$ for an integer $n$ with $m\ll n$ but all of our results can be easily generalized for a general $N$. Here, $f_i:\mathbb{R}^d\rightarrow\mathbb{R}$ for $i=1,\dots,N$ is convex and $L$-smooth, meaning that $f_i$ is differentiable and its gradient $\nabla f_i$ is $L$-Lipschitz continuous\footnote{In this paper, the norm $\|\cdot\|$ represents Euclidean norm.}, i.e.,
$\|\nabla f_i(x)-\nabla f_i(y)\|\leq L\|x-y\|,~\forall x,y\in\mathbb{R}^d$,
and their average $f$ is $\mu$-strongly convex, i.e.,
$\|\nabla f(x)-\nabla f(y)\|\geq \mu\|x-y\|,~\forall x,y\in\mathbb{R}^d$.
We call
$\kappa = \frac{L}{\mu}$
the \emph{condition number} of function $f$. Note that the function $f$ itself can be $L_f$-smooth, namely,
$\|\nabla f(x)-\nabla f(y)\|\leq L_f\|x-y\|,~\forall x,y\in\mathbb{R}^d$,
for a constant $L_f\leq L$. Let $x^*$ be the unique optimal solution of \eqref{eqn:obj} and a solution $\hat x$ is called an \emph{$\epsilon$-optimal solution}\footnote{If $\hat x$ is a random variable generated by a stochastic algorithm, we call it an $\epsilon$-optimal solution if $\mathbb{E}[f(\hat x)-f(x^*)]\leq\epsilon$.} for \eqref{eqn:obj} if
$f(\hat x)-f(x^*)\leq\epsilon$.

One of the most important applications of problem \eqref{eqn:obj} is \emph{empirical risk minimization} (ERM) in statistics and machine learning. Suppose there exists a set of i.i.d. samples $\{\xi_1,\xi_2,\dots,\xi_N\}$ from an unknown distribution $D$ of a random vector $\xi$. An ERM problem can be formulated as

\begin{equation}
\min_{x\in \mathbb{R}^d}\frac{1}{N}\sum_{i=1}^N\phi(x,\xi_i)
\label{eqn:emr}
\end{equation}
\normalsize
where $x$ represents a group of parameters of a predictive model, $\xi_i$ is the $i$th data point, and $\phi(x,\xi)$ is a loss function. Note that \eqref{eqn:emr} has the form of \eqref{eqn:obj} with each function $f_i(x)$ being $\phi(x,\xi_i)$. Typically, the data point $\xi$ is given as a pair $(a,b)$ where $a\in\mathbb{R}^d$ is a feature vector and $b\in\mathbb{R}$ is either a continuous (in regression problems) or a discrete response (for classification problems). The examples of loss function $\phi(x,\xi)$ with $\xi=(a,b)$ include: square loss in linear regression where $a\in\mathbb{R}^d$, $b\in\mathbb{R}$, and $\phi(x,\xi)=(a^Tx-b)^2$; logistic loss in logistic regression where $a\in\mathbb{R}^d$, $b\in\{1,-1\}$, and $\phi(x,\xi)=\log(1+\exp(-b(a^Tx))$; smooth hinge loss where $a\in\mathbb{R}^d$, $b\in\{1,-1\}$, and

$$
\phi(x,\xi)=\left\{
\begin{array}{ll}
0&\text{ if }ba^Tx\geq1\\
\frac{1}{2}-ba^Tx&\text{ if }ba^Tx\leq0\\
\frac{1}{2}(1-ba^Tx)^2&\text{ otherwise}.
\end{array}
\right.
$$
\normalsize
To improve the statistical generalization properties of the model learned from \eqref{eqn:emr}, a regularization term $\frac{\lambda}{2}\|x\|^2$ is often added to \eqref{eqn:emr} and the problem becomes a regularized ERM problem

\begin{equation}
\min_{x\in \mathbb{R}^d}\frac{1}{N}\sum_{i=1}^N\phi(x,\xi_i)+\frac{\lambda}{2}\|x\|^2
\label{eqn:regemr}
\end{equation}
\normalsize
which still takes the form of \eqref{eqn:obj} with $f_i(x)=\phi(x,\xi_i)+\frac{\lambda}{2}\|x\|^2$.  The parameter $\lambda$ is called an regularization parameter. As argued by~\cite{shamir2014distributed,shamir2014communication,shalev2009stochastic,zhang2015communication}, for ERM problem,  the value of $\lambda$ is typically in the order of $\Theta(1/\sqrt{N}) = \Theta(1/\sqrt{mn})$.

We consider a situation where all $N$ functions are initially stored in the same large storage space that has limited computation power. We assume that each of the $m$ machines we use to solve \eqref{eqn:obj} has a limited memory space of $C$ so that it can load at most $C$ of the $N$ functions in \eqref{eqn:obj}. In the case of ERM, this means each machine can load at most $C$ data points among $\{\xi_1,\xi_2,\dots,\xi_N\}$ in its memory. Since the data point $\xi_i$ uniquely defines $f_i$ in ERM, in the rest of the paper, we will call $f_i$ a data point $i$ or a function $i$ interchangeably.

Throughout the whole paper, we assume that
\begin{assumption}
\label{assumption_C}
The memory space $C$ of each machine satisfies
$n<C< N$
and the quantity
$\tilde n\equiv C-n$
satisfies $\tilde n\geq cn$ for a universal constant $c>0$.
\end{assumption}
The inequality $C<N$ forces us to use more than one, if not all, the machines for solving \eqref{eqn:obj}.
The quantity $\tilde n$ represents the remaining space in each machine after we evenly allocate $N$ data points onto $m$ machines. The inequality $\tilde n\geq cn$  means each machine still has $\Omega(n)$ memory space after such an allocation of data. This can happen when either the machine capacity $C$ or the number of machines $m$ is large enough.

We also assume that we can load the same function to multiple machines so that different machines may share some functions, so the sets of functions in all machines do not necessarily form a partition of $\{f_i\}_{i\in[N]}$. Since no machine can access all $N$ functions, we have to solve \eqref{eqn:obj} by distributed algorithms that alternate between a local computation procedure at each machine, and a round of communication to synchronize and share information among the machines.

\subsection{Communication efficiency and runtime}

To facilitate the theoretical study, we use the following simplified message passing model  from the distributed computation literature~\cite{gropp1996high,dean2008mapreduce}: We assume the communication occurs in rounds -- in each round, (a subset of) machines exchanges messages and, between two rounds, the machines only compute based on their local information (local data points and messages received before).


Given this state of affairs, we study the distributed optimization problem with three performance metrics in mind.
\begin{itemize}
\item \textbf{Local parallel runtime}: The longest running time of $m$ machines spent in local computation, measured in the number of gradient computations, i.e., computing $\nabla f_i(x)$ for any $i$. We also refer it as ``runtime'' for simplicity.
\item \textbf{The amount of communication}:  The total amount of communication among $m$ machines and the center, measured by the number of vectors\footnote{We will only consider communicating data points, or iterates $x$. For simplicity, we assume that the data point and iterates are of the same dimension, but this can be easily generalized.} of size $d$ transmitted.  
\item \textbf{Rounds of communication}: How many times all machines have to pause their local computation and exchange messages. We also refer it as ``rounds'' for simplicity.
\end{itemize}
We will study these performance metrics for the algorithms we propose and compare with other existing techniques. However, the  main focus of this paper is the rounds of communication.

\subsection{Summary of contributions}
\label{summary}

In this paper, we first propose a \emph{distributed stochastic variance reduced gradient} (DSVRG) method, which is simple and easy to implement -- it is essentially a distributed implementation of a well-known  single-machine stochastic variance reduced gradient (SVRG) method~\cite{JohnsonZhang13,XiaoZhang14,Konecny:15b}. We show that the proposed DSVRG algorithm requires $O((1+\frac{\kappa}{n})\log(1/\epsilon))$ rounds of communication to find an $\epsilon$-optimal solution for \eqref{eqn:obj} under Assumption \ref{assumption_C}. The corresponding parallel runtime is $O((n+\kappa)\log(1/\epsilon))$ and the associated amount of communication is $O((m+\frac{\kappa}{n})\log(1/\epsilon))$.

Given these performance metrics of DSVRG, we further ask a key question:

\textit{How can we achieve the optimal parallel runtime, the optimal amount of communication, and the optimal number of rounds of communication simultaneously for solving \eqref{eqn:obj}? }

This paper answers this seemingly ambitious question affirmatively in a reasonable situation: When $\kappa =\Theta( n^{1-2\delta})$ with a constant $0<\delta<\frac{1}{2}$, with an appropriate choices for the parameters in DSVRG (shown in Corollary
\ref{thm:constantrounds}), DSVRG finds an $\epsilon$-optimal solution for~\eqref{eqn:obj} with a parallel runtime of $O(n)$, an $O(m)$ amount of communication and $O(1)$ rounds of communication for any $\epsilon =\frac{1}{n^s}$ where $s$ is any positive constant. Here, the notation $O$ hides a logarithmic term of the optimality gap of an initial solution for DSVRG, which is considered as a constant in the whole paper.

We want to point out that $\kappa =\Theta( n^{1-2\delta})$ is a typical setting for machine learning applications. For example, as argued by~\cite{shamir2014distributed,shamir2014communication,shalev2009stochastic,zhang2015communication}, for ERM, the condition number $\kappa$ is typically in the order of $\Theta(\sqrt{N}) = \Theta(\sqrt{mn})$. Therefore, when the number of machines $m$ is not too large, e.g., when $m\le n^{0.8}$\footnote{If $n=10^5$, then $n^{0.8}= 10^4$ which is already much more than the number of machines in most clusters.}, we have that $\kappa = \Theta(\sqrt{mn})\le n^{0.9}$ (so that $\delta=0.05$). Moreover, $\epsilon = n^{-10}$ (so that $s=10$) is certainly a high enough accuracy for most machine learning applications since it exceeds the machine precision of real numbers, and typically people choose $\epsilon =\Theta(\frac{1}{N})$ in empirical risk minimization.


These performance guarantees of DSVRG, under the specific setting where $\kappa =O( n^{1-2\delta})$ and $\epsilon =O(\frac{1}{n^s})$, are optimal up to constant factors among all distributed first-order methods. First, to solve \eqref{eqn:obj}, all $m$ machines together need to compute at least $\Omega(N)$ gradients~\cite{argarwal14} in total so that each function in $\{f_i\}_{i=1,\dots,N}$ can be accessed at least once. Therefore, at least one machine needs to compute at least $\Omega(n)$ gradients in parallel given any possible allocation of functions. Second, the amount of communication is at least $\Omega(m)$ for even simple Gaussian mean estimation problems~\cite{BGMNW15}, which is a special case of \eqref{eqn:obj}. Third, at least $O(1)$ rounds of communication is needed to integrate the computation results from machines into a final output.

Furthermore, using the generic acceleration techniques developed in~\cite{frostigicml15} and~\cite{lin2015universal}, we propose a \emph{distributed accelerated stochastic variance reduced gradient} (DASVRG) method that further improves the theoretical performance of DSVRG. Under Assumption \ref{assumption_C}, we show that  DASVRG requires only $\tilde O((1+\sqrt{\frac{\kappa}{n}})\log(1/\epsilon))$ rounds of communication to find an $\epsilon$-optimal solution, leading to better theoretical performance than DSVRG. Also, we show that the runtime and the amount of communication for DASVRG are $\tilde O((n+\sqrt{n\kappa})\log(1/\epsilon))$ and $\tilde O(m+m\sqrt{\frac{\kappa}{n}})\log(1/\epsilon))$, respectively. We also prove a lower bound on the rounds of communication that shows any first-order distributed algorithm needs $\tilde \Omega( \sqrt{\frac{\kappa}{n}})\log(1/\epsilon))$ rounds of communication. It means DASVRG is optimal in that it uses the least number of rounds of communication. Since our lower bound indeed can be applied to a broad class of distributed first-order algorithms, it is interesting by itself. Here, and in the rest of the paper, $\tilde O$ and $\tilde \Omega$ hide some logarithmic terms of $\kappa$, $N$, $m$ and $n$.

The rest of this paper is organized as follows. In Section~\ref{sec:related}, we compared the theoretical performance of our methods with some existing work in distributed optimization.  In Section~\ref{sec:dsvrg} and Section~\ref{sec:dppasvrg}, we propose our DSVRG and DASVRG algorithms, respectively, and discuss their theoretical guarantee. In Section \ref{sec:lowerbound}, we prove a lower bound on the number of rounds of communication that a distributed algorithm needs, which demonstrates that DASVRG is optimal.  Finally, we present the numerical experiments in Section~\ref{sec:exp}, and conclude the paper in Section~\ref{sec:conclusion}.

\section{Related Work}
\label{sec:related}

Recently, there have been several distributed optimization algorithms proposed for problem~\eqref{eqn:obj}. We list several of them, including a distributed implementation of the accelerated gradient method (Accel Grad) by Nesterov~\cite{Nesterov04book}\footnote{This is the accelerated gradient method by Nesterov~\cite{Nesterov04book} except that the data points are distributed in $m$ machines to parallelize the computation of $\nabla f(x)$.}, in Table \ref{tab:dist-opt-algs} and present their rounds and runtime for a clear comparison. The algorithms proposed in this paper are DSVRG and DASVRG.

\begin{small}
\begin{table}
\centering
	\begin{tabular}{ |l | c | c | c|}
		\hline
		Algorithm & Rounds & Parallel Runtime & Assumptions \\
		\hline
    		DSVRG & $(1+\frac{\kappa}{n}) \log \frac{1}{\epsilon}$ & $G(n+\kappa) \log \frac{1}{\epsilon}$   & Assumption \ref{assumption_C}\\
		\hline
		DASVRG & $ ( 1+ \sqrt{\frac{\kappa}{n}})  \log ( 1+ \frac{\kappa}{n} ) \log \frac{1}{\epsilon}$ & $G (n+\sqrt{n\kappa})  \log ( 1+ \frac{\kappa}{n} ) \log \frac{1}{\epsilon}$ & Assumption \ref{assumption_C}\\
		\hline
		DISCO (quad) & $(1+\frac{\sqrt{\kappa}}{n^{.25}}) \log \frac{1}{\epsilon}$ &$Q(1+ \frac{\sqrt{\kappa}}{n^{.25}}) \log \frac{1}{\epsilon}$& \eqref{eqn:regemr}, $\xi_i \overset{iid}{\sim} D$\\
		\hline
		DISCO (non-quad) & $d^{.25} \left( (1+\frac{\sqrt{\kappa}}{n^{.25}}) \log \frac{1}{\epsilon} + \frac{\kappa^{1.5} }{n^{.75}} \right)$ &$Q d^{.25} \left( (1+\frac{\sqrt{\kappa}}{n^{.25}}) \log \frac{1}{\epsilon} + \frac{\kappa^{1.5} }{n^{.75}} \right)$ & \eqref{eqn:regemr}, $\xi_i \overset{iid}{\sim} D$\\
		\hline
		DANE (quad) & $(1+\frac{\kappa^2}{n}) \log \frac{1}{\epsilon}$ &$Q(1+\frac{\kappa^2}{n}) \log \frac{1}{\epsilon}$ & \eqref{eqn:regemr}, $\xi_i \overset{iid}{\sim} D$\\
		\hline
		CoCoA+  &$\kappa \log \frac{1}{\epsilon}$ & $G(n+\sqrt{\kappa n})\kappa \log \frac{1}{\epsilon}$& \eqref{eqn:regemr} \\
		\hline
		Accel Grad& $\sqrt{\kappa_f} \log \frac{1}{\epsilon}$& $Gn\sqrt{\kappa_f} \log \frac{1}{\epsilon}$& \\
		\hline

		
	\end{tabular}
	\caption{\textbf{Rounds and runtime of different distributed optimization algorithms in a general setting.}
Let $G$ be the computation cost of the gradient of $f_i$. For typical problems such as logistic regression, we have $G=O(d)$. Let $Q$ be the cost of solving a linear system. We have $Q = O(d^3)$ if exact matrix inversion is used and have $Q = O(dn\sqrt{\kappa} \log \frac{1}{\epsilon})$ if an $\epsilon$-approximate inverse is found by an accelerated gradient method~\cite{Nesterov04book}. } 
	\label{tab:dist-opt-algs}
\end{table}
\end{small}


The distributed dual coordinate ascent method, including DisDCA~\cite{Yang:13}, CoCoA~\cite{Jaggi14} and CoCoA+~\cite{ma2015adding}, is a class of distributed coordinate optimization algorithms which can be applied to the conjugate dual formulation of~\eqref{eqn:regemr}. In these methods, each machine only updates $n$ dual variables contained in a local problem defined on the $n$ local data points. Any optimization algorithm can be used as a subroutine in each machine as long as it reduces the optimality gap of the local problem by a constant factor. According to~\cite{ma2015adding,Jaggi14}, CoCoA+ requires $O(\kappa\log(1/\epsilon))$ rounds of communication to find an $\epsilon$-optimal solution\footnote{CoCoA+ has a better theoretical performance than CoCoA. According to~\cite{ma2015adding}, CoCoA+ is equivalent to DisDCA with ``practical updates''~\cite{Yang:13} under certain choices of parameters.}. If the accelerated SDCA method~\cite{SSZhang13SDCA,SSZhang13acclSDCA} is used as the subroutine in each machine, the total runtime for CoCoA+ is $O((n+\sqrt{\kappa n})\kappa \log (1/\epsilon))$. Therefore, both DSVRG and DASVRG have lower runtime and communication than CoCoA+, and the other distributed dual coordinate ascent variants.

Assuming the problem \eqref{eqn:obj} has the form of \eqref{eqn:regemr} with $\xi_i$'s i.i.d. sampled from a distribution $D$ (denoted by $\xi_i\overset{iid}{\sim} D$), the DANE \cite{shamir2014communication} and DISCO \cite{zhang2015communication} algorithms require $O((1+\frac{\sqrt{\kappa}}{n^{.25}}) \log (1/\epsilon))$ and $O((1+\frac{\kappa^2}{n})  \log (1/\epsilon))$ rounds of communication, respectively. Hence, DSVRG uses fewer rounds of communication than DANE and fewer than DISCO when $\kappa\leq n^{1.5}$. DASVRG always uses fewer rounds of communication than  DISCO and DANE. Note that, for these four algorithms, the rounds can be very small in the ``big data'' case of large $n$. Indeed, as $n$ increases, all four methods require only $O(\log \frac{1}{\epsilon})$ rounds which is independent of the condition number $\kappa$.

Moreover, DISCO and DANE have large running times due to solving a linear system each round, which is not practical for problems of large dimensionality. As an alternative, Zhang and Xiao~\cite{zhang2015communication} suggest solving the linear system with an inexact solution using another optimization algorithm, but this still has large runtime for ill-conditioned problems. The runtimes of DISCO and DANE are shown in Table~\ref{tab:dist-opt-algs} with $Q = O(d^3)$ to represent the time for taking matrix inverse and $Q = O(dn\sqrt{\kappa} \log \frac{1}{\epsilon})$  to represent the the time when an accelerated first-order method is used for solving the linear system. In both case, the runtimes of DISCO and DANE can be higher than those of DSVRG or DASVRG when $d$ is large. Furthermore, DANE only has the theoretical guarantee mentioned above when it is applied to quadratic problems, for example, regularized linear regression. Also, DISCO only applies to self-concordant functions with easily computed Hessian\footnote{The examples in \cite{zhang2015communication} all take the form of $f_i (x) = g( a_i^Tx)$ for some function $g$ on $\mathbb{R}^1$, which is more specific than $\phi(x,\xi_i)$. Under this form, it is relatively easy to compute the Hessian of $f_i$.}, and makes strong statistical assumptions on the data points.
On the contrary, DSVRG and DASVRG works for a more general problem \eqref{eqn:obj} and do assume  $\xi_i\overset{iid}{\sim} D$ for \eqref{eqn:regemr}.


We also make the connection to the recent lower bounds~\cite{Shamir15} for the rounds of communication needed by distributed optimization. Arjevani and Shamir~\cite{Shamir15} prove that, for a class of $\delta$-related functions (see~\cite{Shamir15} for the definition) and, for a class of algorithms, the rounds of communication achieved by DISCO is optimal. However, as mentioned above, DASVRG needs fewer rounds than DISCO. This is not a contradiction since DASVRG does not fall into the class of algorithms subject to the lower bound in~\cite{Shamir15}. In particular, the algorithms concerned by~\cite{Shamir15} can only use the $n$ local data points from the initial partition to update the local solutions while DASVRG samples and utilizes a second set of data points in each machine in addition to those $n$ data points.

Building on the work of \cite{Shamir15}, we prove a new lower bound showing that any distributed first-order algorithm requires $\tilde O(\sqrt{\frac{\kappa}{n}} \log \frac{1}{\epsilon})$ rounds. This lower bound combined with the convergence analysis of DASVRG shows that DASVRG is optimal in the number of rounds of communication.



In Table \ref{tab:dist-opt-algs-risk}, we compare the rounds and runtime of DSVRG, DASVRG and DISCO in the case where $\kappa = \Theta(\sqrt{N})=\Theta(\sqrt{mn})$, which is a typical setting for ERM problem as justified in \cite{zhang2015communication,shamir2014communication,shamir2014distributed,shalev2009stochastic}. We only compare our methods against DISCO, since it uses the fewest rounds of communication among other related algorithms. Let us consider the case where $n>m$, which is true in almost any reasonable distributed computing scenario. We can see from Table \ref{tab:dist-opt-algs-risk} that the rounds needed by DSVRG is lower than that of DISCO.  In fact, both DSVRG and DASVRG use $O(\log \frac{1}{\epsilon})$ rounds of communication, which is almost a constant for many practical machine learning applications\footnote{Typically $\epsilon \in (10^{-6}, 10^{-2})$, so $\log \frac{1}{\epsilon}$ is always less than $20$. }.
\begin{table}
	\centering
	\begin{tabular}{ |l | c | c | c|}
		\hline
		Algorithm & Rounds & Runtime & Assumptions \\
		\hline
		DSVRG & $(1+\sqrt{\frac{m}{n}}) \log \frac{1}{\epsilon}$ & $G(n+\sqrt{mn}) \log \frac{1}{\epsilon}$   & Assumption \ref{assumption_C} \\
        \hline
		DASVRG & $(1+(\frac{m}{n})^{.25}) \log(1+ \sqrt{\frac{m}{n}}) \log \frac{1}{\epsilon}$& $G (n+ n^{.75} m^{.25} ) \log(1+ \sqrt{\frac{m}{n}}) \log \frac{1}{\epsilon}$& Assumption \ref{assumption_C} \\
		\hline
		DISCO (quad) & $m^{.25} \log \frac{1}{\epsilon}$ &$Q m^{.25} \log \frac{1}{\epsilon}$& \eqref{eqn:regemr}, $\xi_i \overset{iid}{\sim} D$\\
		\hline

		
	\end{tabular}
	\caption{Rounds and runtime of DSVRG, DASVRG and DISCO when $\kappa = \Theta(\sqrt{N})$. The coefficients $Q$ and $G$ are defined as in Table~\ref{tab:dist-opt-algs}.} 
	\label{tab:dist-opt-algs-risk}
\end{table}

\section{Distributed SVRG}
\label{sec:dsvrg}

In this section, we consider a \emph{distributed stochastic variance reduced gradient} (DSVRG) method that is based on a parallelization of SVRG~\cite{JohnsonZhang13,XiaoZhang14,Konecny:15b}. SVRG works in multiple stages and, in each stage, one batch gradient is computed using all $N$ data points and $O(\kappa)$ iterative updates are performed with only one data point processed in each. Our distributed algorithm randomly partitions the $N$ data points onto $m$ machines with $n$ local data points on each to parallelize the computation of the batch gradient in SVRG. Then, we let the $m$ machines conduct the iterative update of SVRG in serial in a ``round-robin'' scheme, namely, let all machine stay idle except one machine that performs a certain steps of iterative updates of SVRG using its local data and pass the solution to the next machine. However, the only caveat in this idea is that the iterative update of SVRG requires an unbiased estimator of $\nabla f(x)$ which can be constructed by sampling over the whole data set. However, the unbiasedness will be lost if each machine can only sample over its local data. To address this issue, we use the remaining $\tilde n=C-n$ memory space of each machine to store a second set of data which is uniformly sampled from the whole data set before the algorithm starts. If each machine samples over this dataset, the unbiased estimator will be still available so that the convergence property can be inherited from the single-machine SVRG.

To load the second dataset mentioned above onto each machine, we design an efficient data allocation scheme which reuses the randomness of the first partitioned dataset to construct this second one. We show that this method helps to increase the overlap between the first and second dataset so that it requires smaller amount of communication than the direct implementation.

\subsection{An efficient data allocation procedure} \label{subsec:dataallocationplus}
To facilitate the presentation, we define a \emph{multi-set} as a collection of items where some items can be repeated. We allow taking the union of a regular set $S$ and a multi-set $R$, which is defined as a regular set $S\cup R$ consisting of the item in either $S$ or $R$ without repetition.

We assume that a random partition $S_1,\dots,S_m$ of $[N]$ can be constructed efficiently. A straightforward data allocation procedure is to prepare a partition $S_1,\dots,S_m$ of $[N]$ and then sample a sequence of $Q$ i.i.d. indices $r_1,\dots,r_Q$ uniformly with replacement from $[N]$. After partitioning $r_1,\dots,r_Q$ into $m$ multi-sets $R_1,\dots, R_m\subset[N]$, we allocate data $\{f_i\mid i\in S_j\cup R_j\}$ to machine $j$. Since $S_1,\dots,S_m$ has occupied $n$ of the memory in each machine, $Q$ can be at most $\tilde n m$. Note that the amount of communication in distributing $S_1,\dots,S_m$ is exactly $N$ which is necessary for almost all distributed algorithms. However, this straightforward procedure requires an extra $O(Q)$ amount of communication for distributing $R_j\backslash S_j$.


To improve the efficiency of data allocation, we propose a procedure which reuses the randomness of $S_1,\dots,S_m$ to generate the indices $r_1,\dots,r_Q$ so that the the overlap between $S_j$ and $R_j$ can be increased which helps reduce the additional amount of communication for distributing $R_1,\dots, R_m$. The key observation is that the concatenation of $S_1,\dots,S_m$ is a random permutation of $[N]$ which has already provided enough randomness needed by $R_1,\dots, R_m$. Hence, it will be easy to build the i.i.d. samples $r_1,\dots,r_Q$ by adding a little additional randomness on top of $S_1,\dots,S_m$. With this observation in mind, we propose our data allocation procedure in Algorithm \ref{alg:prepare}.

\setlength{\textfloatsep}{6pt}
\begin{algorithm}\caption{Data Allocation : $\dd(N,m,Q)$}\label{alg:prepare}
	\begin{algorithmic}[1]
		\REQUIRE Index set $[N]$, the number of machines $m$, and the length of target sequence $Q$.
		\ENSURE A random partition $S_1,\dots,S_m$ of $[N]$, indices $r_1,\dots,r_Q\in[N]$, multi-sets $R_1,\dots, R_m\subset [N]$, and data $\{f_i\mid i\in S_j\cup R_j\}$ stored on machine $j$ for each $j\in [m]$. \\
\vspace{0.1in}
		\noindent\textbf{Center samples $r_1,\dots,r_Q$ and $R_1,\dots,R_m$ as follows:}
				\vspace{0.1in}
            \STATE Randomly partition $[N]$ into $m$ disjoint sets $S_1,\dots,S_m$ of the same size $n=\frac{N}{m}$.
			\STATE Concatenate the subsets $S_1,\dots,S_m$ into a random permutation  $i_1,\dots,i_N$ of $[N]$ so that $S_j=\{i_{(j-1)n+1},\dots,i_{jn}\}$.
			\FOR{$\ell=1$ to $Q$}
				\STATE Let
        $$
        r_\ell=\left\{
        \begin{array}{ll}
        i_\ell&\text{ with probability }1-\frac{\ell-1}{N}\\
        i_{\ell'}&\text{ with probability }\frac{1}{N}\text{ for }\ell'=1,2,\dots,\ell-1.
        \end{array}\right.
        $$
			\ENDFOR
			\STATE Let \begin{eqnarray}
        \label{Randr}
        R_j=\left\{
        \begin{array}{ll}
        \{r_{(j-1)\tilde n+1},\dots,r_{j\tilde n}\}&\text{ if }j=1,\dots,\lceil Q/\tilde n\rceil-1\\
        \{r_{(\lceil Q/\tilde n\rceil-1)\tilde n+1},\dots,r_{Q}\}&\text{ if }j=\lceil Q/\tilde n\rceil\\
        \emptyset&\text{ if }j=\lceil Q/\tilde n\rceil+1,\dots,m.
        \end{array}\right.
        \end{eqnarray}\\
        		\vspace{0.1in}
			\textbf{Distribute data points to machines: } \\
			\vspace{0.1in}
			\STATE Machine $j$ acquires data points in $\{f_i|i\in S_j\cup R_j\}$ from the storage center.
	\end{algorithmic}
\end{algorithm}

The correctness and the expected amount of communication of Algorithm \ref{alg:prepare} are characterized as follows.
\begin{lemma}\label{lem:shuffling}
	The sequence $r_1,\dots,r_Q$ generated in Algorithm~\ref{alg:prepare} has the same joint distribution as a sequence of i.i.d. indices uniformly sampled with replacement from $[N]$.
Moreover, the expected amount of communication for distributing $\cup_{i=1}^m\{f_i|i\in R_j\backslash S_j\}$
is at most $\frac{Q^2}{N}$. 
\end{lemma}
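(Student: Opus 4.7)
The plan is to prove the two claims separately.

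For the distributional claim, I would proceed by induction on $\ell$, showing that $r_\ell$ is conditionally uniform on $[N]$ given the history $(i_1,\dots,i_{\ell-1}, r_1,\dots, r_{\ell-1})$. Conditioned on $(i_1,\dots,i_{\ell-1})$, the next permutation element $i_\ell$ is uniform on the $N-\ell+1$ unseen labels $[N]\setminus\{i_1,\dots,i_{\ell-1}\}$, so the ``fresh'' branch of Line 4 of Algorithm~\ref{alg:prepare} assigns conditional mass $\frac{N-\ell+1}{N}\cdot\frac{1}{N-\ell+1}=\frac{1}{N}$ to each unseen label. The ``rediscover'' branch independently assigns $\frac{1}{N}$ to each of the $\ell-1$ seen labels $i_1,\dots,i_{\ell-1}$. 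Aggregating, every $k\in[N]$ receives conditional probability exactly $\frac{1}{N}$, so the inductive step closes and i.i.d.\ uniformity follows.

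For the communication bound, I would first reduce to a single sum over $\ell$. Writing $j(\ell)=\lceil \ell/\tilde n\rceil$ for the machine that is assigned the index $r_\ell$, the bound $|R_j\setminus S_j|\leq \#\{\ell\in((j-1)\tilde n, j\tilde n]:r_\ell\notin S_j\}$ gives
\[
  \mathbb{E}\Bigl[\,\sum_{j=1}^m|R_j\setminus S_j|\,\Bigr]\;\leq\; \sum_{\ell=1}^Q \Pr[r_\ell \notin S_{j(\ell)}].
\]
The next step is a direct computation of $\Pr[r_\ell\in S_{j(\ell)}]$ from Line 4 and the block structure $S_j=\{i_{(j-1)n+1},\dots,i_{jn}\}$: the fresh branch contributes $\frac{N-\ell+1}{N}\mathbf{1}[\ell\in((j-1)n,jn]]$, and each rediscover branch contributes $\frac{1}{N}$ for every $\ell'<\ell$ with $\ell'\in((j-1)n,jn]$. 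In the ``aligned'' regime $\ell\in((j(\ell)-1)n, j(\ell)n]$ these sum to $1-\Pr[r_\ell\in S_{j(\ell)}]=(j(\ell)-1)n/N$. The final step is to sum block-by-block across the $K=\lceil Q/\tilde n\rceil$ nonempty machines, obtaining at most $\tilde n\cdot(j-1)n/N$ from block $j$; this telescopes to $\frac{\tilde n\, n}{N}\cdot\frac{K(K-1)}{2}\leq \frac{Q^2}{N}$.

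I expect the main obstacle to be the case analysis in the second claim: the rediscover mass landing in $S_{j(\ell)}$ may come from seen labels whose original block indices differ from $j(\ell)$, so one has to verify that the aligned estimate above controls the total and that boundary terms between the block sizes $n$ and the window sizes $\tilde n$ can be absorbed within a constant factor of $Q^2/N$.
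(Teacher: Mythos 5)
Your argument for the first claim is correct and coincides with the paper's: conditioned on the history, $i_\ell$ is uniform on the $N-\ell+1$ unseen labels, so the two branches of the sampling rule in Algorithm~\ref{alg:prepare} together give every label conditional mass exactly $\frac{1}{N}$.

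For the second claim your setup is right, and the obstacle you flag at the end is exactly where the difficulty sits --- but it is a genuine gap, not a boundary effect that can be absorbed. When $\lceil\ell/\tilde n\rceil\neq\lceil\ell/n\rceil$, the dominant ``fresh'' branch $r_\ell=i_\ell$ places $r_\ell$ in $S_{\lceil\ell/n\rceil}$, which is disjoint from $S_{j(\ell)}$; only the rediscover branch can hit $S_{j(\ell)}$, and it does so with probability at most $n/N=1/m$. Hence $\Pr[r_\ell\notin S_{j(\ell)}]\geq 1-1/m$ for every misaligned $\ell$, and each such index contributes $\Theta(1)$ rather than $O(\ell/N)$. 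For instance, with $\tilde n=n/2$ and $Q=n$, every $\ell\in(n/2,n]$ has $r_\ell\in\{i_1,\dots,i_\ell\}\subseteq S_1$ with probability one while $r_\ell$ is assigned to machine $2$, so $\mathbb{E}\left|R_2\setminus S_2\right|\geq \frac{n}{2}\left(1-\frac{n}{N}\right)$, which far exceeds $Q^2/N=n/m$ for moderately large $m$. Your aligned formula $\Pr[r_\ell\notin S_{j(\ell)}]=(j(\ell)-1)n/N$ is therefore not an upper bound off the aligned set, and no constant-factor patch recovers $Q^2/N$. Note that the paper's own proof is the aligned argument in disguise: it asserts that $f_{r_\ell}$ must be shipped separately ``to machine $j$'' where $i_\ell\in S_j$ only when $r_\ell\neq i_\ell$, thereby silently identifying the machine owning $i_\ell$ (blocks of length $n$) with the machine receiving $r_\ell$ (blocks of length $\tilde n$), and then sums $\Pr[r_\ell\neq i_\ell]=\frac{\ell-1}{N}$ to get $\frac{Q(Q-1)}{2N}\leq\frac{Q^2}{N}$. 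Both that argument and your block-by-block computation (which recovers a slightly sharper constant) are complete only when the two block structures coincide, e.g.\ when $\tilde n=n$ or $Q\leq\min(n,\tilde n)$; as written, neither closes the case $\tilde n\neq n$.
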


\begin{proof}
	Conditioned on $i_1,\dots,i_{\ell-1}$ and $r_1,\dots,r_{\ell-1}$, the random index $i_{\ell}$ has uniform distribution over $[N]\setminus\{i_1,\dots,i_{\ell-1}\}$. Therefore, by Line 3 in Algorithm \ref{alg:prepare}, the conditional distribution of the random index $r_{\ell}$, conditioning on $i_1,\dots,i_{\ell-1}$ and $r_1,\dots,r_{\ell-1}$, is a uniform distribution over $[N]$. Hence, we complete the proof of the first claim of the lemma.
	
	To analyze the amount of communication, we note that $i_{\ell}\neq r_{\ell}$ with probability $\frac{\ell-1}{N}$. Suppose $i_{\ell}\in S_j$ for some $j$. We know that the data point $f_{r_{\ell}}$ needs to be transmitted to machine $j$ separately from $S_j$ only if $i_{\ell}\neq r_{\ell}$. Therefore, the expected amount of communication for distributing $\cup_{i=1}^m\{f_i|i\in R_j\backslash S_j\}$ is upper bounded by $\sum_{\ell=1}^{Q}\frac{\ell-1}{N} \le \frac{Q^2}{N}$.
\qed
\end{proof}

According to Lemma~\ref{lem:shuffling}, besides the (necessary) $N$ amount of communication to distribute $S_1,\dots,S_m$,  Algorithm \ref{alg:prepare} needs only $\frac{Q^2}{N}$ additional amount of communication to distribute $R_1,\dots,R_m$ thanks to the overlaps between $S_j$ and $R_j$ for each $j$. This additional amount is less than the $O(Q)$ amount required by the straightforward method when $Q\leq N$.

Note that, in the DSVRG algorithm we will introduce later, we need $Q=O(\kappa\log(1/\epsilon))$.  When $\kappa=\Theta(\sqrt{N})$ in a typical ERM problem, we have $\frac{Q^2}{N}=\frac{\kappa^2(\log(1/\epsilon))^2}{N}=O((\log(1/\epsilon))^2)$ which is typically much less than the $N$ amount of communication in distributing $S_1,\dots,S_m$. In other words, although DSVRG does require additional amount of communication to allocate the data than other algorithms, this additional amount is nearly negligible.

%

\subsection{DSVRG algorithm and its theoretical guarantees}\label{subsec:alg}

With the data $\{f_i\mid i\in S_j\cup R_j\}$ stored on machine $j$ for each $j\in [m]$ after running Algorithm~\ref{alg:prepare}, we are ready to present the distributed SVRG algorithm in Algorithm~\ref{alg:serialrr} and Algorithm~\ref{alg:ssvrg}.



We start SVRG in machine $k$ with $k=1$ initially at an initial solution $\tx^{0}\in\mathbb{R}^d$. At the beginning of stage $\ell$ of SVRG, all $m$ machines participate in computing a batch gradient $h^\ell$ in parallel using the data indexed by $S_1,\dots,S_m$. Within stage $\ell$, in each iteration, machine $k$ samples one data $f_i$ from its local data indexed by $R_k$ to construct a stochastic gradient $\nabla f_i(x_t)-\nabla f_i(\tx^{\ell})+h^{\ell}$ and performs the iterative update. Since $R_k$ is a multi-set that consists of indices sampled with replacement from $[N]$, the unbiasedness of $\nabla f_i(x_t)-\nabla f_i(\tx^{\ell})+h^{\ell}$, i.e., the property
\begin{eqnarray}
\label{eq:unbias}
\mathbb{E}_{i\sim R_k}[f_i(x_t)-\nabla f_i(\tx^{\ell})+h^{\ell}]=\mathbb{E}_{i\sim N}[f_i(x_t)-\nabla f_i(\tx^{\ell})+h^{\ell}]=\nabla f(x_t)
\end{eqnarray}
is guaranteed. After this iteration, $i$ is removed from $R_k$.

The $m$ machines do the iterative updates in the order from machine 1 to machine $m$. Once the current active machine, says machine $k$, has removed all of its samples in $R_k$ (so that $R_k=\emptyset$), then it must pass the current solution and the running average of all solutions generated in the current stage to machine $k+1$. At any time during the algorithm, there is only one machine updating the solution $x_t$ and the other $m-1$ machines only contribute in computing the batch gradient $h^\ell$. We want to emphasis that it is important that machines should never use any samples in $R_j$'s more than once since, otherwise, the stochastic gradient $\nabla f_i(x_t)-\nabla f_i(\tx^\ell)+h^{\ell}$  will lose its unbiasedness.  We describe formally each stage of this algorithm in Algorithm~\ref{alg:serialrr} and the iterative update in Algorithm~\ref{alg:ssvrg}.


\setlength{\textfloatsep}{6pt}

\begin{algorithm}
	\caption{Distributed SVRG (DSVRG)}
	\label{alg:serialrr}
	\begin{algorithmic}[1]
		\REQUIRE An initial solution $\tx^{0}\in\mathbb{R}^d$, data $\{f_i\}_{i=1,\dots,N}$, the number of machine $m$, a step length $\eta<\frac{1}{4L}$, the number of iterations $T$ in each stage, the number of stages $K$, and a sample size $Q=TK$.
		\ENSURE $\tx^{K}$
		\STATE Use $\dd$ to generate (a) a random partition $S_1,\dots,S_m$ of $[N]$, (b) $Q$ i.i.d. indices $r_1,\dots,r_Q$ uniformly sampled with replacement form $[N]$, (c) $m$ multi-sets $\mathcal{R}=\{R_1,\dots, R_m\}$ defined as \eqref{Randr}, and (d) data $\{f_i\mid i\in S_j\cup R_j\}$ stored on machine $j$.
		\STATE $k\leftarrow 1$
		\FOR{$\ell=0,1,2,\ldots,K-1$}
		\STATE Center sends $\tx^{\ell}$ to each machine
		\FOR{machine $j=1,2,\dots,m$ \textbf{in parallel}}
		\STATE Compute $h_j^{\ell}=\sum_{i\in S_j}\nabla f_i(\tx^{\ell})$ and send it to center
		\ENDFOR
		\STATE Center computes $h^{\ell}=\frac{1}{N}\sum_{j=1}^mh_j^{\ell}$ and send it to machine $k$
		\STATE $(\tx^{\ell+1},\mathcal{R},k)\leftarrow\text{SS-SVRG}(\tx^{\ell},h,\mathcal{R},k,\eta,T)$
		\ENDFOR
	\end{algorithmic}
\end{algorithm}

\begin{algorithm}
\caption{Single-Stage SVRG: $\text{SS-SVRG}(\tx,h,\mathcal{R},k,\eta,T, \{f_i\}_{i=1,\dots,N})$}
\label{alg:ssvrg}
\begin{algorithmic}[1]
\REQUIRE A solution $\tx\in\mathbb{R}^d$,  data $\{f_i\}_{i=1,\dots,N}$, a batch gradient $h$, $m$ multi-sets $\mathcal{R}=\{R_1,R_2,\dots,R_m\}$,
the index of the active machine $k$, a step length $\eta<\frac{1}{4L}$, and the number of iterations $T$.
\ENSURE The average solution $\bx_T$, the updated multi-sets $\mathcal{R}=\{R_1,R_2,\dots,R_m\}$, and the updated index of active machine  $k$.
\STATE $x_0=\tx$ and $\bx_0=\mathbf{0}$
\FOR{$t=0,1,2,\ldots,T-1$}
\STATE Machine $k$ samples an instance $i$ from $R_k$ and computes
\begin{eqnarray*}
\label{eq:svrgstep}
x_{t+1}=x_t-\eta\left(\nabla f_i(x_t)-\nabla f_i(\tx)+h\right),\quad
\bx_{t+1}=\frac{x_{t+1}+t\bx_t}{t+1},\quad
R_k\leftarrow R_k\backslash\{i\}
\end{eqnarray*}
\IF{$R_k=\emptyset$}
\STATE $x_{t+1}$ and $\bx_{t+1}$ are sent to machine $k+1$
\STATE $k\leftarrow k+1$
\ENDIF
\ENDFOR
\end{algorithmic}
\end{algorithm}


Note that an implicit requirement of Algorithm \ref{alg:serialrr} is
$TK=Q\leq\tilde n m$.
Because one element in $R_k$ is removed in each \emph{iterative update} (Line 3) of Algorithm~\ref{alg:ssvrg}, this update cannot be performed any longer once each $R_k$ becomes empty. Hence, the condition $TK=Q$ ensures that the number of iterative updates matches the total number of indices contained in $R_1,R_2,\dots,R_m$. Note that, in the worse case, $S_j$ and $R_j$ may not overlap so that the size of data $\{f_i\mid i\in S_j\cup R_j\}$ stored on machine $j$ can be $|S_j|+|R_j|$. That is why the size of $R_j$ is only $\tilde n=C-n$. The condition $TK=Q\leq\tilde n m$ is needed here so the total amount of data stored in all machines do not exceed the total capacity $Cm$. The convergence of Algorithm~\ref{alg:serialrr} is established by the following theorem.

\begin{theorem}
\label{thm:svrgrr2}
Suppose $0<\eta<\frac{1}{4L}$ and $TK=Q\leq\tilde n m$.
 Algorithm~\ref{alg:serialrr} guarantees 
 
\begin{eqnarray}
\label{eq:svrg_conv}
\mathbb{E}\left[f(\tx^{K})-f(x^*)\right]
\leq\left(\frac{1}{\mu\eta(1-4L\eta)T}+\frac{4L\eta(T+1)}{(1-4L\eta)T}\right)^{K}\left[f(\tx^0)-f(x^*)\right].
\end{eqnarray}
\normalsize
In particular, when $\eta=\frac{1}{16L}$, $T=96\kappa$ and $TK= Q\leq\tilde n m$, Algorithm~\ref{alg:serialrr}
needs
$
K= \frac{1}{\log(9/8)}\log\left(\frac{f(\tilde x^0) -f(x^*)}{\epsilon}\right)
$
stages to find an $\epsilon$-optimal solution.
\end{theorem}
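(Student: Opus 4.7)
The plan is to reduce the analysis to the single-machine SVRG convergence bound of Xiao--Zhang by showing that DSVRG is statistically equivalent to running SVRG on a single machine with i.i.d.\ sampling. The crucial fact is Lemma~\ref{lem:shuffling}, which guarantees that the concatenated sequence $r_1,\ldots,r_Q$ (and hence the samples consumed sequentially by the round-robin across $m$ machines and $K$ stages) has the same joint distribution as $Q$ indices sampled i.i.d.\ uniformly with replacement from $[N]$. The hypothesis $TK = Q \leq \tilde n m$ ensures the multi-sets are never exhausted, so each of the $TK$ iterative updates in Algorithm~\ref{alg:serialrr} consumes exactly one fresh index from the pool.

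First, I would fix a stage $\ell$ and view the inner loop of Algorithm~\ref{alg:ssvrg} as generating a trajectory $x_0, x_1, \ldots, x_T$ via the update $x_{t+1} = x_t - \eta(\nabla f_{i_t}(x_t) - \nabla f_{i_t}(\tx^\ell) + h^\ell)$, where $i_t$ is the index removed at the $t$-th iteration from whichever machine is currently active. Since each $R_k$ is populated from the i.i.d.\ uniform sequence produced by $\dd$ and each sample is used at most once, the indices $i_0, i_1, \ldots, i_{T-1}$ in stage $\ell$ are i.i.d.\ uniform over $[N]$ and independent of $\tx^\ell$ (which depends only on earlier stages' indices). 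Moreover, $h^\ell = \frac{1}{N}\sum_{j=1}^m h_j^\ell = \nabla f(\tx^\ell)$ exactly, so the stochastic gradient $\nabla f_{i_t}(x_t) - \nabla f_{i_t}(\tx^\ell) + h^\ell$ is an unbiased estimate of $\nabla f(x_t)$ in the same sense as \eqref{eq:unbias}. Consequently, each stage of DSVRG is distributionally identical to one stage of single-machine SVRG initialized at $\tx^\ell$ with step size $\eta$ and inner-loop length $T$, and the reference point update $\tx^{\ell+1} = \bx_T$ matches the option-II averaging scheme of Xiao--Zhang.

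Given this equivalence, \eqref{eq:svrg_conv} follows directly from Theorem~1 of~\cite{XiaoZhang14} applied stage-by-stage: a single stage contracts the expected suboptimality by a factor of $\rho(\eta,T) \equiv \frac{1}{\mu\eta(1-4L\eta)T} + \frac{4L\eta(T+1)}{(1-4L\eta)T}$, and iterating this $K$ times yields the stated bound by the tower property.

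For the second part, I would substitute $\eta = \frac{1}{16L}$ and $T = 96\kappa$ into $\rho(\eta,T)$. Then $4L\eta = \tfrac{1}{4}$, $1 - 4L\eta = \tfrac{3}{4}$, and $\mu\eta T = \tfrac{T}{16\kappa} = 6$, giving $\frac{1}{\mu\eta(1-4L\eta)T} = \tfrac{2}{9}$. For the second term, I would use the crude bound $\tfrac{4L\eta(T+1)}{(1-4L\eta)T} \leq \tfrac{8L\eta}{1-4L\eta} = \tfrac{2}{3}$, summing to $\rho \leq \tfrac{8}{9}$. Requiring $\bigl(\tfrac{8}{9}\bigr)^{K}[f(\tx^0)-f(x^*)] \leq \epsilon$ and taking logs gives the stated bound on $K$. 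The main obstacle is not a computation but the conceptual reduction in the first step — verifying that the cross-machine, cross-stage sampling mechanism truly produces the correct i.i.d.\ distribution and independence structure — but Lemma~\ref{lem:shuffling} and the ``remove after use'' discipline of Algorithm~\ref{alg:ssvrg} make this rigorous.
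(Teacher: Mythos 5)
Your proposal is correct and takes essentially the same route as the paper: reduce to single-machine SVRG via the distributional equivalence of the sampling sequence guaranteed by Lemma~\ref{lem:shuffling} (and the use-once discipline), invoke Theorem~1 of~\cite{XiaoZhang14}, and then bound the contraction factor by $\frac{2}{9}+\frac{2}{3}=\frac{8}{9}$ for $\eta=\frac{1}{16L}$, $T=96\kappa$. Your numerical verification matches the paper's computation exactly, and your added remarks on the independence of the stage-$\ell$ indices from $\tx^{\ell}$ only make the reduction more explicit than the paper's own wording.
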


\begin{proof}
In the iterative update given in Line 3 of Algorithm~\ref{alg:ssvrg}, a stochastic gradient $\nabla f_i(x_t)-\nabla f_i(\tx)+h$ is constructed with $h$ being the batch gradient $\nabla f(\tx)$ and $i$ sampled from $R_k$ in the active machine $k$. Since $i$ is one of the indices $r_1,\dots,r_Q$, each of which is sampled uniformly from $[N]$, this stochastic gradient is unbiased estimator of $\nabla f(x_t)$. Therefore, the path of solutions $\tx^0,\tx^1,\tx^2,\dots$ generated by Algorithm~\ref{alg:serialrr} has the same distribution as the ones generated by single-machine SVRG so that the convergence result for the single-machine SVRG can be directly applied to Algorithm~\ref{alg:serialrr}. 
The inequality \eqref{eq:svrg_conv} has been shown in Theorem 1 in \cite{XiaoZhang14} for single-machine SVRG, which now also holds for Algorithm~\ref{alg:serialrr}.

When $\eta=\frac{1}{16L}$ and $T=96\kappa$, it is easy to show that

\begin{eqnarray}
\label{eq:8over9}
\frac{1}{\mu\eta(1-4L\eta)T}+\frac{4L\eta(T+1)}{(1-4L\eta)T}
\leq\frac{1}{\mu\eta(1-4L\eta)T}+\frac{8L\eta}{(1-4L\eta)}
=\frac{2}{9}+\frac{2}{3}=\frac{8}{9}
\end{eqnarray}
\normalsize
so that Algorithm~\ref{alg:serialrr}
needs
$
K= \frac{1}{\log(9/8)}\log\left(\frac{f(\tilde x^0) -f(x^*)}{\epsilon}\right)
$
stages to find an $\epsilon$-optimal solution.
\qed
\end{proof}

By Theorem~\ref{thm:svrgrr2}, DSVRG can find an $\epsilon$-optimal solution for~\eqref{eqn:obj} after $K=O(\log(1/\epsilon))$ stages with $T=O(\kappa)$ iterative updates (Line 3 of Algorithm~\ref{alg:ssvrg}) in each stage.
Therefore, there are $O(\kappa \log(1/\epsilon))$ iterative updates in total so that $Q$ must be $TK=O(\kappa \log(1/\epsilon))$.
Since the available memory space requires $Q\leq\tilde n m=(C-n)m$. We will need at least $C=\Omega(n+\frac{\kappa}{m} \log(1/\epsilon))$ in order to implement DSVRG.

Under the assumptions of Theorem~\ref{thm:svrgrr2}, including Assumption~\ref{assumption_C} (so $\tilde n=\Omega(n)$), we discuss the theoretical performance of DSVRG as follows.
\begin{itemize}
\item \textbf{Local parallel runtime}: Since one gradient is computed in each iterative update and $n$ gradients are computed in parallel to construct the batch gradient, the total local parallel runtime for DSVRG to find an $\epsilon$-optimal solution is $O((n+T )K)=O((n+\kappa) \log(1/\epsilon))$
\item \textbf{Communication}: There is a fixed amount of $O(N)$ communication needed to distribute the partitioned data $S_1,\dots,S_m$ to $m$ machines. When Algorithm \ref{alg:prepare} is used to generate $R_1,\dots,R_m$, the additional amount of communication to complete the data allocation step of DSVRG is $O(Q^2/N) = O(\kappa^2\log^2(1/\epsilon)/N)$ in expectation according to Lemma \ref{lem:shuffling}. During the algorithm, the batch gradient computations and the iterative updates together require $O((m+\frac{T}{\tilde n})K)=O((m+\frac{\kappa}{n})\log(1/\epsilon))$ amount of communication.
\item \textbf{Rounds of communication}: Since DSVRG needs one round of communication to compute a batch gradient at each stage (one call of $\text{SS-SVRG}$) and one round after every $\tilde n$ iterative update (when $R_k=\emptyset$), it needs $O(K+\frac{TK}{\tilde n})=O((1+\frac{\kappa}{n})\log(1/\epsilon))$ rounds of communication in total to find an $\epsilon$-optimal solution.  We note that, if the memory space $C$ in each machine is large enough, the value of $\tilde n$ can larger than $\kappa$ so that the rounds of communication needed will be only $O(\log(1/\epsilon))$.
\end{itemize}



\subsection{Regimes where DSVRG is Optimal}
\label{sec:DSVRGopt}

In this subsection, we consider a scenario where $\kappa =\Theta( n^{1-2\delta})$ with a constant $0<\delta<\frac{1}{2}$ and $\epsilon =\frac{1}{n^s}$ with positive constant $s$. We show that with a different choices of $\eta$, $T$ and $K$, DSVRG can find an $\epsilon$-optimal solution solution for~\eqref{eqn:obj} with the optimal parallel runtime, the optimal amount of communication, and the optimal number of rounds of communication simultaneously.
\begin{proposition}
\label{thm:constantrounds} Suppose $\kappa\leq \frac{n^{1-2\delta}}{32} $ with a constant $0<\delta<\frac{1}{2}$ and we choose $\eta=\frac{1}{16n^\delta L}$, $T=n$ and $K= \frac{1}{\log(n^\delta/2)}\log\left(\frac{f(\tilde x^0) -f(x^*)}{\epsilon}\right)$ in Algorithm~\ref{alg:serialrr}. Also, suppose Assumption~\ref{assumption_C} holds and $TK=Q\leq\tilde n m$. Algorithm~\ref{alg:serialrr} finds $\epsilon$-optimal solution for~\eqref{eqn:obj} with $O(\frac{\log(1/\epsilon)}{\delta \log n})$ rounds of communications, $O(\frac{n\log(1/\epsilon)}{\delta \log n})$ total parallel runtime, and $O(\frac{m\log(1/\epsilon)}{\delta \log n})$ amount of communication.

In particular, when $\epsilon =\frac{1}{n^s}$ with a positive constant $s$, Algorithm~\ref{alg:serialrr} finds $\epsilon$-optimal solution for~\eqref{eqn:obj} with $O(1)$ rounds of communications, $O(n)$ total parallel runtime, and $O(m)$ amount of communication.
\end{proposition}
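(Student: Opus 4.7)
The plan is to apply Theorem~\ref{thm:svrgrr2} with the sharper tuning $\eta=\frac{1}{16 n^\delta L}$ and $T=n$, so that the per-stage contraction factor drops from the constant $8/9$ of the default tuning to $O(1/n^\delta)$. A correspondingly smaller number of stages $K=O(\log(1/\epsilon)/(\delta \log n))$ will then suffice, and the remaining work is to re-count rounds, runtime, and communication per stage with the new $T$ and $K$.

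First I would verify the step-size restriction $\eta < 1/(4L)$, which holds since $n^\delta \geq 1$. Next I would substitute the new $\eta$ and $T$ into the contraction factor $\rho := \frac{1}{\mu\eta(1-4L\eta)T}+\frac{4L\eta(T+1)}{(1-4L\eta)T}$ appearing in \eqref{eq:svrg_conv}. Using $4L\eta = 1/(4n^\delta) \leq 1/4$ (so $1-4L\eta \geq 3/4$), $(T+1)/T \leq 2$, and the hypothesis $\kappa = L/\mu \leq n^{1-2\delta}/32$, a short calculation analogous to \eqref{eq:8over9} bounds each of the two summands by $\frac{2}{3n^\delta}$, giving $\rho \leq \frac{4}{3n^\delta} \leq \frac{2}{n^\delta}$. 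Iterating \eqref{eq:svrg_conv} yields $\mathbb{E}[f(\tx^K)-f(x^*)] \leq \rho^K [f(\tx^0)-f(x^*)]$, so the stated choice $K = \log((f(\tx^0)-f(x^*))/\epsilon)/\log(n^\delta/2)$ produces an $\epsilon$-optimal solution; since $\log(n^\delta/2)=\delta \log n - \log 2 = \Theta(\delta \log n)$, this gives $K = O(\log(1/\epsilon)/(\delta \log n))$.

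For the resource counts I would follow the same bookkeeping that gave the theoretical guarantees of DSVRG listed after Theorem~\ref{thm:svrgrr2}, but with the new $T$ and $K$. Each stage performs one batch-gradient aggregation ($O(1)$ round, $n$ parallel gradient evaluations, $O(m)$ vectors communicated) followed by $T=n$ serial iterative updates distributed across active machines. Assumption~\ref{assumption_C} guarantees $|R_k| \geq \tilde n = \Omega(n)$ fresh samples per machine, so the $T=n$ updates trigger only $O(T/\tilde n)=O(1)$ hand-offs per stage, contributing $O(1)$ extra rounds and $O(1)$ extra vectors of communication. Multiplying by $K$ yields $O(K)$ rounds, $O(nK)$ parallel runtime, and $O(mK)$ communication, which become the stated bounds after substituting $K = O(\log(1/\epsilon)/(\delta \log n))$. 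The particular case $\epsilon = 1/n^s$ then gives $\log(1/\epsilon) = s\log n$, hence $K = O(s/\delta) = O(1)$, which immediately yields the $O(1)$ rounds, $O(n)$ runtime, and $O(m)$ communication claim.

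The only nontrivial step is the bound $\rho \leq 2/n^\delta$: it is where the three ingredients---shorter inner loop $T=n$ in place of the default $T=96\kappa$, slower step $\eta = \Theta(1/(n^\delta L))$, and the assumed gap $\kappa \leq n^{1-2\delta}/32$---must be balanced so that the strong-convexity term ($\approx 16 n^\delta \kappa/n$) and the variance term ($\approx 4L\eta = 1/(4n^\delta)$) are both $\Theta(1/n^\delta)$. Everything else is routine bookkeeping already carried out in the analysis of DSVRG.
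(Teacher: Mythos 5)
Your proposal is correct and follows essentially the same route as the paper's proof: it invokes the contraction inequality \eqref{eq:svrg_conv} from Theorem~\ref{thm:svrgrr2} with the new parameters, uses $\kappa\leq n^{1-2\delta}/32$ to bound the per-stage factor by $2/n^{\delta}$ (your split into two summands each at most $\tfrac{2}{3n^{\delta}}$ matches the paper's computation up to trivial rearrangement), and then redoes the same resource bookkeeping with $T=n$ and the smaller $K$. No gaps.
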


\begin{proof}
Since  $\eta=\frac{1}{16n^\delta L}<\frac{1}{4L}$, Algorithm~\ref{alg:serialrr} guarantees \eqref{eq:svrg_conv} according to  Theorem \ref{thm:svrgrr2}. With $T=n$ and $\eta=\frac{1}{16n^\delta L}$, we have

\begin{eqnarray*}
\frac{1}{\mu\eta(1-4L\eta)T}+\frac{4L\eta(T+1)}{(1-4L\eta)T}
&\leq&\frac{1}{\mu\eta(1-4L\eta)T}+\frac{8L\eta}{(1-4L\eta)}
=\frac{16n^\delta\kappa}{(1-1/(4n^\delta))n}+\frac{1}{2n^\delta(1-1/(4n^\delta))}\\
&\leq&\frac{1}{2n^\delta(1-1/(4n^\delta))}+\frac{1}{2n^\delta(1-1/(4n^\delta))}\leq\frac{2}{n^\delta}\\
\end{eqnarray*}
\normalsize
Hence, $\mathbb{E}\left[f(\tx^{K})-f(x^*)\right]\leq\epsilon$ can be implied by the inequality \eqref{eq:svrg_conv} as $K= \frac{1}{\log(n^\delta/2)}\log\left(\frac{f(\tilde x^0) -f(x^*)}{\epsilon}\right)$.

Under Assumption~\ref{assumption_C}, we have $\tilde n=\Omega(n)$ so that
the number of rounds of communication  Algorithm~\ref{alg:serialrr} needs is
$O(K+\frac{TK}{\tilde n})\leq O(K+\frac{TK}{n})=O(\frac{\log(1/\epsilon)}{\delta \log n})$. Moreover, the total parallel needed is $O((n+T)K)=O(\frac{n\log(1/\epsilon)}{\delta \log n})$ and the amount of communication needed is
$O((m+\frac{T}{\tilde n})K)=O(\frac{m\log(1/\epsilon)}{\delta \log n})$.
The second conclusion can be easily derived by replacing $\epsilon$ with $O(\frac{1}{n^s})$.
\qed
\end{proof}

The justification for the scenario where $\kappa =\Theta( n^{1-2\delta})$ and $\epsilon =\frac{1}{n^s}$ and why these performance guarantees are optimal have been discussed in Section~\ref{summary}.


\section{Accelerated Distributed SVRG}\label{sec:dppasvrg}
In this section, we use the generic acceleration techniques in~\cite{frostigicml15} and~\cite{lin2015universal} to further improve the theoretical performance of DSVRG and obtain a \emph{distributed accelerated stochastic variance reduced gradient} (DASVRG) method. 

\subsection{DASVRG algorithm and its theoretical guarantees}

Following \cite{frostigicml15} and~\cite{lin2015universal}, we define a \emph{proximal function} for $f(x)$ as

\begin{eqnarray}
\label{eq:proxobj}
f_\sigma(x;y) \equiv f(x) + \frac{\sigma}{2} \norm{x-y}^2=\frac{1}{N}\sum_{i=1}^N\tilde f_i(x;y),
\end{eqnarray}
\normalsize
where $\tilde f_i(x;y)=f_i(x) + \frac{\sigma}{2} \norm{x-y}^2$, $\sigma\geq0$ is a constant to be determined later and $y\in\mathbb{R}^d$ is a \emph{proximal point}. The condition number of this proximal function is
$
\kappa( f_\sigma) \equiv \frac{ L +\sigma}{ \mu +\sigma }
$
which can be smaller than $\kappa$ when $\sigma$ is large enough.

Given an algorithm, denoted by $\mathcal{A}$, that can be applied to \eqref{eqn:obj}, the acceleration scheme developed in \cite{frostigicml15} and~\cite{lin2015universal} is an iterative method that involves inner and outer loops and uses $\mathcal{A}$ as a sub-routine in its outer loops. In particular, in $p$-th outer iteration of this acceleration scheme, the algorithm $\mathcal{A}$ is applied to find a solution for the $p$-th proximal problem defined on a proximal point $y_{p-1}$, namely,

\begin{eqnarray}
\label{eq:proxproblem}
f^*_p\equiv \min_{x\in\mathbb{R}^d}f_\sigma(x;y_{p-1})\text{ for }p=1,2,\dots,P.
\end{eqnarray}
\normalsize

The algorithm $\mathcal{A}$ does not need to solve \eqref{eq:proxproblem} to optimality but only needs to generate an approximate solution $\hat x_p$ with an accuracy $\epsilon_p$ in the sense that

\begin{eqnarray}
\label{eq:proxaccuracy}
f_\sigma(\hat x_p;y_{p-1})-f^*_p\leq\epsilon_p.
\end{eqnarray}
\normalsize
When $\kappa( f_\sigma)$ is smaller than $\kappa$, finding such an $\hat x_p$ is easier than finding an $\epsilon$-optimal solution for \eqref{eqn:obj}.

Then, the acceleration scheme uses $\hat x_p$ to construct a new proximal point $y_{p}$ using an extrapolation update as $y_{p}=\hat x_p+\beta_p(\hat x_p-\hat x_{p-1})$, where $\beta_p\geq0$ is an extrapolation step length. After that,
the $p+1$-th proximal problem is constructed based on $y_p$ which will be solved in the next outer iteration. With an appropriately chosen value for $\sigma$, it is shown by \cite{frostigicml15} and~\cite{lin2015universal} that, for many existing $\mathcal{A}$ including SAG~\cite{schmidt2013minimizing,LeRouxSchmidtBach12}, SAGA~\cite{defazio2014saga}, SDCA~\cite{SSZhang13SDCA}, SVRG~\cite{JohnsonZhang13} and Finito/MISO~\cite{FINITO,MISO:15}, this acceleration scheme needs a smaller runtime for finding an $\epsilon$-optimal solution than applying algorithm $\mathcal{A}$ directly to \eqref{eqn:obj}.



Given the success of this acceleration scheme in the single-machine setting, it will be promising to also apply this scheme to the DSVRG to further improve its theoretical performance. Indeed, this can be done by choosing $\mathcal{A}$ in this aforementioned acceleration scheme to be DSVRG. Then, we can obtain the DASVRG algorithm. In particular, in the $p$-th outer iteration of the aforementioned acceleration scheme, we use DSVRG to solve the proximal problem \eqref{eq:proxproblem} in a distributed way up to an accuracy $\epsilon_p$. We present DASVRG in Algorithm \ref{alg:dppasvrg} where $\tilde f_i(x;y)=f_i(x) + \frac{\sigma}{2} \norm{x-y}^2$.

\begin{algorithm}
	\caption{Distributed Accelerated SVRG (DASVRG)}
	\label{alg:dppasvrg}
	\begin{algorithmic}[1]
		\REQUIRE An initial solution $\hat x_0\in\mathbb{R}^d$, data $\{f_i\}_{i=1,\dots,N}$, the number of machine $m$, a step length $\eta<\frac{1}{4L}$, the number of iterations $T$ in each stage of DSVRG, the number of stages $K$ of DSVRG, the number of outer iterations $P$ in the acceleration scheme, a sample size $TKP=Q\leq\tilde n m$, and a parameter $\sigma\geq0$.
		\ENSURE $\hat x_P$
        \STATE Use  $\dd$ to generate (a) a random partition $S_1,\dots,S_m$ of $[N]$, (b) $Q$ of i.i.d. indices $r_1,\dots,r_Q$ uniformly sampled with replacement form $[N]$, (c) $m$ multi-sets $\mathcal{R}=\{R_1,\dots, R_m\}$ defined as \eqref{Randr}, and (d) data $\{f_i\mid i\in S_j\cup R_j\}$ stored on machine $j$. 
		\STATE $k\leftarrow 1$
        \STATE Initialize $q=\frac{\mu}{\mu+\sigma}$, $y_0 = \hat x_0$ and $\alpha_0 = \sqrt{q}$
		\FOR{$p=1,2,\ldots,P$}
        \STATE Center computes $\tx^{0}=\hat x_{p-1}$ and sends $y_{p-1}$ to each machine
		\FOR{$\ell=0,1,2,\ldots,K-1$}
		\STATE Center sends $\tx^{\ell}$ to each machine
		\FOR{machine $j=1,2,\dots,m$ \textbf{in parallel}}
		\STATE Compute $h_j^{\ell}=\sum_{i\in S_j}\nabla \tilde f_i(\tx^{\ell};y_{p-1})$ and send it to center
		\ENDFOR
		\STATE Center computes $h^{\ell}=\frac{1}{N}\sum_{j=1}^mh_j^{\ell}$ and sends it to machine $k$
		\STATE $(\tx^{\ell+1},\mathcal{R},k)\leftarrow\text{SS-SVRG}(\tx^{\ell},h^{\ell},\mathcal{R},k,\eta,T,\{\tilde f_i(x;y_{p-1})\}_{i=1,\dots,N})$
		\ENDFOR
        \STATE Machine $k$ computes $\hat x_p=\tx^{K}$ and sends $\hat x_p$ to center
\STATE Center computes $\alpha_p \in(0,1) $ from the equation $ \alpha_p ^2 = (1-\alpha_p ) \alpha_{p-1}^2 + q\alpha_p$.
		\STATE Center computes $y_p =\hat x_p + \beta_p (\hat x_p - \hat x_{p-1})$ , where $\beta_p= \frac{\alpha_{p-1} ( 1- \alpha_{p-1}) }{ \alpha_{p-1}^2 + \alpha_p } $.
		\ENDFOR
	\end{algorithmic}
\end{algorithm}


\begin{proposition}\label{thm:DASVRG}
Suppose $\eta=\frac{1}{16L}$, $T=96\kappa( f_\sigma)$,

\begin{eqnarray}
\label{eq:asvrg_K}
K= \frac{1}{\log(9/8)}\log\left(\frac{4}{2-\sqrt{q}}+\frac{10368\sigma}{\mu q(1-\frac{\sqrt{q}}{2})^2}\right)
=O\left(\log\left(1+\frac{\sigma}{\mu}\right)\right).
\end{eqnarray}
\normalsize
and $TKP=Q\leq\tilde n m$. The solution $\hat x_p$ generated in Algorithm \ref{alg:dppasvrg} satisfies \eqref{eq:proxaccuracy} with

\begin{eqnarray}
\label{eq:prox_epsilon_p}
\epsilon_p=\frac{2}{9}[f(\hat x_0)-f(x^*)]\left(1-\frac{\sqrt{q}}{2}\right)^p, \text{ for }p=1,2,\dots,P,
\end{eqnarray}
\normalsize
Moreover, Algorithm \ref{alg:dppasvrg} finds an $\epsilon$-optimal solution for \eqref{eqn:obj} after
$P=O\left(\sqrt{\frac{\mu+\sigma}{\mu}}\log(1/\epsilon)\right)$outer iterations.
\end{proposition}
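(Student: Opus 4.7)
The plan is to establish the proposition by combining two well-established pieces: the per-stage contraction of DSVRG (Theorem~\ref{thm:svrgrr2}) applied to the proximal subproblem \eqref{eq:proxproblem}, and the generic Catalyst-type convergence bound of \cite{lin2015universal}. Since DASVRG is exactly the acceleration scheme of \cite{frostigicml15,lin2015universal} with the inner solver specialized to DSVRG, the overall outer-loop complexity is inherited once the inner-loop accuracy requirement \eqref{eq:proxaccuracy} with $\epsilon_p$ as in \eqref{eq:prox_epsilon_p} is verified.

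First I would verify that the inner DSVRG call produces $\hat x_p$ satisfying the required accuracy. Each $\tilde f_i(\cdot;y_{p-1})$ is $(L+\sigma)$-smooth and $f_\sigma(\cdot;y_{p-1})$ is $(\mu+\sigma)$-strongly convex, so the effective condition number is $\kappa(f_\sigma)$. Applying Theorem~\ref{thm:svrgrr2} to this proximal subproblem with $T=96\kappa(f_\sigma)$ yields a per-stage contraction factor of $8/9$ (the computation is identical to the one producing \eqref{eq:8over9} in the proof of Theorem~\ref{thm:svrgrr2}, carried through for the proximal objective). Thus after $K$ stages,
\begin{equation*}
\mathbb{E}\left[f_\sigma(\hat x_p;y_{p-1}) - f^*_p\right] \leq \left(\frac{8}{9}\right)^K \left[f_\sigma(\hat x_{p-1};y_{p-1}) - f^*_p\right].
\end{equation*}
It then remains to upper bound the initial inner-loop suboptimality $f_\sigma(\hat x_{p-1};y_{p-1}) - f^*_p$ uniformly in $p$. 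Using strong convexity of $f_\sigma(\cdot;y_{p-1})$, the extrapolation identity $y_{p-1}=\hat x_{p-1}+\beta_{p-1}(\hat x_{p-1}-\hat x_{p-2})$, and the outer inductive bound (implied by the previous iteration's accuracy $\epsilon_{p-1}$), one can bound this gap by a constant multiple of $[f(\hat x_0)-f(x^*)](1-\sqrt{q}/2)^{p-1}$. Requiring $(8/9)^K$ to be small enough to compensate this constant then yields the explicit choice of $K$ in \eqref{eq:asvrg_K}. This bookkeeping follows Section~B.2 of \cite{lin2015universal} verbatim.

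Second, once the accuracy requirement \eqref{eq:proxaccuracy}--\eqref{eq:prox_epsilon_p} is verified, I would invoke Theorem~3.1 of \cite{lin2015universal} with $\rho=\sqrt{q}/2$ to conclude
\begin{equation*}
\mathbb{E}[f(\hat x_P) - f(x^*)] \leq \frac{32}{q}\left(1 - \frac{\sqrt{q}}{2}\right)^{P+1}[f(\hat x_0) - f(x^*)].
\end{equation*}
Setting the right-hand side at most $\epsilon$ and substituting $q=\mu/(\mu+\sigma)$ gives $P=O(q^{-1/2}\log(1/\epsilon))=O(\sqrt{(\mu+\sigma)/\mu}\,\log(1/\epsilon))$, which is the claimed outer-iteration complexity. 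The premise $TKP=Q\leq \tilde n m$ ensures the inner DSVRG calls never exhaust the sampled multi-sets $R_1,\dots,R_m$, so that unbiasedness \eqref{eq:unbias} holds throughout.

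The main obstacle is the uniform-in-$p$ control of the initial inner suboptimality. The gap $f_\sigma(\hat x_{p-1};y_{p-1})-f^*_p$ depends not only on the previous iterate's accuracy $\epsilon_{p-1}$ but also on the momentum term $\|\hat x_{p-1}-\hat x_{p-2}\|^2$ introduced by the extrapolation; controlling the latter requires combining smoothness and strong-convexity inequalities with the outer Lyapunov decrease of the accelerated proximal-point scheme. This interplay is precisely what produces the $\log(1+\sigma/\mu)$ factor in \eqref{eq:asvrg_K}, and is the only nontrivial technical content beyond invoking Theorem~\ref{thm:svrgrr2} and Theorem~3.1 of \cite{lin2015universal} as black boxes.
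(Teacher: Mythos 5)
Your proposal is correct and follows essentially the same route as the paper's own proof: apply the DSVRG per-stage contraction (Theorem~\ref{thm:svrgrr2}, via \eqref{eq:8over9}) to the proximal subproblem, verify the accuracy requirement \eqref{eq:proxaccuracy}--\eqref{eq:prox_epsilon_p} by following Section B.2 of \cite{lin2015universal}, and invoke Theorem 3.1 of \cite{lin2015universal} with $\rho=\sqrt{q}/2$ to obtain the outer-iteration count. The only difference is that you spell out more of the bookkeeping behind the Section B.2 reduction, which the paper cites as a black box.
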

\begin{proof}Due to the unbiasedness of $\nabla f_i(x_t)-\nabla f_i(\tx^{\ell})+h^{\ell}$~\eqref{eq:unbias}, conditioning on $\hat x_{p-1}$, the solution path $\tx^0,\tx^1,\tx^2,\dots$ generated within the $p$-th outer loop of Algorithm~\ref{alg:dppasvrg} has same distribution as the solution path generated by applying single-machine SVRG to \eqref{eq:proxproblem} with an initial solution of $\hat x_{p-1}$. Hence, all the convergence results of SVRG can be applied.


According to \eqref{eq:8over9} in the proof of Theorem \ref{thm:svrgrr2}, the choices of $\eta$ and $T$ ensure
$
\mathbb{E}\left[f_\sigma(\hat x_p;y_{p-1})-f^*_p\right] \leq\left(\frac{8}{9}\right)^{K}\left[f_\sigma(\hat x_{p-1};y_{p-1})-f^*_p\right].
$
Using this result and following the analysis in Section B.2 in~\cite{lin2015universal}, we can show that $\hat x_p$ satisfies \eqref{eq:proxaccuracy} with $\epsilon_p$ given by \eqref{eq:prox_epsilon_p} if $K$ is set to \eqref{eq:asvrg_K}.

Therefore, according to Theorem 3.1 in~\cite{lin2015universal} with $\rho=\frac{\sqrt{q}}{2}$, Algorithm~\ref{alg:dppasvrg} guarantees that
$\mathbb{E}\left[f(\hat x_P)-f(x^*)\right]
\leq\frac{32}{q}\left(1-\frac{\sqrt{q}}{2}\right)^{P+1}\left[f(\hat x_0)-f(x^*)\right]$
by choosing $K$ as \ref{eq:asvrg_K}.
This means
Algorithm~\ref{alg:dppasvrg} finds an $\epsilon$-optimal solution for \eqref{eqn:obj} after $P= O(\frac{1}{\sqrt{q}}\log(1/\epsilon))=O\left(\sqrt{\frac{\mu+\sigma}{\mu}}\log(1/\epsilon)\right)$ outer loops.

The condition $TKP=Q\leq\tilde n m$ here is only to guarantee that we have enough samples in $R_1,R_2,\dots,R_m$ to finish a total of $TKP$ iterative updates.
\qed
\end{proof}

We will choose $\sigma=\frac{L}{n}$ in DASVRG and obtain the following theorem.

\begin{theorem}
\label{thm:dasvrg}
Suppose $\eta=\frac{1}{16L}$, $T=96\kappa( f_\sigma)$, $K$ is chosen as \eqref{eq:asvrg_K}, $TKP=Q\leq\tilde n m$ and $\sigma =\frac{L}{n}$. Algorithm \ref{alg:dppasvrg} finds an $\epsilon$-optimal solution for \eqref{eqn:obj} after
$O((1+\sqrt{\frac{\kappa}{n}})\log(1+\frac{\kappa}{n})\log(1/\epsilon))$ calls of SS-SVRG.
\end{theorem}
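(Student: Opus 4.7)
The plan is to obtain this bound as a direct corollary of Proposition \ref{thm:DASVRG} by substituting the specific choice $\sigma = L/n$. The total number of calls of SS-SVRG made by Algorithm \ref{alg:dppasvrg} is the product $PK$ of outer iterations $P$ and DSVRG stages $K$ per outer iteration, so I simply need to evaluate each of these quantities under the given $\sigma$.

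First I would compute $q = \mu/(\mu+\sigma)$ and the ratio $\sigma/\mu$ in terms of $\kappa$ and $n$. With $\sigma = L/n$, one has $\sigma/\mu = L/(n\mu) = \kappa/n$ and therefore $(\mu+\sigma)/\mu = 1 + \kappa/n$, which gives $1/\sqrt{q} = \sqrt{1+\kappa/n} = O\bigl(1+\sqrt{\kappa/n}\bigr)$. Plugging this into the outer-iteration count from Proposition \ref{thm:DASVRG} yields $P = O\bigl(\sqrt{(\mu+\sigma)/\mu}\,\log(1/\epsilon)\bigr) = O\bigl((1+\sqrt{\kappa/n})\log(1/\epsilon)\bigr)$.

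Next I would substitute $\sigma/\mu = \kappa/n$ into the formula \eqref{eq:asvrg_K} for $K$ stipulated by Proposition \ref{thm:DASVRG}, which immediately gives $K = O(\log(1+\sigma/\mu)) = O(\log(1+\kappa/n))$. Multiplying the two bounds then produces
\[
PK = O\!\left(\bigl(1+\sqrt{\kappa/n}\bigr)\log\bigl(1+\kappa/n\bigr)\log(1/\epsilon)\right),
\]
which is exactly the statement of Theorem \ref{thm:dasvrg}, since each stage of DSVRG corresponds to one call of SS-SVRG in Algorithm \ref{alg:dppasvrg}.

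The only non-routine aspect is verifying the applicability of Proposition \ref{thm:DASVRG}, namely that the prerequisites $\eta < 1/(4L)$, $T = 96\kappa(f_\sigma)$, the choice of $K$ in \eqref{eq:asvrg_K}, and $TKP = Q \le \tilde n m$ hold under the theorem's hypotheses; these are taken as standing assumptions of the theorem, so no additional work is required. I do not anticipate a genuine obstacle: the whole argument is a substitution of $\sigma = L/n$ into the previously proved convergence rate, with the choice of $\sigma$ being motivated by the fact that it roughly balances the per-stage cost $T = 96(L+\sigma)/(\mu+\sigma) = O(n)$ against the outer-loop acceleration factor $\sqrt{1+\sigma/\mu}$.
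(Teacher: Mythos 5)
Your proposal is correct and matches the paper's own proof essentially verbatim: both invoke Proposition \ref{thm:DASVRG} with $\sigma = L/n$, note that $\sigma/\mu = \kappa/n$ so that $P = O((1+\sqrt{\kappa/n})\log(1/\epsilon))$ and $K = O(\log(1+\kappa/n))$, and multiply to obtain the stated count of SS-SVRG calls. No gaps.
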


\begin{proof}
When $\sigma =\frac{L}{n}$, according to Proposition \ref{thm:DASVRG}, DASVRG needs $P=O(\sqrt{\frac{\mu+\sigma}{\mu}}\log(1/\epsilon))=O((1+\sqrt{\frac{\kappa}{n}})\log(1/\epsilon))$ outer iterations with $K=O(\log(1+\frac{\sigma}{\mu}))=O(\log(1+\frac{\kappa}{n}))$ calls of SS-SVRG in each according to \eqref{eq:asvrg_K}. Hence, a total of $KP=O((1+\sqrt{\frac{\kappa}{ n}})\log(1+\frac{\kappa}{ n})\log(1/\epsilon))$ calls of SS-SVRG are needed.
\qed
\end{proof}

When $\sigma =\frac{L}{n}$, we have
$
\kappa( f_\sigma)= \frac{ L +\sigma}{ \mu +\sigma }= \frac{ n L +L}{ n\mu +L }\leq n+1.
$
According to Theorem~\ref{thm:dasvrg}, DASVRG can find an $\epsilon$-optimal solution for~\eqref{eqn:obj} after $\tilde O((1+\sqrt{\frac{\kappa}{n}})\log(1/\epsilon))$ calls of SS-SVRG. Note that each call of SS-SVRG involves $T=O(\kappa( f_\sigma))=O(n)$ iterative updates. Therefore, there are $Q=TKP=\tilde O(( n+\sqrt{ n\kappa })\log(1/\epsilon))$ iterative updates in total. Since the available memory space requires $Q\leq\tilde n m=(C-n)m$, we can derive from the inequality
$
\tilde O((n+\sqrt{n\kappa })\log(1/\epsilon))\leq (C-n)m
$
that we need at least $C=\tilde \Omega(n+(\frac{n}{m}+\frac{\sqrt{n\kappa}}{m})\log(1/\epsilon))$ in order to implement DASVRG.

Under the assumptions of Theorem~\ref{thm:dasvrg}, including Assumption~\ref{assumption_C} (so $\tilde n=\Omega(n)$), we summarize the theoretical performance of DASVRG as follows.
\begin{itemize}
\item \textbf{Local parallel runtime}: Since each call of SS-SVRG involves a batch gradient computation and $T=O(n)$ iterative update,  the total runtime of DASVRG is $O((n+T)KP)=\tilde O(n+\sqrt{n\kappa})\log(1/\epsilon))$.
\item \textbf{The amount of communication}: Similar to DSVRG, we need a fixed amount $O(N)$ communication to distribute $S_1,\dots,S_m$ and an additional amount of $O(\kappa^2\log^2(1/\epsilon)/N) $ communication to distribute $R_1,\dots,R_m$ to $m$ machines. During the algorithm, the batch gradient computations and the iterative updates together require $O((m+\frac{T}{\tilde n})KP)=\tilde O((m+\frac{T}{\tilde n})(1+\sqrt{\frac{\kappa}{n}})\log(1/\epsilon))=\tilde O((m+m\sqrt{\frac{\kappa}{ n}})\log(1/\epsilon))$ amounts of communication.
\item \textbf{The number of rounds of communication}: Since each call of SS-SVRG needs $1+\frac{T}{\tilde n}$ rounds communication,  the rounds  of DASVRG is $O((1+\frac{T}{\tilde n})KP)=\tilde O((1+\sqrt{\frac{\kappa}{n}})\log(1/\epsilon))$.
\end{itemize}

Recall that DSVRG needs $O((1+\frac{\kappa}{n})\log(1/\epsilon))$ rounds of communication which is more than DASVRG. The crucial observation here is that, although in the single-machine setting, the acceleration scheme of~\cite{frostigicml15,lin2015universal} only helps when $\kappa \geq N$, in the distributed setting, it helps to reduce the rounds  as long as $\kappa$ is larger than the number of local samples $n$.


\section{Lower Bounds on Rounds of Communication}\label{sec:lowerbound}

In this section, we prove that, under Assumption~\ref{assumption_C}, any distributed first-order method will require at least $\tilde \Omega(\sqrt{\frac{\kappa}{n}}\log(1/\epsilon))$ rounds to find an $\epsilon$-optimal solution for \eqref{eqn:obj} with both partitioned data and i.i.d. sampled data in each machine. This lower bound is matched by the upper bound of the rounds needed by DASVRG in Section~\ref{sec:dppasvrg} up to some logarithmic terms.  We note that we are working under different scenarios than~\cite{Shamir15}: In~\cite{Shamir15}, the authors assumed that the only property of the data that an algorithm can exploit is that the local sums are $\delta$-related for a $\delta \approx 1/\sqrt{n}$, and proved that the number of rounds is at least $\Omega(\sqrt{\kappa}/n^{1/4})$. The DASVRG algorithm exploits the fact that data is randomly partitioned and outperforms their lower bound. This suggests that $\delta$-relatedness shouldn't be the only property that an algorithm exploits, and motivates us to prove a new (matching) lower bound by assuming the data is randomly partitioned.


\subsection{A lower bound for rounds of communication}

We first consider a family of algorithms which consist of a data distribution stage where the functions $\{f_i\}_{i\in[N]}$ are distributed onto $m$ machines, and a distributed computation stage where, in each round, machines can not only use first-order (gradient) information of the functions stored locally but also apply preconditioning using local second-order information (Hessian matrix).

\begin{definition}[Distributed (extended) first-order algorithms $\mathcal{F}_{\alpha}$]\label{def:dfo}We say an algorithm $\mathcal{A}$ for solving \eqref{eqn:obj} with $m$ machines belongs to the family $\mathcal{F}_{\alpha}$ ($\mathcal{A}\in\mathcal{F}_{\alpha}$) of distributed first-order algorithms if it distributes $\{f_i\}_{i\in[N]}$ to $m$ machines only once at the beginning such that:
	\begin{enumerate}
		\item The index set $[N]$ is randomly and evenly partitioned into $S_1,S_2,\dots,S_m$ with $|S_j|=n$ for $j=1,\dots,m$.
		\item A multi-set $R_j$ of size $\alpha n$ is created by sampling with replacement from $[N]$ for $j=1,\dots,m$, where  $\alpha\geq0$ is a constant.
        \item Let $S'_j=S_j\cup R_j$. Machine $j$ acquires functions in $\{f_i|i\in S'_j\}$ for $j=1,\dots,m$.
	\end{enumerate}
and let the machines do the following operations in rounds:
	\begin{enumerate}
		\item Machine $j$ maintains a local set of vectors $W_j$ initialized to be $W_j = \{\mathbf{0}\}$.
		\item In each round, for arbitrarily many times, machine $j$ can add any $w$ to $W_j$ if $w$ satisfies (c.f.~\cite{Shamir15})

	\begin{align}
	&\gamma w + \nu \nabla F_j(w) \in \textrm{span}\left\{w', \nabla F_j(w'), (\nabla^2 F_j(w')+D)w'', (\nabla^2 F_j(w')+D)^{-1}w''\mid\right.\nonumber\\
	&\left. w',w''\in W_j, D \textrm{ diagonal}, \nabla^2 F_j(w') \textrm{ and } (\nabla^2 F_j(w')+D)^{-1} \textrm{ exists}\right\} \label{eqn:udpate-rule}
	\end{align}
	for some $\gamma,\mu$ such that $\gamma\nu\neq  0$, where $F_j=\sum_{i\in U_j\subset S'_j} f_i$ with an arbitrary subset $U_j$ of $S'_j$.
		\item At the end of the round, all machines can simultaneously send any vectors in $W_j$ to any other machines, and machines can add the vectors received from other machines to its local working set.
		\item The final output is a vector in the linear span of one $W_j$.
	\end{enumerate}
We define $\mathcal{A}(\{f_i\}_{i\in[N]},H)$ as the output vector of $\mathcal{A}$ when it is applied to \eqref{eqn:obj} for $H$ rounds with the inputs $\{f_i\}_{i\in[N]}$.
\end{definition}

Besides the randomness due to the data distribution stage, the algorithm $\mathcal{A}$ itself can be a randomized algorithm. Hence, the output $\mathcal{A}(\{f_i\}_{i\in[N]},H)$ can be a random variable.

We would like to point out that, although the algorithms in $\mathcal{F}_{\alpha}$ can use the local second-order information like $\nabla^2 f_i(x)$ in each machine, Newton's method is still not contained in $\mathcal{F}_{\alpha}$ since Newton's method requires the access to the global second-order information such as $\nabla^2 f(x)$ (machines are not allowed to share matrices with each other). That being said, one can still use a distributed iteration method which can multiply $\nabla^2 f_i(x)$ to a local vector in order to solve the inversion of $\nabla^2 f(x)$ approximately. This method will lead to a distributed inexact Newton method such as DISCO \cite{zhang2015communication}. In fact, both DANE \cite{shamir2014communication} and DISCO \cite{zhang2015communication} belong to $\mathcal{F}_{\alpha}$ with $\alpha=0$. Suppose, in Assumption~\ref{assumption_C}, the capacity of each machine $C$ is given such that $\tilde n=cn$. The DSVRG and DASVRG algorithms proposed in this paper belong to $\mathcal{F}_{c}$ with $\alpha=c$.


 We are ready to present the lower bounds for the rounds of communications.
\vspace{-.01in}
\begin{theorem}\label{thm:lowerbound_main}
	Suppose $\kappa \ge n$ and there exists an algorithm $\mathcal{A}\in\mathcal{F}_{\alpha}$ with the following property:
	
	``For any $\epsilon>0$ and any $N$ convex functions $\{f_i\}_{i\in[N]}$ where each $f_i$ is $L$-smooth and $f$ defined in \eqref{eqn:obj} is $\mu$-strongly convex, there exists $H_{\epsilon}$ such that the output $\hat{x}=\mathcal{A}(\{f_i\}_{i\in[N]},H_{\epsilon})$ satisfies $\Exp[f(\hat{x})-f(x^*)]\le \epsilon$.''

Then, when $m\geq \max\{\exp(\frac{\alpha}{\max\{1,\alpha\}}e^{\frac{2}{\max\{1,\alpha\}}+1}),(e+\max\{1,\alpha\})^2\}$, we must have 
\small
$$
H_{\epsilon} \geq\left(\frac{\sqrt{\kappa/n-1}}{4\sqrt{2}((e+\max\{1,\alpha\})\log m)^{3/2}}\right)\log\left(\left(1-\frac{1}{(e+e^\alpha) (e+\max\{1,\alpha\})^2}\right)\frac{\mu n\|w^*\|^2}{4\epsilon}\right)
\geq\Omega(\sqrt{\frac{\kappa}{n(\log m)^3}}\log(\frac{\mu n \|w^*\|^2}{\epsilon})).
$$
\normalsize
\end{theorem}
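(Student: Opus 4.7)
The plan is to extend the chain/quadratic construction of Arjevani and Shamir \cite{Shamir15} to accommodate the auxiliary multisets $R_j$, and then run a round-by-round ``progress'' argument whose loss rate per round is controlled by a high-probability bound on how many consecutive chain-functions any single machine can hold. The proof naturally splits into four pieces.

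\textbf{Step 1 (hard instance).} I would build a hard quadratic $f(w)=\tfrac{1}{2}w^\top A w - b^\top w$ on $\mathbb{R}^k$ with a tridiagonal Hessian $A$ of condition number $\Theta(\kappa/n)$-scaled-up-to-$\kappa$, and write it as an average of $N=mn$ convex $L$-smooth functions so that each off-diagonal nonzero of $A$ is produced by exactly one function $f_i$. With this encoding, the only way the rule \eqref{eqn:udpate-rule} can extend the chain-support of the iterates by one coordinate is through the specific function $f_i$ that ``owns'' the corresponding edge. The optimum $w^*$ has the geometric-decay shape that is standard in Nesterov's lower bounds, so truncating $\hat{x}$ to its first $\ell$ coordinates yields a suboptimality of order $\mu n\|w^*\|^2(1-c\sqrt{n/\kappa})^{2\ell}$.

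\textbf{Step 2 (controlling consecutive runs in $S_j\cup R_j$).} The functions are first randomly partitioned into $S_1,\dots,S_m$, and then each $R_j$ is an independent multiset of size $\alpha n$ drawn with replacement from $[N]$. I would bound, uniformly over $j\in[m]$, the length $L_{\max}$ of the longest run of \emph{consecutive chain-indices} contained in $S'_j=S_j\cup R_j$. For the partition alone, a standard balls-and-bins / Chernoff argument gives $O(\log m)$. The additional multiset $R_j$ can glue together several short runs from $S_j$, but since $|R_j|/N=\alpha/m$, the expected number of chain-indices $R_j$ covers is only $\alpha k/m$, and a tail/union bound shows that any attempted ``bridge'' longer than $O(\sqrt{\log m})$ is exponentially unlikely. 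Composing, one gets $L_{\max}=O((\log m)^{3/2})$ with probability at least, say, $1-1/\mathrm{poly}(m)$, provided $m$ exceeds the threshold stated in the theorem (this is precisely where the $\exp(\alpha\cdot e^{2/\max\{1,\alpha\}+1})$ lower bound on $m$ comes in, to kill the $\alpha$-dependent tail).

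\textbf{Step 3 (progress invariant).} For each machine $j$, define its ``progress'' $p_j$ to be the maximum chain-coordinate appearing in any vector of its working set $W_j$. Using \eqref{eqn:udpate-rule} together with the chain structure of the Hessian, one checks that during a single round of local computation, $p_j$ can only grow by at most the length of a maximal run of consecutive chain-functions in $S'_j$, i.e.\ by at most $L_{\max}=O((\log m)^{3/2})$, regardless of how many local iterations are performed. After a single communication round, each machine updates its $p_j$ to $\max_{j'} p_{j'}$. By induction on the number of rounds, after $H$ rounds the global progress is at most $H\cdot L_{\max}$.

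\textbf{Step 4 (combining).} Since the output lies in the span of some $W_j$, its support among the chain coordinates is bounded by $H\cdot L_{\max}$. Plugging this into the Nesterov-style lower bound from Step 1 and rearranging gives
\[
\Exp[f(\hat{x})-f(x^*)]\geq (1-o(1))\,\tfrac{\mu n\|w^*\|^2}{4}\,\bigl(1-c\sqrt{n/\kappa}\bigr)^{2H\cdot O((\log m)^{3/2})}.
\]
Demanding that this be at most $\epsilon$ and solving for $H$ yields exactly the stated $H_\epsilon\geq \Omega\bigl(\sqrt{\kappa/(n(\log m)^3)}\log(\mu n\|w^*\|^2/\epsilon)\bigr)$.

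\textbf{Main obstacle.} The delicate part is Step~2: controlling the longest run of consecutive chain-functions lying in $S_j\cup R_j$. The $R_j$ can in principle create ``shortcuts'' of the chain, and since these multisets are drawn independently across machines, there is a nontrivial chance that \emph{some} machine ends up with an unusually long run. Getting the right $(\log m)^{3/2}$ dependence, rather than a weaker $\mathrm{poly}(\alpha,\log m)$, requires a careful tail analysis of the joint distribution of the partition and the $R_j$, and is precisely what forces the largeness condition on $m$ in the theorem's hypothesis.
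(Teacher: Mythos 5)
Your overall architecture — a Nesterov-style chain quadratic, a per-round bound on how far the chain frontier can advance, and a probabilistic argument over the random allocation $S_j\cup R_j$ — is the same as the paper's, and your Steps 1, 3 and 4 are sound in spirit. The genuine gap is in Step 2, and it stems from your choice in Step 1 to let \emph{each individual off-diagonal edge be owned by a distinct function} and then to control the longest run of consecutive edges held by any one machine. That run-length bound cannot be $O((\log m)^{3/2})$ independently of the chain length: for a chain with $b$ edges each owned by a distinct function, the expected number of length-$r$ runs inside some $S_j$ is of order $b\,m^{1-r}$, so the best uniform bound you can get is $r=\Theta(1+\log b/\log m)$. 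But the Nesterov truncation argument forces $b$ to be at least of order $H k \approx \sqrt{\kappa/n}\cdot\mathrm{polylog}$, so for large $\kappa$ the per-round progress you can certify grows without bound and the final inequality collapses. Your heuristic that $R_j$ only creates bridges of length $O(\sqrt{\log m})$, "composing" to $(\log m)^{3/2}$, is not derived from the joint distribution and appears reverse-engineered from the target exponent.

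The paper avoids this entirely with a different combinatorial device: the chain edges are colored \emph{cyclically} with only $k=(e+\max\{1,\alpha\})\log m$ colors, and color $s$ is owned by a single function $p_s$ (replicated $m/k$ times among the $N$ functions, all remaining functions being zero). Then any machine that is missing even one color has all of its accessible Hessians block-diagonal with blocks of size at most $k$ (Lemma~\ref{blockdiag}), so one round advances the frontier by at most $k$ — a statement completely independent of $b$. The probabilistic step reduces to showing no machine receives all $k$ colors, which follows from bounding the \emph{total count} of nonzero functions in $S_j$ (hypergeometric tail, $<e\log m$) and in $R_j$ (Chernoff, $<\max\{1,\alpha\}\log m$) below $k$; this is where the threshold on $m$ enters, matching your intuition about killing the $\alpha$-dependent tail. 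Note also that the exponent $3/2$ has a different provenance than you suggest: it is $k\cdot\sqrt{k}$, one factor of $k$ from the per-round progress and one factor of $\sqrt{k}$ from the weakened decay rate $h=\frac{\sqrt{\kappa'+k-1}-\sqrt{k}}{\sqrt{\kappa'+k-1}+\sqrt{k}}$ of the optimum, caused by averaging the quadratic form over the $k$ colored functions. Finally, the paper tensorizes the construction over $n$ orthogonal coordinate blocks (Proposition~\ref{prop:decomposable}) to realize $\mu$-strong convexity of $f$ together with per-block parameter $\mu'=n\mu$, which is where the effective condition number $\kappa'=\kappa/n$ and the $\mu n\|w^*\|^2$ in the bound come from; your proposal asserts this scaling but does not supply the mechanism.
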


We want to emphasis that Theorem~\ref{thm:lowerbound_main} holds without assuming Assumption~\ref{assumption_C}.\footnote{If the capacity $C$ is such that $\tilde n=0$ and $c=0$, the lower bound given by Theorem~\ref{thm:lowerbound_main} still applies to the algorithms in $\mathcal{F}_{\alpha}$ with $\alpha=0$. } In the definition of $\mathcal{F}_{\alpha}$, we allow the algorithm to access both randomly partitioned data and independently sampled data, and allow the algorithm to use local Hessian for preconditioning. This makes our lower bounds in Theorem~\ref{thm:lowerbound_main} {\bf stronger}: Even with an algorithm more powerful than first-order methods (in terms of the class of operations it can take) and with more options in distributing data, the number of rounds needed to find  an $\epsilon$-optimal solution still cannot be reduced.

We note that the condition $\kappa\geq n$ in Theorem~\ref{thm:lowerbound_main} is necessary. Recall that, when $\kappa\leq \frac{n^{1-2\delta}}{32}< n $ for a constant $0<\delta<\frac{1}{2}$, we showed in Proposition~\ref{thm:constantrounds} in Subsection~\ref{sec:DSVRGopt} that $O(\frac{\log(1/\epsilon)}{\delta \log n})$ rounds is enough for DSVRG. Therefore, the lower bound $H\ge \tilde\Omega(\sqrt{\frac{\kappa}{n}}\log(\frac{1}{\epsilon}))=\tilde\Omega({\frac{1}{n^{\delta}}}\log(\frac{1}{\epsilon}))$ won't be true for the case when $\kappa\leq \frac{n^{1-2\delta}}{32}< n $.


\subsection{Proof for the lower bound}

Given a vector $x\in\mathbb{R}^d$ and a set of indices $D\subset[d]$, we use $x_D$ to represent the sub-vector of $x$ that consists of the coordinates of $x$ indexed by $D$. 

\begin{definition}\label{decomposable}
	A function $f:\mathbb{R}^d\rightarrow \mathbb{R}$ is \emph{decomposable} with respect to a partition $D_1,\dots,D_{r}$ of coordinates $[d]$ if $f$ can be written as $f(x) =  g^l(x_{D_1})+\dots +  g^l(x_{D_r})$, where $ g^l:\mathbb{R}^{|D_{i}|}\rightarrow \mathbb{R}$ for $l=1,\dots,r$. A set of functions $\{f_i\}_{i\in[N]}$ is \emph{simultaneously decomposable} w.r.t a partition $D_1,\dots,D_{r}$ if each $f_i$ is decomposable w.r.t $D_1,\dots,D_{r}$.
\end{definition}

It follows the Definition~\ref{def:dfo} and Definition~\ref{decomposable} straightforwardly that:

\begin{proposition}\label{prop:decomposable}
Suppose the functions $\{f_i\}_{i\in[N]}$ in \eqref{eqn:obj} are simultaneously decomposable with respect to a partition $D_1,\dots,D_r$ so that $f_i(x)=\sum_{l=1}^rg_i^l(x_{D_l})$ with functions $g_i^l:\mathbb{R}^{|D_l|}\rightarrow\mathbb{R}$ for $i=1,\dots,N$ and $l=1,\dots,r$. We have

\begin{eqnarray}
\label{eq:decomposedobj}
x_{D_l}^*=\argmin_{w\in\mathbb{R}^{|D_l|}}\left\{\bar g^l(w)\equiv \frac{1}{N}\sum_{i=1}^Ng_i^l(w)\right\},\text{ for }l=1,2,\dots,r.
\end{eqnarray}
\normalsize
where $x^*$ is the optimal solution of \eqref{eqn:obj}.

Moreover, any algorithm $\mathcal{A}\in \mathcal{F}_{\alpha}$, when applied to $\{f_i\}_{i\in[N]}$, becomes decomposable with respect to the same partition $D_1,\dots,D_{r}$ in the following sense: For $l=1,\dots,r$, there exists an algorithm 
$\mathcal{A}_l\in \mathcal{F}_{\alpha}$ such that, after any number of rounds $H$,

$$
\mathbb{E}[f(\hat x)-f(x^*)]=\sum_{l=1}^r\mathbb{E}[\bar g^l(\hat{w}^l)-\bar g^l(x_{D_l}^*)],
$$
\normalsize
where $\hat{x}=\mathcal{A}(\{f_i\}_{i\in[N]},H)\in\mathbb{R}^d$ and $\hat{w}^l=\mathcal{A}_l(\{g_i^l\}_{i\in[N]},H)\in\mathbb{R}^{|D_l|}$ for $l=1,2,\dots,r$.
\end{proposition}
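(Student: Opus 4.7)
\textbf{Proof plan for Proposition~\ref{prop:decomposable}.}

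The first claim is immediate. Since $f_i(x)=\sum_{l=1}^r g_i^l(x_{D_l})$, averaging gives $f(x)=\sum_{l=1}^r \bar g^l(x_{D_l})$, so minimizing $f$ over $\mathbb{R}^d$ separates into $r$ independent minimizations, one per block $D_l$, and the unique minimizer of each block problem is $x^*_{D_l}$.

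The heart of the argument is the following structural observation: if $h:\mathbb{R}^d\to\mathbb{R}$ is decomposable with respect to $D_1,\dots,D_r$, then $\nabla h(w)$ vanishes outside block $l$ when one looks only at coordinates $D_l$, with $(\nabla h(w))_{D_l}=\nabla g^l(w_{D_l})$ for the $l$-th summand $g^l$; moreover $\nabla^2 h(w)$ is block-diagonal with blocks $\nabla^2 g^l(w_{D_l})$. Consequently, for any diagonal $D$, the matrix $\nabla^2 h(w)+D$ is block-diagonal, and when invertible its inverse is block-diagonal with blocks $(\nabla^2 g^l(w_{D_l})+D^{(l)})^{-1}$, where $D^{(l)}$ is the $D_l$-submatrix of $D$. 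In particular, every operation appearing in the update rule~\eqref{eqn:udpate-rule} — gradient evaluation, diagonal scaling, Hessian-vector product, and preconditioned Hessian inversion — commutes with restriction to coordinates $D_l$ whenever $F_j$ is decomposable. Since $F_j=\sum_{i\in U_j}f_i$ inherits decomposability from the $f_i$'s, this is always the case here.

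Next I would construct $\mathcal{A}_l$ by a coupling argument. Run $\mathcal{A}$ on $\{f_i\}_{i\in[N]}$ and, in parallel, run $\mathcal{A}_l$ on $\{g_i^l\}_{i\in[N]}$ with the \emph{same} random partition $S_1,\dots,S_m$ and the \emph{same} multi-sets $R_1,\dots,R_m$ (the data-distribution stage depends only on indices, so this respects membership in $\mathcal{F}_\alpha$). Maintain, for each machine $j$ and each block $l$, a local working set $W_j^l$; the invariant to prove by induction on rounds is that for every $w\in W_j$ we have $w_{D_l}\in W_j^l$, and moreover $W_j^l$ itself can be generated by legal $\mathcal{A}_l$-operations on $\{g_i^l\}_{i\in U_j}$. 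The base case $W_j=\{\mathbf{0}\}$ is trivial. For the inductive step, take any $w$ added via \eqref{eqn:udpate-rule} and restrict the identity $\gamma w+\nu\nabla F_j(w)\in\mathrm{span}\{\dots\}$ to coordinates $D_l$: by the block-diagonal observation above, one obtains exactly the analogous identity $\gamma w_{D_l}+\nu\nabla G_j^l(w_{D_l})\in\mathrm{span}\{\dots\}$ for the $l$-th block functions $G_j^l=\sum_{i\in U_j}g_i^l$, which is a legal $\mathcal{A}_l$-update. Inter-machine communications are handled by sending the corresponding block-$l$ restrictions, consuming the same number of rounds.

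Finally, the stated output identity follows: the output $\hat x$ lies in $\mathrm{span}(W_j)$ for some $j$, so $\hat x_{D_l}\in\mathrm{span}(W_j^l)$ and may be declared the output $\hat w^l$ of $\mathcal{A}_l$. Using $f(x)=\sum_l\bar g^l(x_{D_l})$ and $f(x^*)=\sum_l\bar g^l(x^*_{D_l})$ and linearity of expectation gives
\[
\mathbb{E}[f(\hat x)-f(x^*)]=\sum_{l=1}^r \mathbb{E}[\bar g^l(\hat w^l)-\bar g^l(x^*_{D_l})].
\]
The only real obstacle in this plan is the careful bookkeeping for the Hessian-inverse operation in \eqref{eqn:udpate-rule}: one must check that invertibility of $\nabla^2 F_j(w')+D$ is equivalent to blockwise invertibility of each $\nabla^2 G_j^l(w'_{D_l})+D^{(l)}$, so that if the original operation is legal then so are the $r$ block-restricted versions (and vice versa). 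The rest is essentially coordinate-wise bookkeeping.
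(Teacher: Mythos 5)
Your proposal is correct and follows essentially the same route as the paper: observe that $f$ and each $F_j$ decompose blockwise (so gradients are block-structured and Hessians block-diagonal), conclude that every operation permitted by \eqref{eqn:udpate-rule} restricts coordinate-wise to a legal operation on $\{g_i^l\}$ under the same data distribution, and define $\mathcal{A}_l$ as that restriction so that $\hat w^l=\hat x_{D_l}$. Your extra care about the invertibility of $\nabla^2 F_j(w')+D$ is resolved exactly as you suspect (a block-diagonal matrix is invertible iff each block is), and the paper's remark that an ``algorithm'' is just any admissible sequence of operations makes your coupling construction automatically a member of $\mathcal{F}_\alpha$.
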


\begin{remark}
	We note a subtlety that might be important for careful readers:  here and throughout the paper, by slight abuse of terminology,   we consider an ``algorithm"  as a sequence of operations that satisfies the requirement of Definition~\ref{def:dfo}. In this sense, an algorithm doesn't have to be describable by a Turing machine and it can access any information (e.g., even the minimizer of the sum of functions) as long as the operations that it takes satisfies the rules in Definition~\ref{def:dfo}. This different interpretation of ``algorithm'' makes Theorem~\ref{thm:lowerbound_main} even stronger and Proposition~\ref{prop:decomposable} (which is essentially a reduction statement) true and trivial. 
\end{remark}

\begin{proof}
The proof of this proposition is straightforward. Since $\{f_i\}_{i\in[N]}$ in \eqref{eqn:obj} are simultaneously decomposable with respect to a partition $D_1,\dots,D_r$, we have

$$
f(x)=\frac{1}{N}\sum_{i=1}^N\sum_{l=1}^rg_i^l(x_{D_l})=\sum_{l=1}^r\bar g^l(x_{D_l})
$$
\normalsize
so that the problem \eqref{eqn:obj} can be solved by solving \eqref{eq:decomposedobj} for each $l$ separately and $x^*_{D_l}$ must be the solution of the $l$-th problem in \eqref{eq:decomposedobj}.

In addition, the function $F_j$ in Definition~\ref{def:dfo} is also decomposable with respect to the same partition $D_1,\dots,D_r$. As a result, its gradient $\nabla F_j(x)$ also has a decomposed structure in the sense that $[\nabla F_j(x)]_{D_l}$ only depends on $x_{D_l}$ for $l=1,2,\dots,r$. Similarly, its Hessian matrix $\nabla^2 F_j(x)$ is a block diagonal matrix with $r$ blocks and the $l$-th block only depends on $x_{D_l}$. These properties ensure that each operation as \eqref{eqn:udpate-rule} conducted by $\mathcal{A}$ can be decomposed into $r$ independent operations as \eqref{eqn:udpate-rule} and applied on $x_{D_1},x_{D_2},\dots,x_{D_r}$ separately. The data distribution and the sequence of operations conducted by $\mathcal{A}$ on $x_{D_l}$ can be viewed as an algorithm $\mathcal{A}_l\in \mathcal{F}_{\alpha}$ applied to $\{g_i^l\}_{i\in[N]}$ so that $\hat{w}^l=\mathcal{A}_l(\{g_i^l\}_{i\in[N]},H)$ is indeed the subvector $\hat{x}_{D_l}$ of the vector $\hat{x}=\mathcal{A}(\{f_i\}_{i\in[N]},H)$ for $l=1,2,\dots,r$ and any $H$.
\qed
\end{proof}

Now we are ready to give the proof for Theorem~\ref{thm:lowerbound_main}.
\begin{proof}[\textbf{Theorem~\ref{thm:lowerbound_main}}]


Let $\mu' = \mu n$, $\kappa'=\frac{L}{\mu'}=\frac{\kappa}{n}$ and $k = (e+\max\{1,\alpha\})\log m$. Since $m\geq \max\{\exp(\frac{\alpha}{\max\{1,\alpha\}}e^{\frac{2}{\max\{1,\alpha\}}+1}),(e+\max\{1,\alpha\})^2\}$, we can show that $v\equiv\frac{N}{nk}=\frac{m}{(e+\max\{1,\alpha\}) \log m}\geq\frac{e+\max\{1,\alpha\}}{2\log (e+\max\{1,\alpha\})} \geq\frac{e}{2}>1$ for any $\alpha\geq 0$.\footnote{Here, we use the factor that $\frac{x}{\log x}$ is monotonically increasing on $[e,+\infty)$.} For the simplicity of notation, we will only prove Theorem~\ref{thm:lowerbound_main} when $k$ and $v$ are both integers. The general case can be proved by a very similar argument only with more sophisticated notations.

We first use the machinery developed by~\cite{Shamir15,Nesterov04book,Lan:15a} to construct $k$ functions on $\mathbb{R}^{b}$ where $b=uk$ for any integer $u\geq 1$. In particular, for $i,j=1,\dots,b$, let $\delta_{i,j}$ be an $b\times b$ matrix with its $(i,j)$ entry being one and others being zeros. Let $M_0,M_1,\dots,M_{b-1}$ be  $b\times b$ matrices defined as

\begin{eqnarray*}
M_i=\left\{
\begin{array}{ll}
\delta_{1,1}&\text{ for }i=0\\
\delta_{i,i} - \delta_{i,i+1}-\delta_{i+1,i} + \delta_{i+1,i+1}&\text{ for }1\leq i\leq b-2\\
\delta_{b-1,b-1} - \delta_{b-1,b}-\delta_{b,b-1} + \frac{\sqrt{\kappa'+k-1}+3\sqrt{k}}{\sqrt{\kappa'+k-1}+\sqrt{k}}\delta_{b,b}&\text{ for }i=b-1.
\end{array}
\right.
\end{eqnarray*}
\normalsize
For $s \in [k]$, let $\Sigma_s=\sum_{i=0}^{u-1}M_{ik+s-1}$. For example, when $u=2$ and $k=3$ (so $b=6$), the matrices $\Sigma_s$'s are given as follows.

	\begin{eqnarray*}\label{Sigmas}
\Sigma_1=\left[
\begin{array}{rrrrrr}
1 & 0& 0 &0 & 0& 0\\
0 & 0& 0 &0 & 0& 0\\
0 & 0& 1 &-1 & 0& 0\\
0 & 0& -1 &1 & 0& 0\\
0 & 0& 0 &0 & 0& 0\\
0 & 0& 0 &0 & 0& 0
\end{array}
\right],
\Sigma_2=\left[
\begin{array}{rrrrrr}
1 & -1& 0 &0 & 0& 0\\
-1 & 1& 0 &0 & 0& 0\\
0 & 0& 0 &0 & 0& 0\\
0 & 0& 0& 1 &-1 &  0\\
0 & 0& 0& -1 &1 &  0\\
0 & 0& 0 &0 & 0& 0
\end{array}
\right],
\Sigma_3=\left[
\begin{array}{rrrrrc}
0 & 0& 0 &0 & 0& 0\\
0& 1 & -1& 0 &0 &  0\\
0& -1 & 1& 0 &0 &  0\\
0 & 0& 0 &0 & 0& 0\\
0 & 0& 0&  0& 1 &-1 \\
0 & 0& 0 &  0& -1 &\frac{\sqrt{\kappa'+k-1}+3\sqrt{k}}{\sqrt{\kappa'+k-1}+\sqrt{k}}
\end{array}
\right].
	\end{eqnarray*}
\normalsize

We define $k$ functions $p_1,\dots,p_{k}:\mathbb{R}^{b}\rightarrow\mathbb{R}$ as follows

	\begin{eqnarray}\label{pfunction}
p_s(w) =\left\{
\begin{array}{ll}
\frac{L}{4}\left[\left(1-\frac{\mu'}{L}\right)\frac{w^T\Sigma_1w}{2} - \left(1-\frac{\mu'}{L}\right)e_1^{\top}w\right] + \frac{\mu'}{2}\|w\|^2&\text{ for }s=1\\
\frac{L}{4}\left[\left(1-\frac{\mu'}{L}\right)\frac{w^T\Sigma_sw}{2}\right]+ \frac{\mu'}{2}\|w\|^2&\text{ for }s=2,\dots,k,
\end{array}
\right.
	\end{eqnarray}
\normalsize
where $e_1=(1,0,\dots,0)^T\in\mathbb{R}^b$,  and denote their average by $\bar p = \frac{1}{k}\sum_{s=1}^{k} p_s$. According to the condition $\kappa\geq n$, we have $1-\frac{\mu'}{L}\geq0$ so that $p_s$ for any $s\in[k]$ and $\bar p$ are all $\mu'$-strongly convex functions. It is also easy to show that $\lambda_{\max}(\Sigma_s)\leq 4$ so that $\nabla p_s$ has a Lipschitz continuity constant of $L\left(1-\frac{\mu'}{L}\right)+\mu'=L$ and $p_s$ is $L$-smooth.

Next, we characterize the optimal solution of $\min_{w\in\mathbb{R}^b}\bar p(w)$.
\begin{lemma}\label{minp}
Let $h\in\mathbb{R}$ be the smaller root of the equation

$$
h^2-2\left(\frac{\kappa'-1+2k}{\kappa'-1}\right)h+1=0,
$$
\normalsize
namely,

$$
h=\frac{\sqrt{\kappa'+k-1}-\sqrt{k}}{\sqrt{\kappa'+k-1}+\sqrt{k}}.
$$
\normalsize
Then, $w^*=(w_1^*,w_2^*,\dots,w_b^*)^T\in\mathbb{R}^b$ with

\begin{eqnarray}
\label{wstar}
w^*_j=h^j,\text{ for }j=1,2,\dots,b
\end{eqnarray}
\normalsize
is the optimal solutions of $\min_{w\in\mathbb{R}^b}\bar p(w)$.
\end{lemma}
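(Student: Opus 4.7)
The plan is to verify that the vector $w^*$ with $w_j^* = h^j$ satisfies the first-order optimality condition $\nabla \bar p(w^*) = 0$; since $\bar p$ is $\mu'$-strongly convex, this identifies $w^*$ as the unique minimizer. I would first collapse $\bar p$ into a single quadratic form. Because $\Sigma_1,\dots,\Sigma_k$ arise from partitioning $M_0,\dots,M_{b-1}$ into $k$ groups, the matrix $B := \sum_{s=1}^{k}\Sigma_s = \sum_{i=0}^{b-1} M_i$ is tridiagonal with diagonal entries $B_{ii}=2$ for $1\le i \le b-1$, off-diagonals $B_{i,i+1}=B_{i+1,i}=-1$, and a modified corner $B_{bb} = \tfrac{\sqrt{\kappa'+k-1}+3\sqrt{k}}{\sqrt{\kappa'+k-1}+\sqrt{k}}$. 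Collecting the quadratic, linear, and regularization pieces gives $\bar p(w) = \tfrac{c}{2} w^T B w - c\, e_1^T w + \tfrac{\mu'}{2}\|w\|^2$, where $c := \tfrac{L}{4k}(1-\mu'/L) \ge 0$ (nonnegative by the hypothesis $\kappa \ge n$). The optimality condition is then the linear system $(cB + \mu' I) w^* = c\, e_1$.

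Next I would substitute $w_j^* = h^j$ and verify each row of this system. For the interior rows $2 \le i \le b-1$ the equation reads $c(-h^{i-1} + 2h^i - h^{i+1}) + \mu' h^i = 0$; dividing through by $c h^{i-1}$ yields $h^2 - (2 + \mu'/c) h + 1 = 0$, and using $\mu'/c = \tfrac{4k}{\kappa'-1}$ this is exactly the quadratic $h^2 - 2\tfrac{\kappa'-1+2k}{\kappa'-1} h + 1 = 0$ in the lemma. The first row reduces to $(2c+\mu')h - ch^2 = c$, i.e., the same quadratic after rearrangement; the right-hand side $c$ supplied by the linear term $-c\, e_1^T w$ in $\bar p$ exactly compensates for the ``missing" $-c h^0$ contribution that the interior recurrence uses. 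The last row uses the algebraic identity $B_{bb} = 2 - h$, which follows directly from $1 - h = \tfrac{2\sqrt{k}}{\sqrt{\kappa'+k-1}+\sqrt{k}}$; after this substitution the $b$-th equation again collapses to $c h^2 - (2c+\mu')h + c = 0$. Hence $w^*$ solves the full first-order system.

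The substantive observation, around which the whole construction is designed, is the boundary matching $B_{bb} = 2 - h$: the corner entry of $M_{b-1}$ is tailored precisely so that the constant-coefficient recurrence $w_j^* = h^j$ extends all the way to the last coordinate without a separate boundary correction. Taking the smaller root is also essential: by Vieta's formulas the two roots of the quadratic multiply to $1$, so exactly one lies in $(0,1)$, and this is the one that produces a decaying sequence $w_j^* = h^j$, a property presumably needed elsewhere in the lower-bound argument. Strong convexity of $\bar p$ finally upgrades this solution of the first-order condition to the unique minimizer, completing the proof.
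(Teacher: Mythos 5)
Your proposal is correct and follows essentially the same route as the paper: write $\bar p$ as a single quadratic in the tridiagonal matrix $\sum_s \Sigma_s$, derive the linear first-order system, and check that $w_j^*=h^j$ satisfies the interior recurrence (which is exactly the defining quadratic for $h$) together with the first and last rows. The paper leaves the boundary verification as "easily verified," whereas you make the key identity $B_{bb}=2-h$ explicit; otherwise the two arguments coincide.
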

\begin{proof}
By definition, we provide the following explicit formulation of $\bar p(w)$

$$
\bar p(w) =\frac{L'-\mu'}{4k}\left[\frac{1}{2}w^T\left(\sum_{s=1}^k\Sigma_s\right)w - e_1^Tw\right] + \frac{\mu'}{2}\|w\|^2.
$$
\normalsize
Observing that

	\begin{eqnarray*}\label{Amatrix}
\sum_{s=1}^k\Sigma_s=\left[
\begin{array}{rrrrrc}
2& -2& 0 &0 & 0& 0\\
-1 & 2& -2 &0 & 0& 0\\
0 & -1& 2 &-1 & 0& 0\\
\vdots & \vdots& \ddots& \ddots &\ddots &  \vdots\\
0 & 0& 0& -1 &2 &  -1\\
0 & 0& 0 &0 & -1& \frac{\sqrt{\kappa'+k-1}+3\sqrt{k}}{\sqrt{\kappa'+k-1}+\sqrt{k}}
\end{array}
\right],
	\end{eqnarray*}
\normalsize
and following \cite{Lan:15a},
we can show that $w^*$ must satisfy the following optimality conditions

	\begin{eqnarray}\nonumber
w^*_2-2\left(\frac{\kappa'-1+2k}{\kappa'-1}\right)w^*_1+1&=&0\\\label{focminp}
w^*_{j+1}-2\left(\frac{\kappa'-1+2k}{\kappa'-1}\right)w^*_j+w^*_{j-1}&=&0,\text{ for }j=2,3,\dots,b-1\\\nonumber
-\left(\frac{\sqrt{\kappa'+k-1}+3\sqrt{k}}{\sqrt{\kappa'+k-1}+\sqrt{k}}+\frac{4k}{\kappa'-1}\right)w^*_b+w^*_{b-1}&=&0
	\end{eqnarray}
\normalsize
We can easily verify that $w^*_j=h^j$ for $j=1,2,\dots,b$ satisfy all equations~\eqref{focminp} and is the optimal solution of $\min_{w\in\mathbb{R}^b}\bar p(w)$.
\qed
\end{proof}

%
%

We claim that $\{p_s\}_{s\in[k]}$ has the following property which directly follows our construction.

\begin{lemma}\label{blockdiag}
		Suppose $U$ is a {\bf strict} subset of $\{p_s\}_{s\in[k]}$, and $q$ is an arbitrary linear combination of $p_s$ in $U$. The Hessian of $q$ is a block diagonal matrix where each block has a size of at most $k$.
\end{lemma}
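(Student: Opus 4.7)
The plan is to reduce the lemma to a purely combinatorial statement about the sparsity pattern of the matrix $A := \sum_{s\in U} c_s \Sigma_s$. By \eqref{pfunction}, the Hessian of each $p_s$ is $\frac{L-\mu'}{4}\Sigma_s + \mu' I$, so by linearity
$$\nabla^2 q = \frac{L-\mu'}{4}\,A + \Bigl(\sum_{s\in U} c_s\Bigr)\mu' I.$$
Because the second term is diagonal, the block-diagonal structure of $\nabla^2 q$ is determined entirely by that of $A$. Thus it suffices to exhibit a partition of $[b]$ into blocks of size at most $k$ such that the support of $A$ (viewed as a graph on rows/columns) is contained in the union of block-diagonal $2\times 2$ submatrices indexed by pairs within a single block.

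The second step is to identify this partition using the missing index. Expanding $\Sigma_s = \sum_{i=0}^{u-1} M_{ik+s-1}$ and recalling from the definition of $M_j$ that the support of $M_j$ is contained in the $2\times 2$ block of coordinates $\{j,j+1\}$ (with $M_0 = \delta_{1,1}$ contributing only to entry $(1,1)$, and $M_{b-1}$ having a modified diagonal corner but still supported on $\{b-1,b\}$), I would observe that $A$ is a linear combination of the matrices $\{M_j : j \not\equiv s_0-1 \pmod k\}$, where $s_0$ is any index in $[k]\setminus U$ (which exists since $U\subsetneq [k]$). Hence the ``cut'' matrices $M_{s_0-1}, M_{k+s_0-1}, \ldots, M_{(u-1)k+s_0-1}$ do not appear in $A$.

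The third step is to read off the blocks. Cutting the chain $1-2-\cdots-b$ precisely at the edges $(s_0-1,s_0),(k+s_0-1,k+s_0),\ldots,((u-1)k+s_0-1,(u-1)k+s_0)$ produces a partition $B_0, B_1, \ldots, B_u$ where $B_0 = \{1,\dots,s_0-1\}$ (of size $s_0-1\le k-1$, possibly empty if $s_0=1$), each middle block $B_i = \{(i-1)k+s_0, \dots, ik+s_0-1\}$ has size exactly $k$, and the trailing block $B_u = \{(u-1)k+s_0,\dots,b\}$ has size at most $k$ because $b = uk$. By construction, every $M_j$ surviving in $A$ has index $j$ such that both $j$ and $j+1$ lie in a common block $B_i$, so no surviving $M_j$ has support bridging two distinct blocks. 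Therefore $A$ is block diagonal with respect to $B_0,\dots,B_u$, each block of size at most $k$, and so is $\nabla^2 q$.

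There is no genuinely hard step here; the argument is a one-line linearity of the Hessian followed by a bookkeeping check that the removed $M_j$'s are exactly the ``bridges'' between adjacent size-$k$ windows. The only subtleties I anticipate are the edge cases $s_0=1$ (where $M_0$ is removed but contributes nothing off-diagonal anyway, so $B_0$ is vacuous and the partition starts with $B_1$) and the modified corner entry of $M_{b-1}$ (which affects only the $(b,b)$ diagonal and so is harmless for block-diagonality); both are handled by verifying that the support of each $M_j$ remains inside a single $B_i$.
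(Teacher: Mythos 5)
Your proof is correct and follows essentially the same route as the paper: identify a missing index $s_0\in[k]\setminus U$, observe that the corresponding $\Sigma_{s_0}$ is the only one contributing the off-diagonal entries at positions $(s_0-1+ik,\,s_0+ik)$, and conclude that without it the tridiagonal structure breaks into diagonal blocks of size at most $k$. Your version merely makes the block partition and the edge cases ($s_0=1$, the corner entry of $M_{b-1}$) explicit, which the paper leaves implicit.
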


\begin{proof}
Suppose $p_{s'}$ is not in $U$ for some $s'\in[k]$. Since $q$ is a linear combination of $p_s$'s in $U$, according to the construction in \eqref{pfunction}, the Hessian of $q$ is a linear combination of one diagonal matrix and all $\Sigma_s$'s except $\Sigma_{s'}$, which is a tridiagonal matrix. We note that $\Sigma_{s'}$ is the only matrix among all $\Sigma_s$'s that has non-zero entries in the positions $(s'-1+ik,s'+ik)$ and $(s'+ik,s'-1+ik)$ for $i=0,1,\dots,u-1$ and these positions are periodically repeated with a period of $k$. Therefore, without $\Sigma_{s'}$ involved in the linear combination, the tridiagonal Hessian becomes block diagonal with each block of a size at most $k$.
\qed
\end{proof}

%

To complete the proof of Theorem~\ref{thm:lowerbound_main}, the following lemma is critical. This lemma tells us that the property given by Lemma~\ref{blockdiag} forces the machines to perform a large number of rounds of communication in order to minimize $\bar p$ whenever $\{p_s\}_{s\in[k]}$ do not appear together in any machine.

\begin{lemma}\label{lem:1d}
Suppose $b$ (or $u$) is large enough. Let $\{g_i\}_{i\in[N]}$ be functions on $\mathbb{R}^b$ that consists of $v$ copies of $\{p_s\}_{s\in[k]}$ defined as \eqref{pfunction} and $(n-1)vk$ zero functions, that is,

	\begin{eqnarray}\label{gfunction}
g_i(w) =\left\{
\begin{array}{ll}
p_s(w)&\text{ if }i=s,s+k,s+sk,\dots,s+(v-1)k\\
0&\text{ if }i\geq vk+1.
\end{array}
\right.
	\end{eqnarray}
\normalsize
Let $\bar g=\frac{1}{N}\sum_{i=1}^Ng_i$. We have $w^*=\argmin_{w\in\mathbb{R}^b}\bar p(w)=\argmin_{w\in\mathbb{R}^b}\bar g(w)$ where $w^*$ is defined as \eqref{wstar}.

Suppose an algorithm $\mathcal{A}\in\mathcal{F}_{\alpha}$ is applied to $\{g_i\}_{i\in[N]}$. Let $\mathcal{E}$ be the random event that none of the $m$ machines has all functions in $\{p_s\}_{s\in[k]}$ (in either $S_j$ or $R_j$) after the data distribution stage of $\mathcal{A}$ and let $\hat{w}=\mathcal{A}(\{g_i\}_{i\in[N]},H)$. Then, to ensure $\mathbb{E}[g(\hat{w})-g(w^*)|\mathcal{E}]\leq\epsilon$, we need $H=\left(\frac{\sqrt{\kappa'+k-1}-\sqrt{k}}{4k\sqrt{k}}\right)\log\left(\frac{\mu\|w^*\|^2}{4\epsilon}\right)$.
\end{lemma}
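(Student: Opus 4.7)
The plan is to exploit the block-diagonal structure exposed by Lemma~\ref{blockdiag} to bound how fast ``information'' about the coordinates of $w^{*}$ can propagate through the distributed algorithm when event $\mathcal{E}$ holds. First, I would verify that $w^{*}$ is indeed the minimizer of $\bar g$. Since each $p_s$ is replicated $v$ times in $\{g_i\}$ and $N=nvk$, we have $\bar g=\frac{v}{N}\sum_{s=1}^{k} p_s=\frac{1}{n}\bar p$, so the two objectives share the same minimizer (given by Lemma~\ref{minp}), and $\bar g$ is $\mu=\mu'/n$-strongly convex. I would then condition on $\mathcal{E}$ and note that every machine is missing at least one $p_{s'}$; Lemma~\ref{blockdiag} therefore guarantees that for every possible $U_j\subset S'_j$, the Hessian $\nabla^{2} F_j$ (and $\nabla^{2} F_j+D$ for any diagonal $D$, and its inverse when it exists) is block-diagonal, with blocks of size at most $k$ indexing contiguous ranges of $[b]$.

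The heart of the argument is a ``speed-limit'' lemma. For each round $t\ge 0$, define the reach $\pi_t$ as the largest index $i\in[b]$ such that some vector in $\bigcup_j \mathrm{span}(W_j)$ has a nonzero $i$-th coordinate after round $t$. I would prove by induction that $\pi_t\le (t+1)k$. The base case uses that the only ``source'' of nonzero content at the first local computation is $\nabla F_j(\mathbf{0})$, which is either zero or a multiple of $e_1$ (contributed by $p_1$); after arbitrarily many local operations, the support of any vector in $\mathrm{span}(W_j)$ stays inside the block of $\nabla^{2} F_j$ containing coordinate $1$, whose size is at most $k$. For the inductive step, any operation in \eqref{eqn:udpate-rule} performed in round $t+1$ produces a vector whose support lies in the union of those blocks of $\nabla^{2} F_j$ that meet the support of a vector already in $W_j$ (this uses that $(\nabla^{2} F_j(w')+D)^{-1}$ has the same block-diagonal partition as $\nabla^{2} F_j(w')+D$, so inversion creates no new inter-block connections). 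Since blocks have size at most $k$, each machine advances its local reach by at most $k$ in one round, and the subsequent communication substep can only set the new global reach to the maximum over machines, which is bounded by $\pi_t+k$.

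Consequently, the output $\hat w$, which lies in $\mathrm{span}(W_j)$ for some $j$ after $H$ rounds, satisfies $\hat w_i=0$ for $i>(H+1)k$. Using $w^{*}_j=h^j$ from Lemma~\ref{minp}, I would then estimate, for $b$ large enough so the tail is a constant fraction of the total,
\begin{equation*}
\|\hat w-w^{*}\|^{2}\ \ge\ \sum_{j=(H+1)k+1}^{b} h^{2j}\ \ge\ c_1\, h^{2(H+1)k}\,\|w^{*}\|^{2}.
\end{equation*}
Combining with $\mu$-strong convexity of $\bar g$ gives $\bar g(\hat w)-\bar g(w^{*})\ge \tfrac{\mu}{2}c_1 h^{2(H+1)k}\|w^{*}\|^{2}$, and taking conditional expectation under $\mathcal{E}$ preserves this pathwise bound. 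Requiring the left-hand side to be at most $\epsilon$ forces $h^{2(H+1)k}\le \frac{2\epsilon}{c_1\mu\|w^{*}\|^{2}}$. Taking logarithms and using $\log(1/h)\le 1/h-1 = \tfrac{2\sqrt{k}}{\sqrt{\kappa'+k-1}-\sqrt{k}}$ yields the stated lower bound on $H$.

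The main obstacle is the speed-limit step: the operation rule in \eqref{eqn:udpate-rule} is quite rich (it includes gradients, Hessian-vector products, Hessian-inverse-vector products with an arbitrary diagonal perturbation, and arbitrary linear combinations in a span), so I have to carefully check that each operation preserves the coordinate partition induced by the common block-diagonal structure of all Hessians of linear combinations of $\{p_s : s\in U_j\}$. Once that invariant is verified — the inverse and the diagonal-shift operations are the delicate pieces — the rest is an essentially standard Nesterov-style ``chain'' lower bound combined with the observation that the random partition under $\mathcal{E}$ keeps every machine's effective Hessian block-diagonal with block size at most $k$.
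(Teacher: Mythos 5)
Your proposal is correct and follows essentially the same route as the paper's proof: identify $\bar g=\bar p/n$, use Lemma~\ref{blockdiag} to show that under $\mathcal{E}$ the support of every working vector advances by at most $k$ coordinates per round (the paper phrases this as $\cup_j W_j\subset E_{Hk}$ after $H$ rounds), then combine $\mu'$-strong convexity of $\bar p$ with the geometric decay $w^*_j=h^j$ and the bound $\log(1/h)\le \frac{2\sqrt{k}}{\sqrt{\kappa'+k-1}-\sqrt{k}}$. Your reach bound $(H+1)k$ versus the paper's $Hk$ is an immaterial off-by-one, and your explicit verification that diagonal shifts and inversion preserve the common block partition is exactly the point the paper relies on implicitly.
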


\begin{proof}
Since $\bar g=\frac{1}{N}\sum_{i=1}^Ng_i=\frac{1}{nk}\sum_{s=1}^kp_s=\frac{\bar p}{n}$, we have $w^*=\argmin_{x\in\mathbb{R}^b}\bar g(x)=\argmin_{x\in\mathbb{R}^b}\bar p(x)$ by Lemma~\ref{minp}, where $w^*$ is defined as in \eqref{wstar}.

Let $E_0=\{\mathbf{0}\}$ and $E_t$ be the linear space spanned by the unit vectors $e_1,\dots, e_t$ for $t=1,\dots,b$. Suppose event $\mathcal{E}$ happens. Every machine will only have a strict subset $U$ of $\{p_s\}_{s\in[k]}$. Lemma~\ref{blockdiag} guarantees that, under algorithm $\mathcal{A}$, if machine $j$ starts one round with a set of working vectors $W_j\subset E_t$, then $W_j$ is always contained by the space $E_{t+k}$ after this round. Therefore, we can show that, at the beginning of round $\ell$ in algorithm $\mathcal{A}$, if $\cup_{j=1}^m W_j \subset E_t$, then at the end of round $\ell$ (and at the beginning of round $\ell+1$), we have $\cup_j W_j \subset E_{t+k}$. Using this finding and the fact that $\cup W_j = \{\mathbf{0}\}= E_0$ initially, we conclude that, after $H$ rounds in $\mathcal{A}$, $\cup_j W_j \subset E_{Hk}$. Let $t = Hk$.  Since $\hat{w}=\mathcal{A}(\{g_i\}_{i\in[N]},H)$, we must have $\hat{w}\in E_t$.

By \eqref{wstar}, we can show that

\begin{eqnarray}
\label{eq:lowerboundsparse}
\|w^*\|^2=\sum_{j=1}^b(w^*_j)^2=\sum_{j=1}^bh^{2j}=\frac{h^2(1-h^{2b})}{1-h^2}.
\end{eqnarray}
\normalsize
Following the analysis in~\cite{Shamir15,Nesterov04book,Lan:15a} and using the $\mu'$-strong convexity of $\bar p$, we have

\begin{eqnarray*}
\mathbb{E}[\bar p(\hat{w})-\bar p(w^*)|\mathcal{E}]
\geq\frac{\mu'}{2}\mathbb{E}[\|\hat{w}-w^*\|^2|\mathcal{E}]
\geq\frac{\mu'}{2}\sum_{j=t+1}^b\mathbb{E}[(w^*_j)^2|\mathcal{E}]
\geq\frac{\mu'}{2}\frac{h^{2t+2}(1-h^{2b-2t})}{1-h^2}
\geq\frac{\mu'\|w^*\|^2}{2}\frac{(h^{2t}-h^{2b})}{1-h^{2b}}
\end{eqnarray*}
\normalsize
where the second inequality is because $\hat{w}\in E_t$ and
the third inequality is due to \eqref{eq:lowerboundsparse}. When $b$ (or $u$) is large enough, the inequality above implies

$$
\mathbb{E}[\bar p(\hat{w})-\bar p(w^*)|\mathcal{E}]
\geq\frac{\mu'\|w^*\|^2h^{2t}}{4}
=\frac{\mu'\|w^*\|^2}{4}\left(\frac{\sqrt{\kappa'+k-1}-\sqrt{k}}{\sqrt{\kappa'+k-1}+\sqrt{k}}\right)^{2t}.
$$
\normalsize
Based on this inequality, when $\mathbb{E}[\bar g(\hat{w})-\bar g(w^*)|\mathcal{E}]\leq\epsilon$, or equivalently, when $\mathbb{E}[\bar p(\hat{w})-\bar p(w^*)|\mathcal{E}]\leq n\epsilon$, we must have

$$
\log\left(\frac{\mu'\|w^*\|^2}{4n\epsilon}\right)
\leq
2t\log\left(\frac{\sqrt{\kappa'+k-1}+\sqrt{k}}{\sqrt{\kappa'+k-1}-\sqrt{k}}\right)
\leq
2t\log\left(1+\frac{2\sqrt{k}}{\sqrt{\kappa'+k-1}-\sqrt{k}}\right)
\leq
\frac{4t\sqrt{k}}{\sqrt{\kappa'+k-1}-\sqrt{k}}
$$
\normalsize
which further implies

$$
H = \frac{t}{k} \ge \left(\frac{\sqrt{\kappa'+k-1}-\sqrt{k}}{4k\sqrt{k}}\right)\log\left(\frac{\mu\|w^*\|^2}{4\epsilon}\right)
$$
\normalsize
\qed
\end{proof}

We now complete the proof of Theorem~\ref{thm:lowerbound_main} by constructing $N$ special functions $\{f_i\}_{i\in[N]}$ on $\mathbb{R}^d$ with $d=nb$ for a sufficiently large $b$ (or $u$) based on $\{p_s\}_{s\in[k]}$, so that any algorithm $\mathcal{A}\in\mathcal{F}_{\alpha}$, when applied to $\{f_i\}_{i\in[N]}$, will need at least the targeted amount of rounds of communication.

We partition the set of indices $[d] = \{1,\dots, d\}$ into $n$ disjoint subsets $D_1,D_2,\dots, D_n$ with $|D_j|=b$ and $D_j = \{b(j-1)+1,\dots, bj\}$. For any $j\in [n]$ and $s\in [k]$, let $q_{j,s}(x)$ be a function on $\mathbb{R}^d$ such that  $q_{j,s}(x)=p_s(x_{D_j})$, which means $q_{j,s}(x)$ only depends on the $b$ coordinates of $x$ indexed by $D_j$. Therefore, we obtain $nk$ different functions $\{q_{j,s}\}_{j\in[n],s\in[k]}$. Finally, we define $\{f_i\}_{i\in[N]}$ to be a set that consists of $v$ copies of $\{q_{j,s}\}_{j\in[n],s\in[k]}$ (recall that $N=vkn$ and $v\geq 1$ is an integer). Because

\begin{eqnarray}
\label{fandp}
f(x) = \frac{1}{N}\sum_{i=1}^{N} f_i(x)=\frac{v}{vnk}\sum_{j=1}^{n}\sum_{s=1}^{k} q_{j,s}(x)=\frac{v}{vnk}\sum_{j=1}^{n}\sum_{s=1}^{k} p_s(x_{D_j})=\frac{1}{nk}\sum_{j=1}^{n}\sum_{s=1}^{k} p_s(x_{D_j}) = \frac{1}{n}\sum_{j=1}^{n}\bar p(x_{D_j})
\end{eqnarray}
\normalsize
and Lemma~\ref{minp}, the optimal solution $x^*$ for \eqref{eqn:obj} with $\{f_i\}_{i\in[N]}$ constructed as above is $x^*=(w^*,w^*,\dots,w^*)^T$ where $w^*\in\mathbb{R}^d$ is defined as \eqref{wstar} and is repeated for $n$ times.

Now, we want to verify that functions $\{f_i\}_{i\in[N]}$ satisfy our assumptions. In fact, we have shown that $p_s$ is $L$-smooth for each $s\in[k]$. Since $f_i$ is either an zero function or equals $p_s(x_{D_j})$ for some $j\in[n]$ and $s\in[k]$, the function $f_i$ is $L$-smooth for each $i\in[N]$ as well. Since $\bar p$ is $\mu'$-strongly convex (on $\mathbb{R}^b$) and $\mu'=n\mu$, the function $f$ defined in \eqref{eqn:obj} must be $\mu$-strongly convex (on $\mathbb{R}^d$) according to the relationship~\eqref{fandp}.

%
%


 \newcommand{\sJ}{\text{DOS}}

%


According to its construction, $\{f_i\}_{i\in[N]}$ are simultaneously decomposable with respect to a partition $D_1,\dots,D_n$ with $D_j = \{d(j-1)+1,\dots, dj\}$ (see Definition~\ref{decomposable}). In particular, for any $i\in[N]$, $f_i(x)=\sum_{l=1}^ng_i^l(x_{D_l})$ where $g_i^l\in\{p_s\}_{s\in[k]}$ for exactly one $l\in[n]$ and $g_i^l=0$ for other $l$'s. Moreover, for any $l\in[n]$,  $\{g_i^l\}_{i\in[N]}=\{g_i\}_{i\in[N]}$ where $\{g_i\}_{i\in[N]}$ are defined as \eqref{gfunction} such that   $\bar g^l\equiv\frac{1}{N}\sum_{i=1}^Ng_i^l=\frac{1}{N}\sum_{i=1}^Ng_i=\bar g$.

By Proposition~\ref{prop:decomposable}, $\mathcal{A}$ can be decomposed with respective to the same partition $D_1,D_2,\dots, D_n$ into $\mathcal{A}_1,\dots,\mathcal{A}_n\in\mathcal{F}_{\alpha}$ and $\mathcal{A}_l$ is applied to $\{g_i\}_{i\in[N]}$.
Following Definition~\ref{def:dfo}, let $ S_1, \dots, S_{m}$ be the random partition of $[N]$ and and $R_1,\dots, R_m$ be set of i.i.d. indices uniformly drawn from $[N]$ with $|R_j| = \alpha n $. Let $S'_j=S_j\cup R_j$. Then, the algorithm $\mathcal{A}_l$ will allocate $\{g_i|i\in S'_j\}$ to machine $j$ and start the computation in rounds.

We now focus on the solution generated by $\mathcal{A}_l$ for any $l$. For machine $j$ in $\mathcal{A}_l$, let $Y_{1,j}$ be the number of functions in $\{p_s\}_{s\in[k]}$ (repetitions counted) that are contained in $S_{j}$ and $Y_{2,j}$ be the number of functions in $\{p_s\}_{s\in[k]}$ (repetitions counted) that are contained in $R_{j}$.

Due to \eqref{gfunction}, the function $g_i$ is not a zero function if and only if $1\leq i\leq vk=m$. Hence, $Y_{1,j}$ has a hypergeometric distribution where $\text{Prob}(Y_{1,j}=r)$ equals the probability of $r$ successes in $n$ draws, without replacement, from a population of size $N$ that contains exactly $m$ successes. According to Chvatal~\cite{Chvatal1979}, we have

$$
\text{Prob}(Y_{1,j}\geq r)\leq\left(\frac{1}{r}\right)^{r}\left(\frac{n-1}{n-r}\right)^{n-r}
$$
\normalsize
which, when $r=e\log m$, implies

\begin{eqnarray}
\nonumber
\text{Prob}(Y_{1,j}\geq e\log m)&\leq&\left(\frac{1}{e\log m}\right)^{e\log m}\left(\frac{n-1}{n-e\log m}\right)^{n-e\log m}\\
\nonumber
&=&\left(\frac{1}{e\log m}\right)^{e\log m}\left(1+\frac{e\log m-1}{n-e\log m}\right)^{n-e\log m}\\
\nonumber
&<&\left(\frac{1}{e\log m}\right)^{e\log m}e^{e\log m-1}\\\nonumber
&=&\frac{1}{e}\left(\frac{1}{\log m}\right)^{e\log m}\\\nonumber
&\leq&\frac{1}{e}\left(\frac{1}{2\log(e+1)}\right)^{e\log m}\\\label{eq:Y1}
&\leq&\frac{1}{em^2}
\end{eqnarray}
\normalsize
where the second inequality is because $(1+\frac{1}{x})^x<e$ for any $x>0$, the third inequality is due to the assumption that $m\geq \max\{\exp(\frac{\alpha}{\max\{1,\alpha\}}e^{\frac{2}{\max\{1,\alpha\}}+1}),(e+\max\{1,\alpha\})^2\}\geq (e+1)^2$, and the last inequality is because $(2\log(e+1))^e>e^2$.

%

On the other hand, we can represent $Y_{2,j}  =\sum_{r \in R_j}\mathbf{1}_{r\leq vk}$ which is the sum of $\alpha n$ i.i.d. binary random variables $\mathbf{1}_{r\leq vk}$'s which equal one with a probability of $\frac{vk}{N}=\frac{1}{n}$ and zero with a probability of $1-\frac{1}{n}$.
By Chernoff inequality of multiplicative form, we have

\begin{eqnarray}
\nonumber
\text{Prof}(Y_{2,j}\geq \max\{1,\alpha\}\log m) 
&\leq&\left(\frac{e^{\frac{\max\{1,\alpha\}}{\alpha}\log m- 1}}{\left(\frac{\max\{1,\alpha\}}{\alpha}\log m\right)^{\frac{\max\{1,\alpha\}}{\alpha}\log m}}\right)^\alpha\\\nonumber
&=&\frac{e^{\max\{1,\alpha\}\log m- \alpha}}{\left(\frac{\max\{1,\alpha\}}{\alpha}\log m\right)^{\max\{1,\alpha\}\log m}}\\\nonumber
&=&\frac{1}{e^\alpha}\left(\frac{e\alpha}{\max\{1,\alpha\}\log m}\right)^{\max\{1,\alpha\}\log m}\\\nonumber
&\leq&\frac{1}{e^\alpha}\left(\frac{1}{e^{\frac{2}{\max\{1,\alpha\}}}}\right)^{\max\{1,\alpha\}\log m}\\\label{eq:Y2}
&=&\frac{1}{e^\alpha m^2},
\end{eqnarray}
\normalsize
where the second inequality is because  of the assumption that $m\geq \max\{\exp(\frac{\alpha}{\max\{1,\alpha\}}e^{\frac{2}{\max\{1,\alpha\}}+1}),(e+\max\{1,\alpha\})^2\}\geq \exp(\frac{\alpha}{\max\{1,\alpha\}}e^{\frac{2}{\max\{1,\alpha\}}+1})$.

Combining \eqref{eq:Y1} and \eqref{eq:Y2} for $j=1,2,\dots,m$ and using the union bound, we have

\begin{eqnarray}
\nonumber
\text{Prob}(Y_{1,j}\geq e\log m\text{ for some }j \text{ or }Y_{2,j}\geq \max\{1,\alpha\}\log m\text{ for some } j)\leq \frac{1}{em}+\frac{1}{e^\alpha m}
= \frac{1}{(e+e^\alpha) m},
\end{eqnarray}
\normalsize
which implies

\begin{eqnarray}
\nonumber
\text{Prob}(Y_{1,j}+Y_{2,j}< (e+\max\{1,\alpha\})\log m\text{ for }j=1,2,\dots,m)\geq 1-\frac{1}{(e+e^\alpha) m}.
\end{eqnarray}
\normalsize
Therefore, we have shown that, with a probability of at least $ 1-\frac{1}{(e+e^\alpha) m}$, all of the sets $S_1',\dots, S_m'$ contain fewer than $(e+\max\{1,\alpha\}) \log m =k$ functions from $\{p_s\}_{s\in[k]}$ (repetition counted). In other words, with a probability of at least $ 1-\frac{1}{(e+e^\alpha) m}$, none of the sets $S_1',\dots, S_m'$ contains all of the functions in $\{p_s\}_{s\in[k]}$. If the event that ``\emph{none of the sets $S_1',\dots, S_m'$ contains all of functions of $\{p_s\}_{s\in[k]}$}'' (same as the event $\mathcal{E}$ in Lemma~\ref{lem:1d}) indeed happens in $\mathcal{A}_l$, we call $\mathcal{A}_l$ \textit{bad}. Then, we  have actually proved

$$
\text{Prob}(\mathcal{A}_l\text{ is bad})\geq\left(1-\frac{1}{(e+e^\alpha) m}\right)\geq \left(1-\frac{1}{(e+e^\alpha) (e+\max\{1,\alpha\})^2}\right).
$$
\normalsize




By Proposition \ref{prop:decomposable} and $\{g_i^l\}_{i\in[N]}=\{g_i\}_{i\in[N]}$, after $H$ rounds, the solutions  $\hat{x}=\mathcal{A}(\{f_i\}_{i\in[N]},H)\in\mathbb{R}^d$ and $\hat{w}^l=\mathcal{A}_l(\{g_i^l\}_{i\in[N]},H)=\mathcal{A}_l(\{g_i\}_{i\in[N]},H)\in\mathbb{R}^{|D_l|}$ for $l=1,2,\dots,n$ satisfy

\begin{eqnarray}
\nonumber
\mathbb{E}[f(\hat x)-f(x^*)]&=&\sum_{l=1}^n\mathbb{E}[\bar g^l(\hat{w}^l)-\bar g^l(x_{D_l}^*)]\\\nonumber
&=&\sum_{l=1}^n\mathbb{E}[\bar g(\hat{w}^l)-\bar g(x_{D_l}^*)]\\\nonumber
&\geq &\sum_{l=1}^n\mathbb{E}[\bar g(\hat{w}^l)-\bar g(x_{D_l}^*)|\mathcal{A}_l\text{ is bad}]\text{Prob}(\mathcal{A}_l\text{ is bad})\\\nonumber
&\geq &\sum_{l=1}^n\mathbb{E}[\bar g(\hat{w}^l)-\bar g(x_{D_l}^*)|\mathcal{A}_l\text{ is bad}]\left(1-\frac{1}{(e+e^\alpha) (e+\max\{1,\alpha\})^2}\right).
\end{eqnarray}
\normalsize
Therefore, if $\mathbb{E}[f(\hat x)-f(x^*)]\leq\epsilon$, there must exist an $l\in[n]$ such that

\begin{eqnarray}
\label{eq:lmachinegap}
\mathbb{E}[\bar g(\hat{w}^l)-\bar g(x_{D_l}^*)|\mathcal{A}_l\text{ is bad}]\left(1-\frac{1}{(e+e^\alpha) (e+\max\{1,\alpha\})^2}\right) \le \frac{\epsilon}{n}.
\end{eqnarray}
\normalsize
When $\mathcal{A}_l$ is bad,  after the data distribution stage, none of the $m$ machines in $\mathcal{A}_l$ has all functions in $\{p_s\}_{s\in[k]}$. According to Lemma~\ref{lem:1d},  we know that to ensure \eqref{eq:lmachinegap}, $\mathcal{A}_l$ needs

\begin{eqnarray*}
H &\ge& \left(\frac{\sqrt{\kappa'+k-1}-\sqrt{k}}{4k\sqrt{k}}\right)\log\left(\left(1-\frac{1}{(e+e^\alpha) (e+\max\{1,\alpha\})^2}\right)\frac{\mu n\|w^*\|^2}{4\epsilon}\right)\\
&\geq&\left(\frac{\sqrt{\kappa'-1}}{4\sqrt{2}k\sqrt{k}}\right)\log\left(\left(1-\frac{1}{(e+e^\alpha) (e+\max\{1,\alpha\})^2}\right)\frac{\mu n\|w^*\|^2}{4\epsilon}\right)
\end{eqnarray*}
\normalsize
which is the desired lower bound after plugging in $k=(e+\max\{1,\alpha\})\log m$.

\qed
\end{proof}

\section{Numerical Experiments}
\label{sec:exp}

\begin{figure}[t]
    \begin{tabular}[h]{@{}c|ccc@{}}
    $\lambda$ & Covtype(RFF) & Million Song(RFF) & Epsilon \\
    \hline \\
    \raisebox{10ex}{$\frac{1}{N^{\frac{1}{2}}}$}
        &\quad \includegraphics[width=0.28\textwidth]{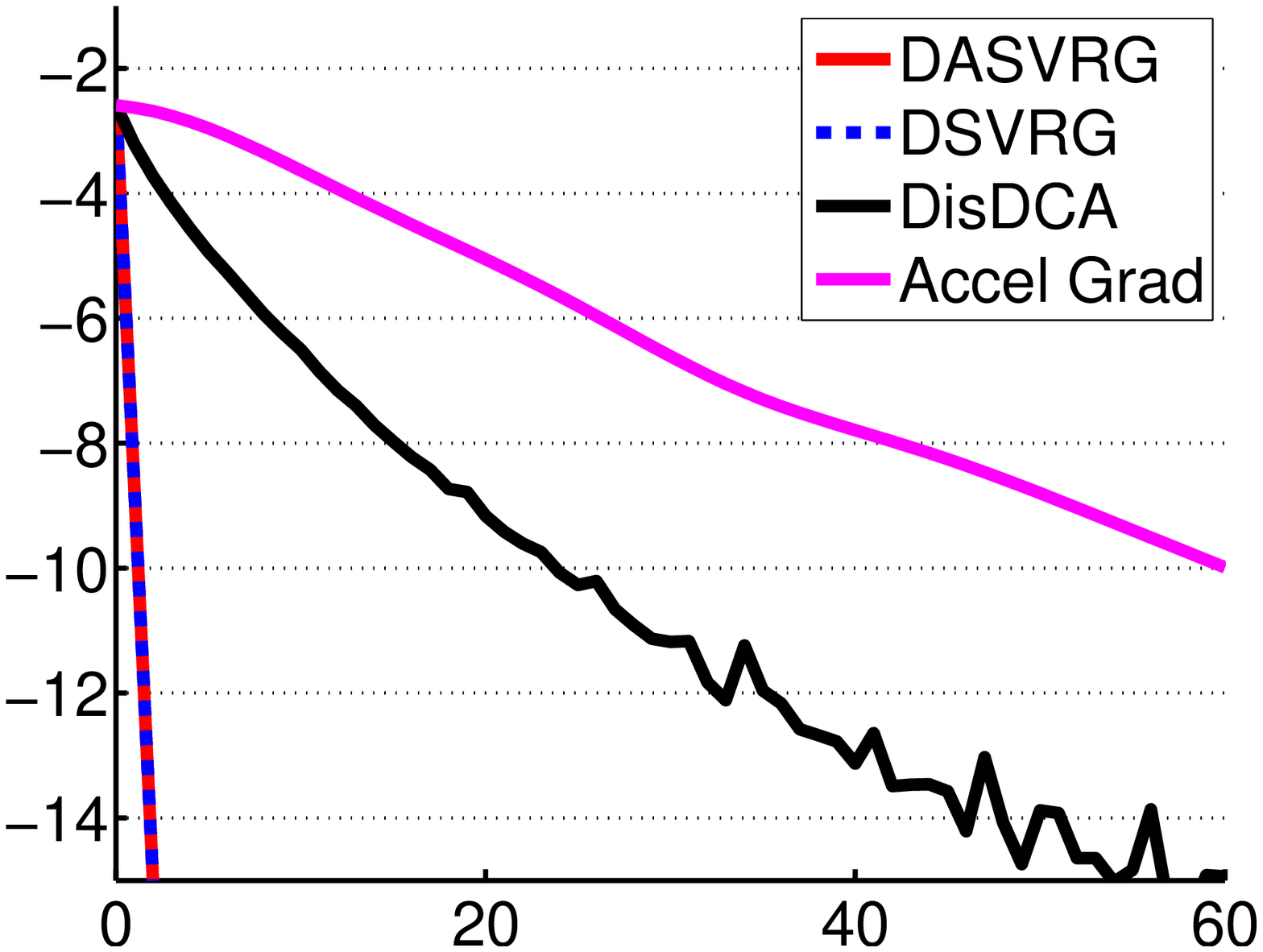}
        & \includegraphics[width=0.28\textwidth]{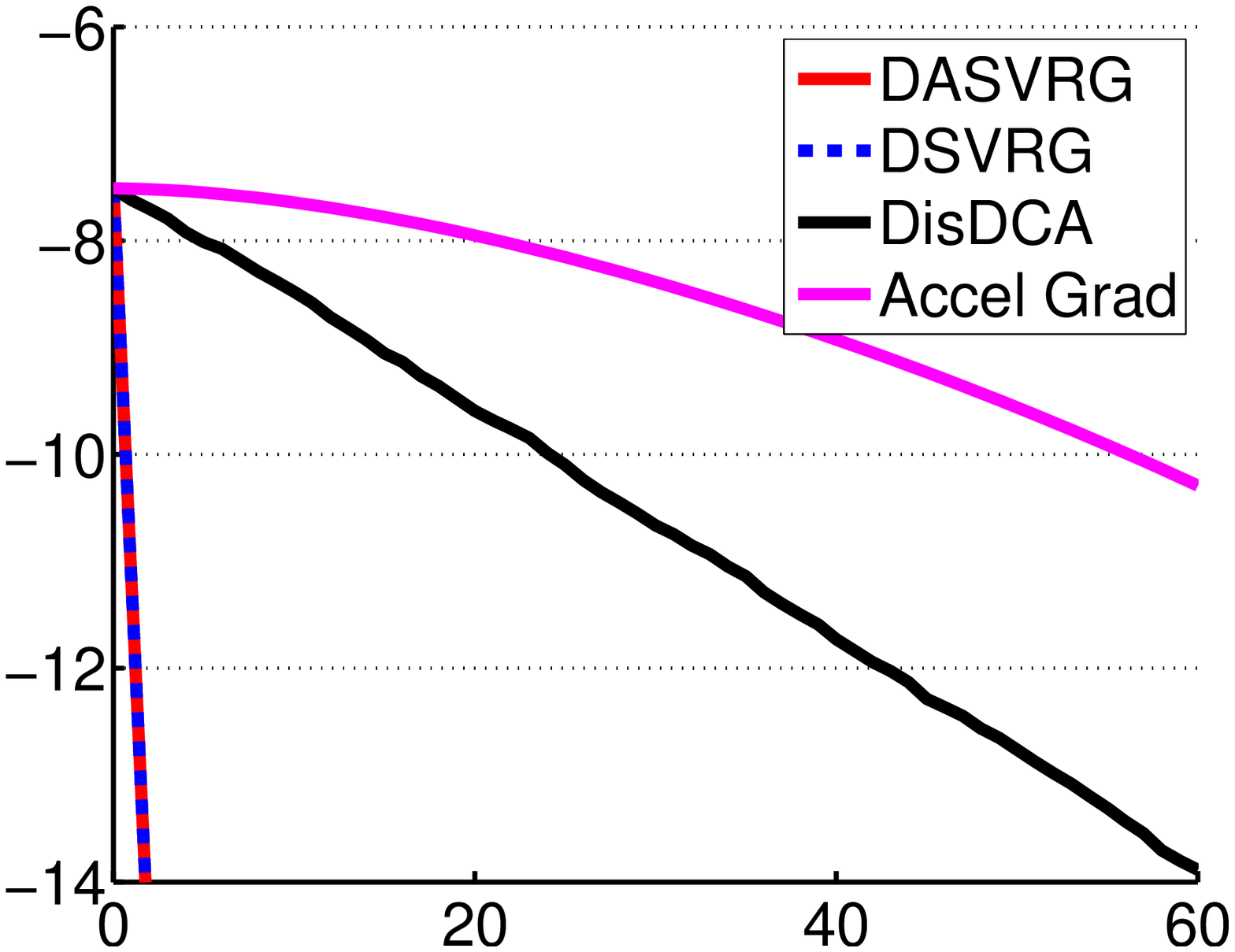}
        & \includegraphics[width=0.28\textwidth]{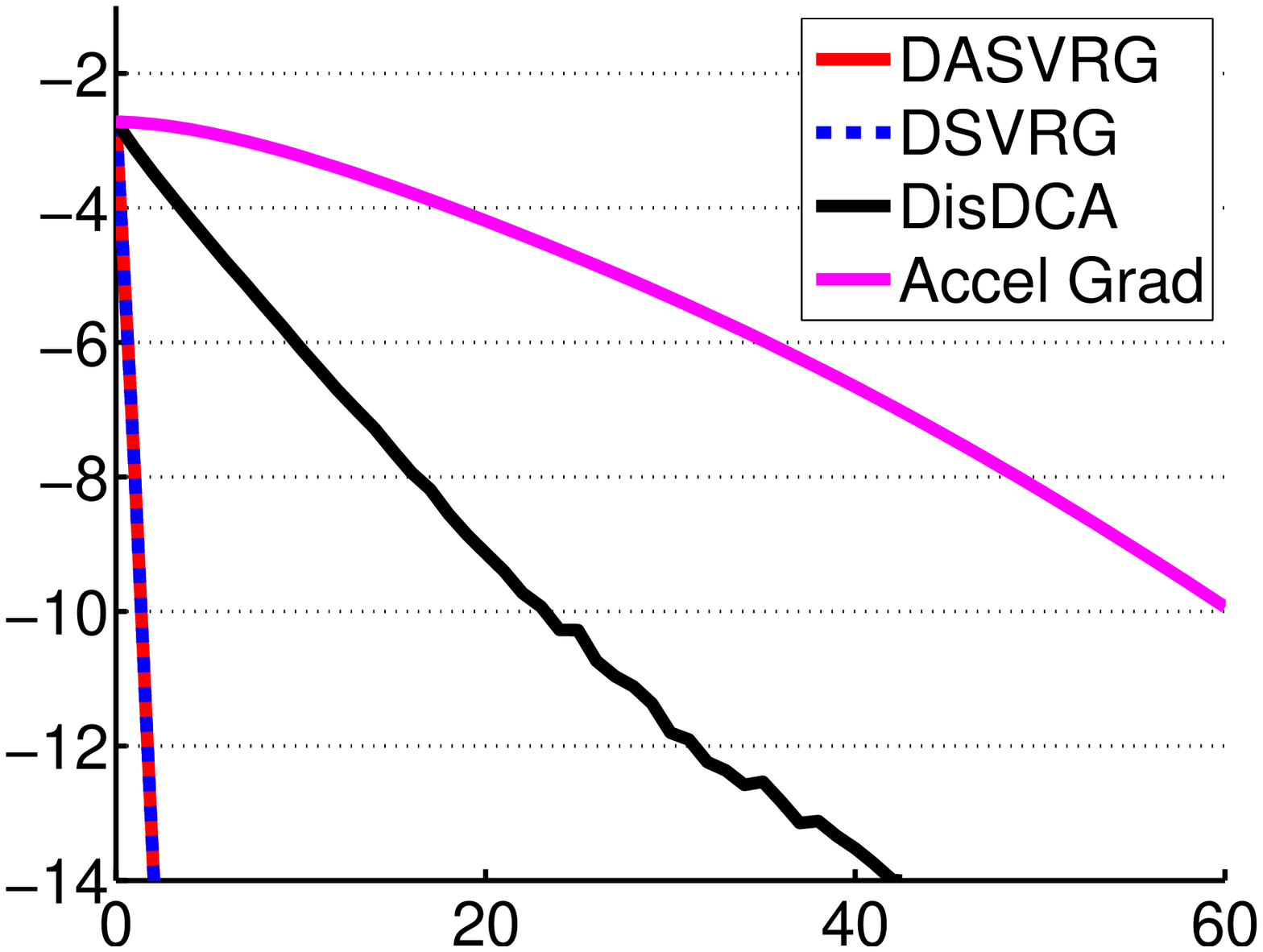}\\
    \raisebox{10ex}{$\frac{1}{N^{\frac{3}{4}}}$}
        &\quad \includegraphics[width=0.28\textwidth]{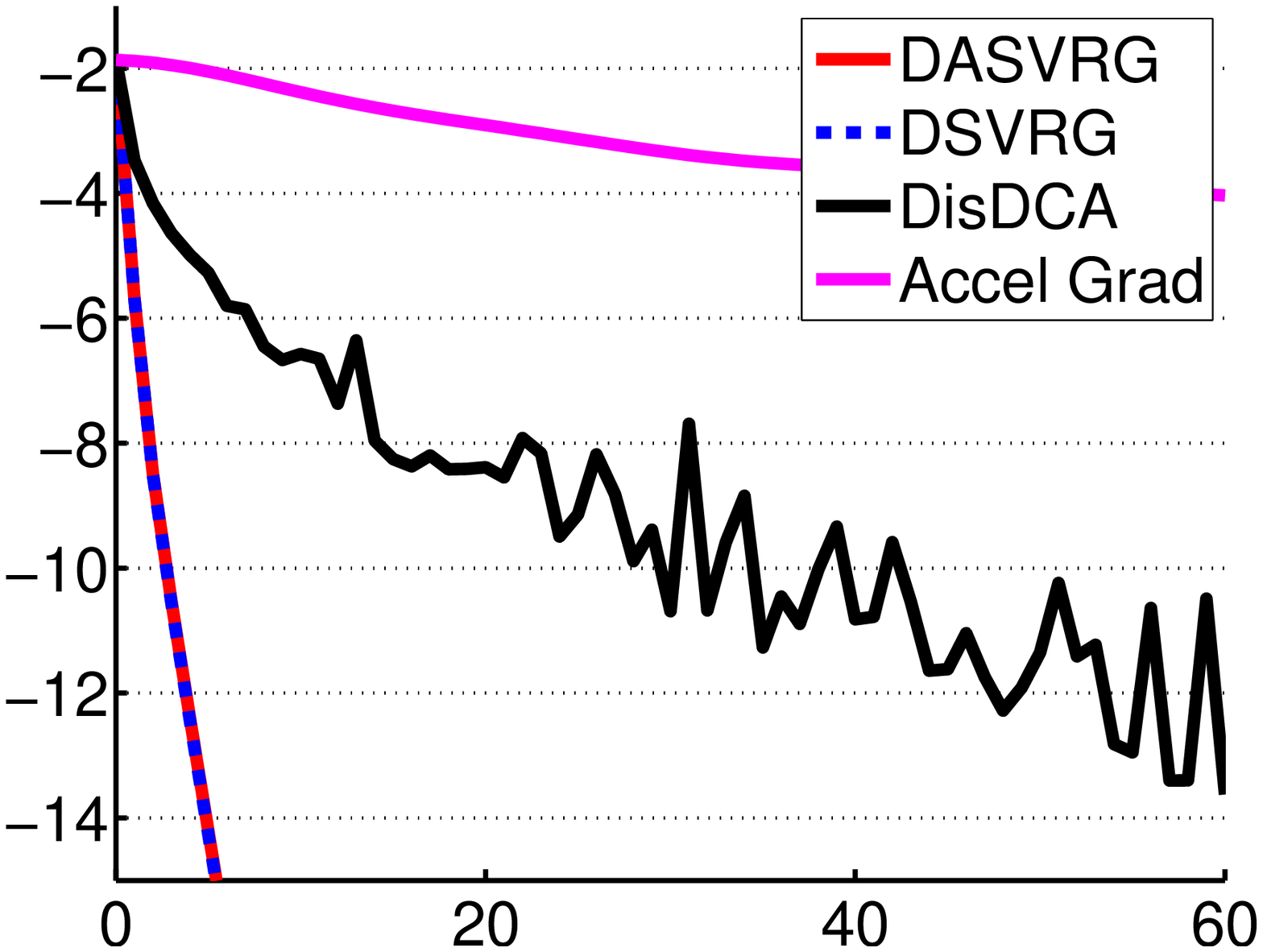}
        & \includegraphics[width=0.28\textwidth]{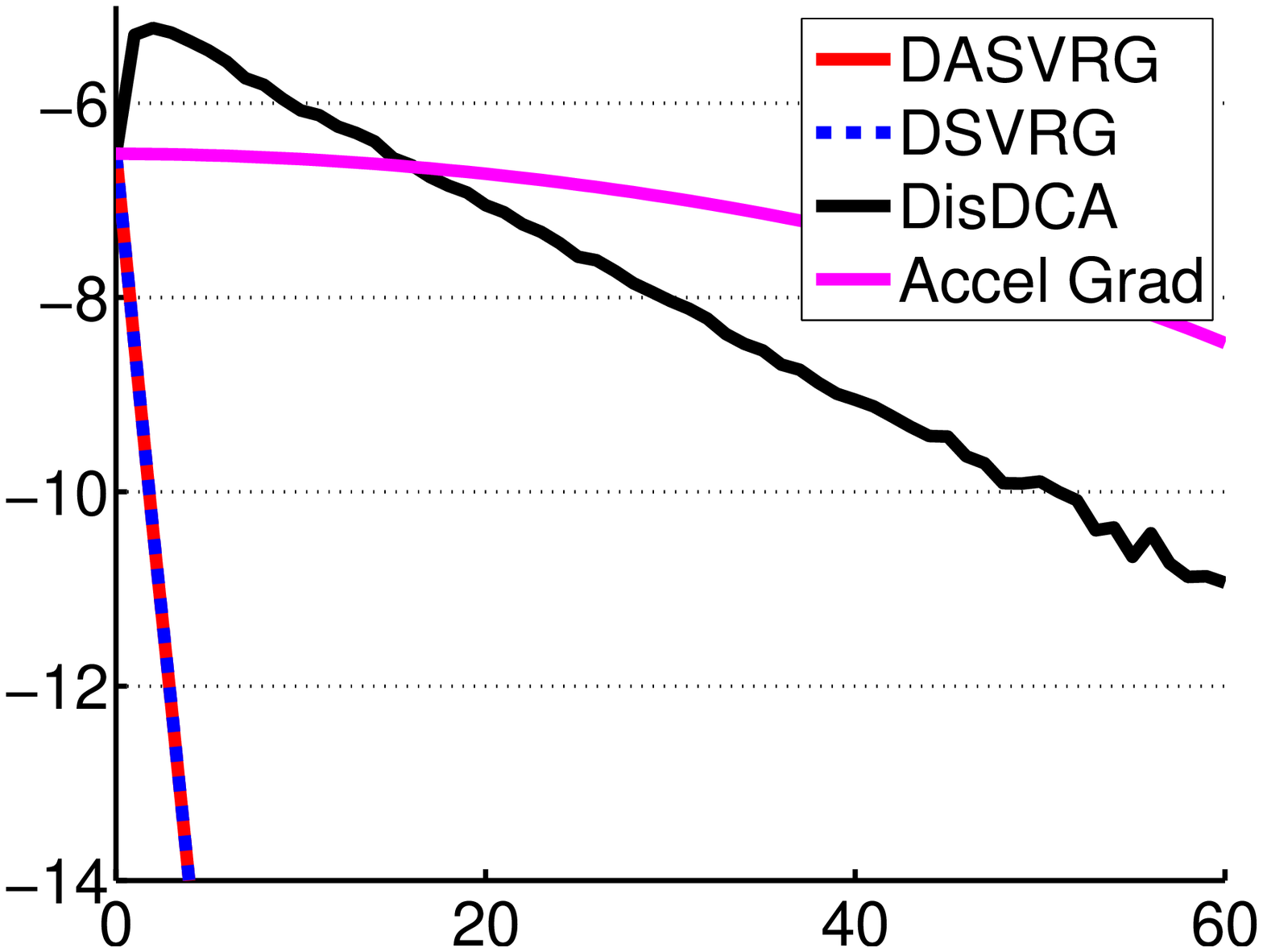}
        & \includegraphics[width=0.28\textwidth]{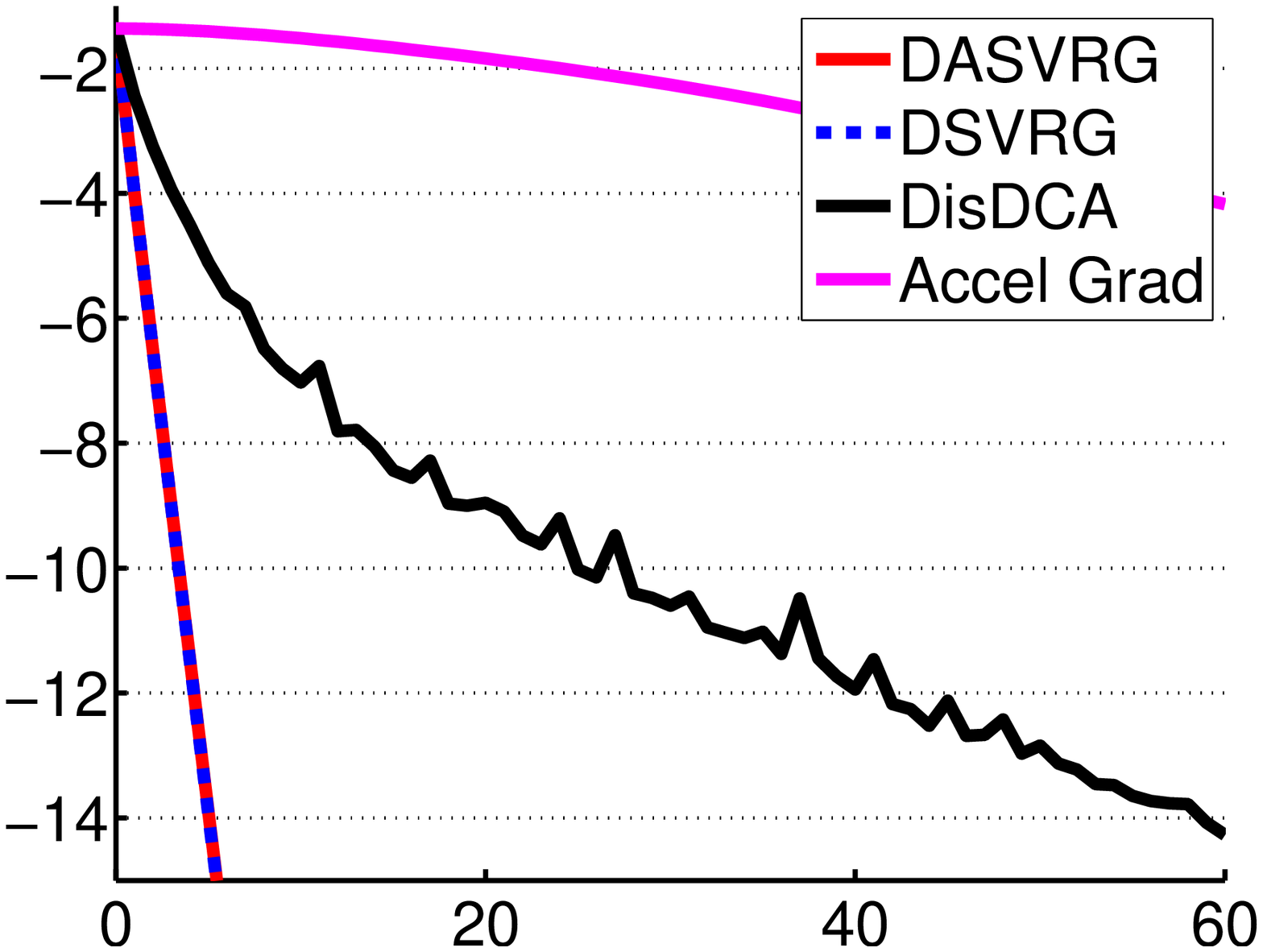}\\
    \raisebox{10ex}{$\frac{1}{N}$}
        &\quad \includegraphics[width=0.28\textwidth]{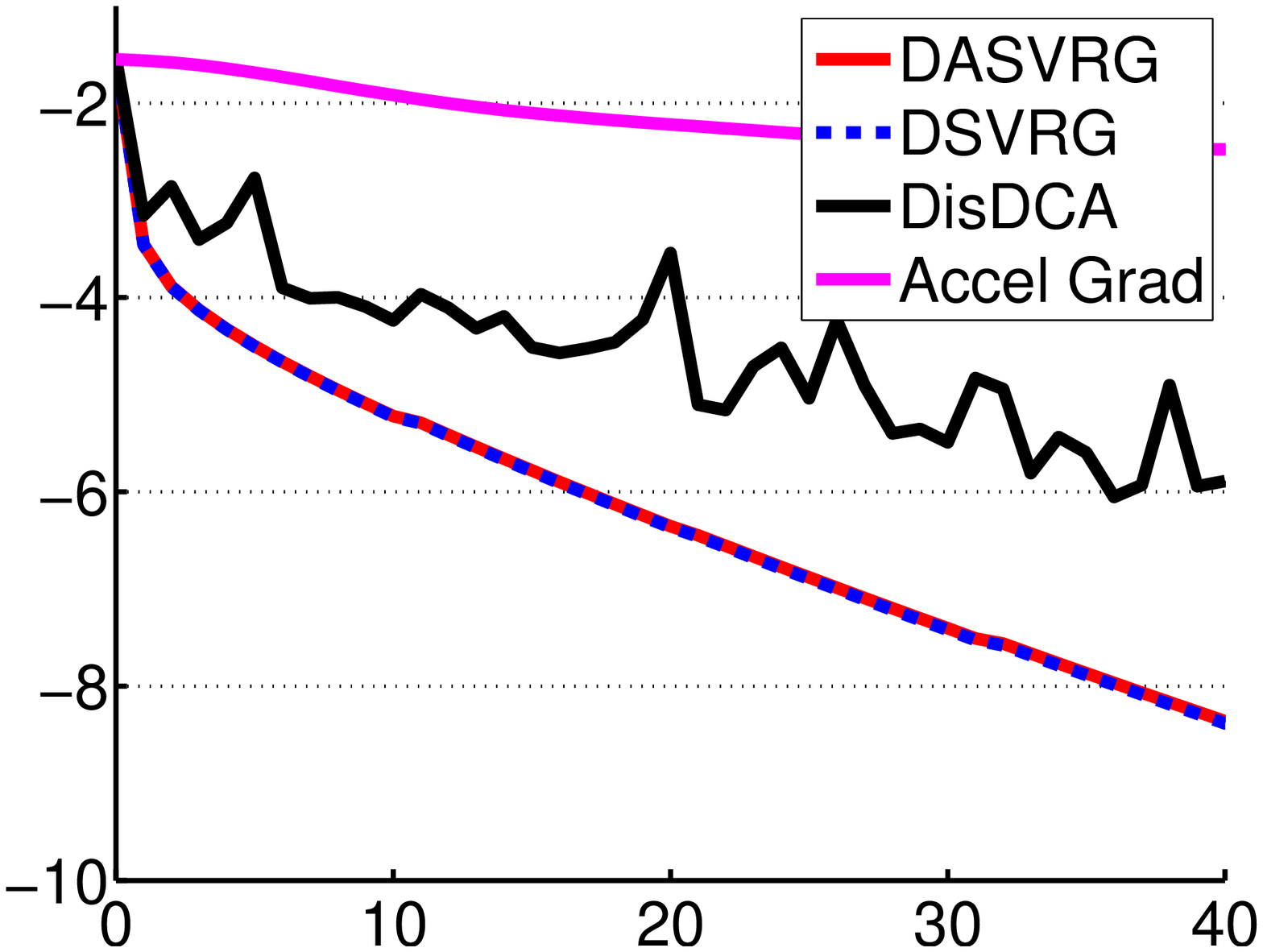}
        & \includegraphics[width=0.28\textwidth]{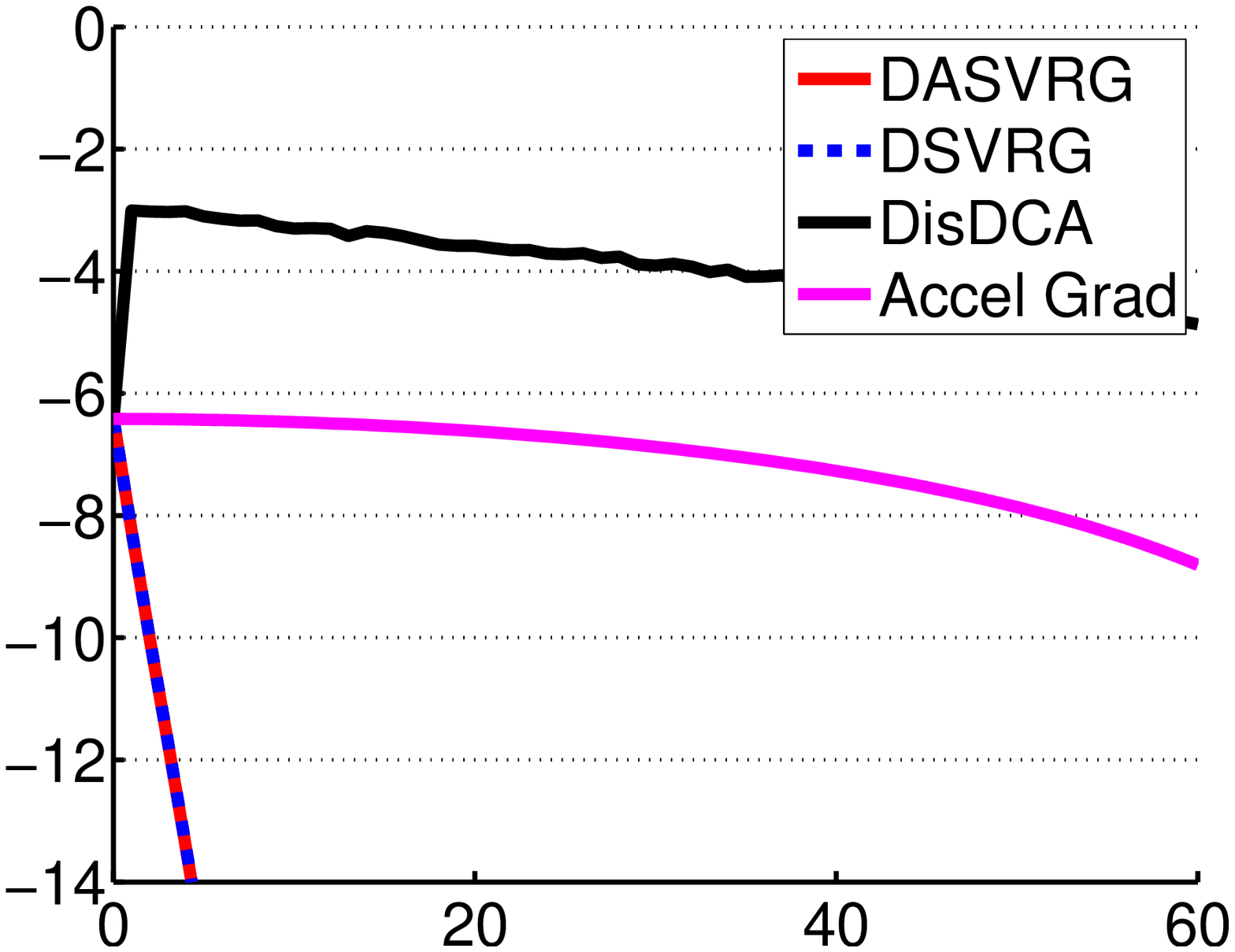}
        & \includegraphics[width=0.28\textwidth]{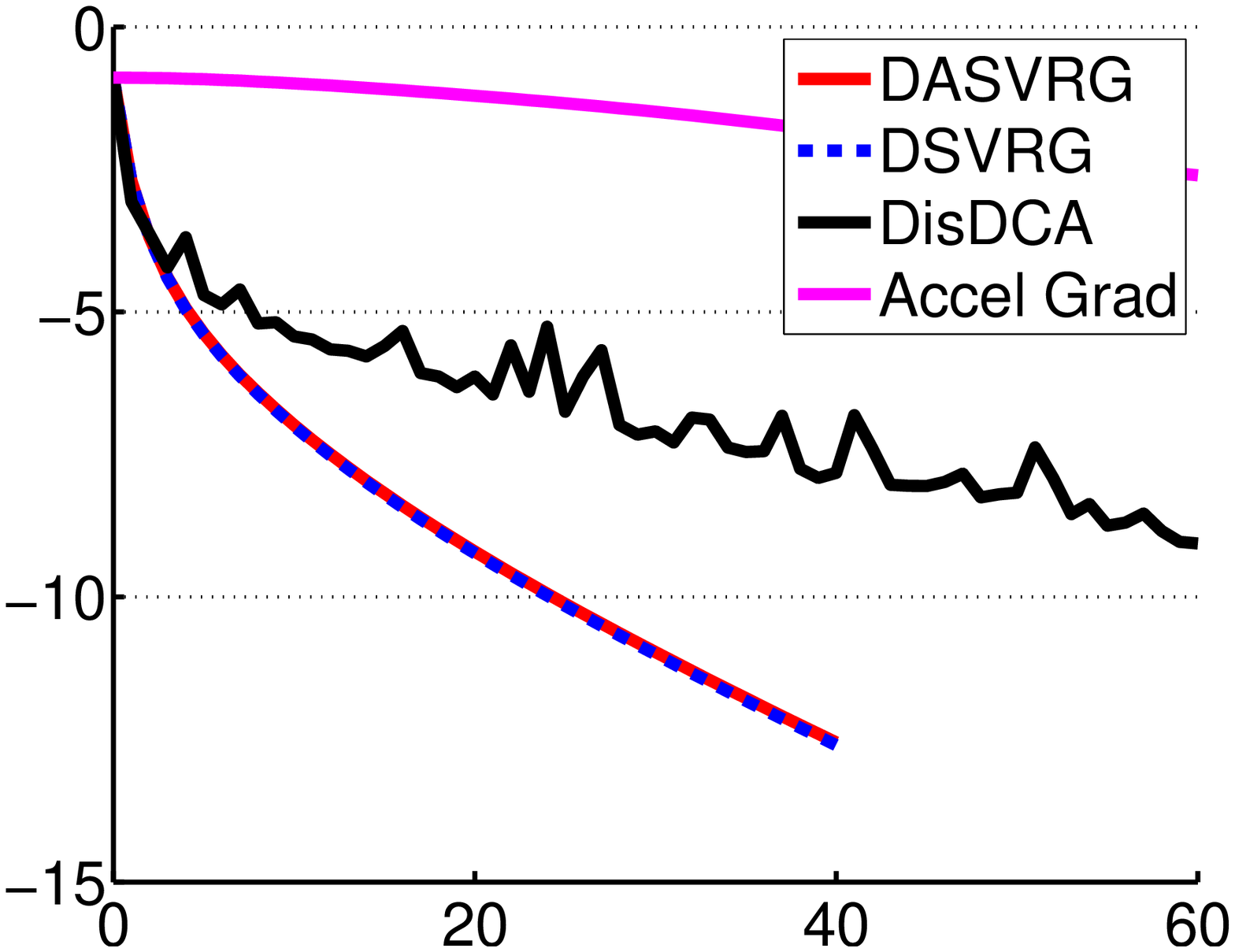}\\
\end{tabular}
\vspace{2ex}
\caption{Comparing the DSVRG and DASVRG methods with DisDCA and the accelerated gradient method (Accel Grad) in rounds.}
\label{fig:round}
\end{figure}

In this section, we conduct numerical experiments to compare our DSVRG and DASVRG algorithms with the DisDSCA~\cite{Yang:13} (with its practical updates) and a distributed implementation of the accelerated gradient method (Accel Grad) by Nesterov~\cite{Nesterov04book}. We apply these four algorithms to the ERM problem \eqref{eqn:regemr} with three  datasets\footnote{\url{http://www.csie.ntu.edu.tw/~cjlin/libsvmtools/datasets/binary.html}}: Covtype, Million Song and Epsilon. According to the types of data, the loss function $\phi(x,\xi)$ in \eqref{eqn:regemr} is chosen to be the square loss in ridge regression for Million Song and the logistic loss in logistic regression for the other two datasets. Following the previous work, we map the target variable of year from $1922 \sim 2011$ into $[0,1]$ for the Million Song data. We notice that the original Covtype and Million Song datasets are not very large (Covtype has 62M in the original size and Million Song has 450M in the original size), and both our algorithms and DisDCA can finish quickly on our server.  Therefore, to make comparison among these algorithms in a more challenging setting, we conduct experiments  using random Fourier features  (RFF)~\cite{rahimi2007random} on Covtype and Million Song datasets. The RFF is a popular method for solving large-scale kernel methods by generating finite dimensional features, of which the inner product approximate the kernel similarity. We generate RFF corresponding to RBF kernel.  Finally, Covtype data has $N=522,911$ examples $d=1,000$ features, Million Song data has $N=463,715$ examples and $d=2,000$ features. Since the original Epsilon data is large enough (12G), we use its original features.


The experiments are conducted on one server (Intel(R) Xeon(R) CPU E5-2667 v2 \@ 3.30GHz) with multiple processes with each process simulating one machine. We first choose the number of processes (machines) to be $m=5$. To test the performances of algorithms for different condition numbers, we choose the value of the regularization parameter $\lambda$ in \eqref{eqn:regemr} to be $1/N^{0.5}$, $1/N^{0.75}$ and $1/N$. For each setting, $L$ is computed as $\frac{\max_{i=1,\dots,N}\|a_i\|^2}{\gamma}+\lambda$ where $\frac{1}{\gamma}$ is the Lipschiz continuous constant of $\nabla_x \phi(x,\xi)$, and $\mu$ is equal to $\lambda$. We implement DSVRG by choosing $\eta=\frac{1}{L}$, $T=10,000$ and $K=\frac{N}{T}$. For DASVRG, we choose $\eta=\frac{1}{L}$, $T=10,000$, $K=1$ and $P=\frac{N}{T}$. In both DSVRG and DASVRG, we directly choose $R_j=S_j$ (so that $|R_j|=\frac{N}{m}$ and $Q=N$) since it saves the time for data allocation and, in practice, gives performances very similar to the performances when $R_j$ is sampled separately. For DisDCA, we use SDCA~\cite{SSZhang13SDCA} as the local solver so that it is equivalent to the implementation of CoCoA+ with $\sigma'=m$ and $\gamma=1$ as in the experiments in~\cite{ma2015adding}. We run SDCA for $T=10,000$ iterations in each round of DisDCA with $\frac{N}{T}$ rounds in total.

The numerical results are presented in Figure~\ref{fig:round} and Figure~\ref{fig:time}. The horizontal axis presents the number of rounds of communication conducted by algorithms in Figure~\ref{fig:round} and presents the parallel runtime (in seconds) used by the algorithms in Figure~\ref{fig:time}. In both figures, the vertical axis represents the logarithm of optimality gap. According to Figure~\ref{fig:round} and Figure~\ref{fig:time}, the performances of all algorithms get worsen when $\lambda$ decreases (so the condition number increases). We find that DSVRG and DASVRG have almost identical performances in rounds of communication and they both outperform the other two methods significantly. This shows the merit of our methods when applied to computer clusters with a high communication cost due to significant network delay. DSVRG and DASVRG have slightly different performances in runtime and they outperform the other two methods in Million Song data and obtain a comparable performance on Covtype data. DSVRG and DASVRG do not perform as good as DisDCA in runtime on Epsilon data.

\begin{figure}[t]
    \begin{tabular}[h]{@{}c|ccc@{}}
    $\lambda$ & Covtype(RFF) & Million Song(RFF) & Epsilon \\
    \hline \\
    \raisebox{10ex}{$\frac{1}{N^{\frac{1}{2}}}$}
        &\quad \includegraphics[width=0.28\textwidth]{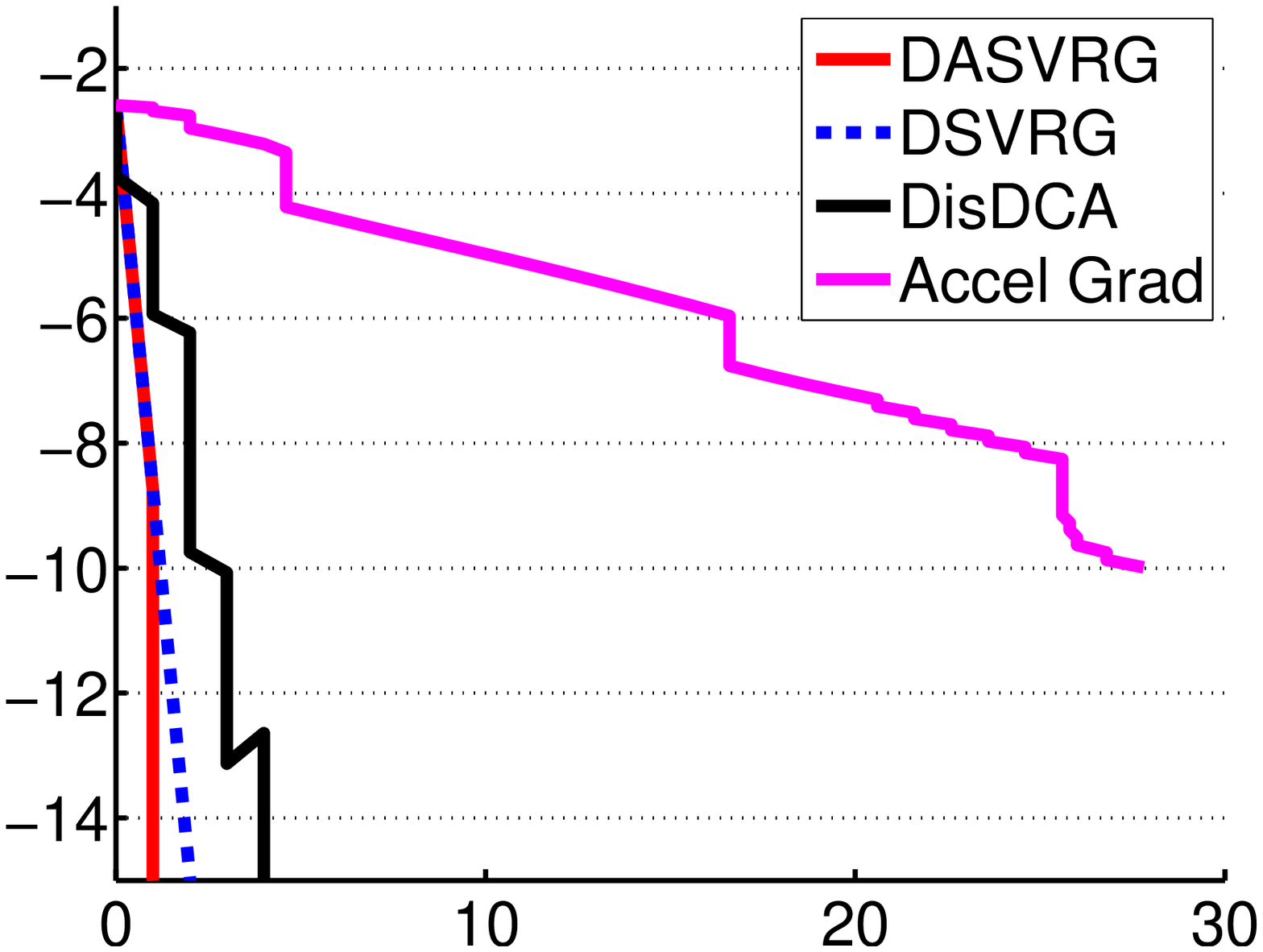}
        & \includegraphics[width=0.28\textwidth]{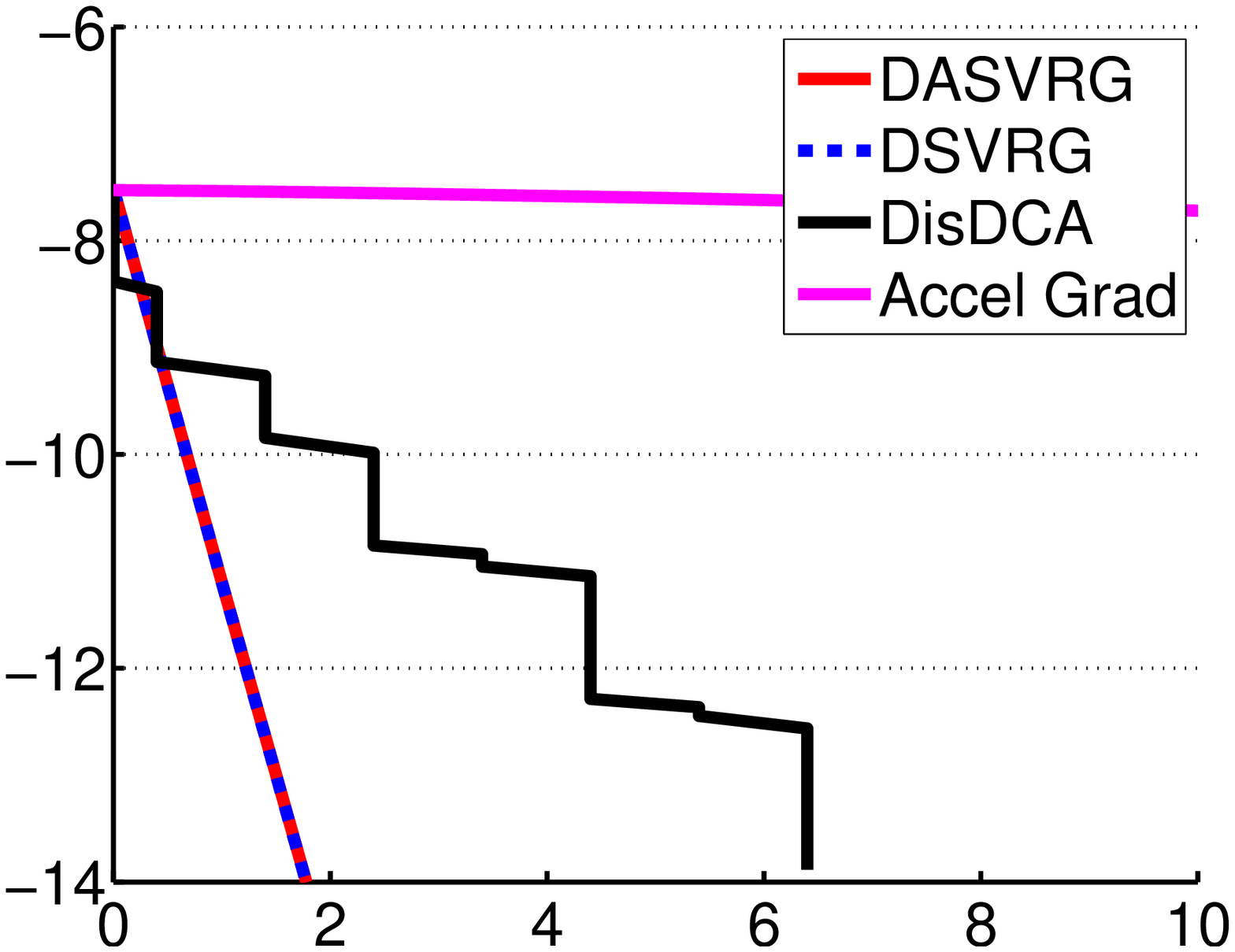}
        & \includegraphics[width=0.28\textwidth]{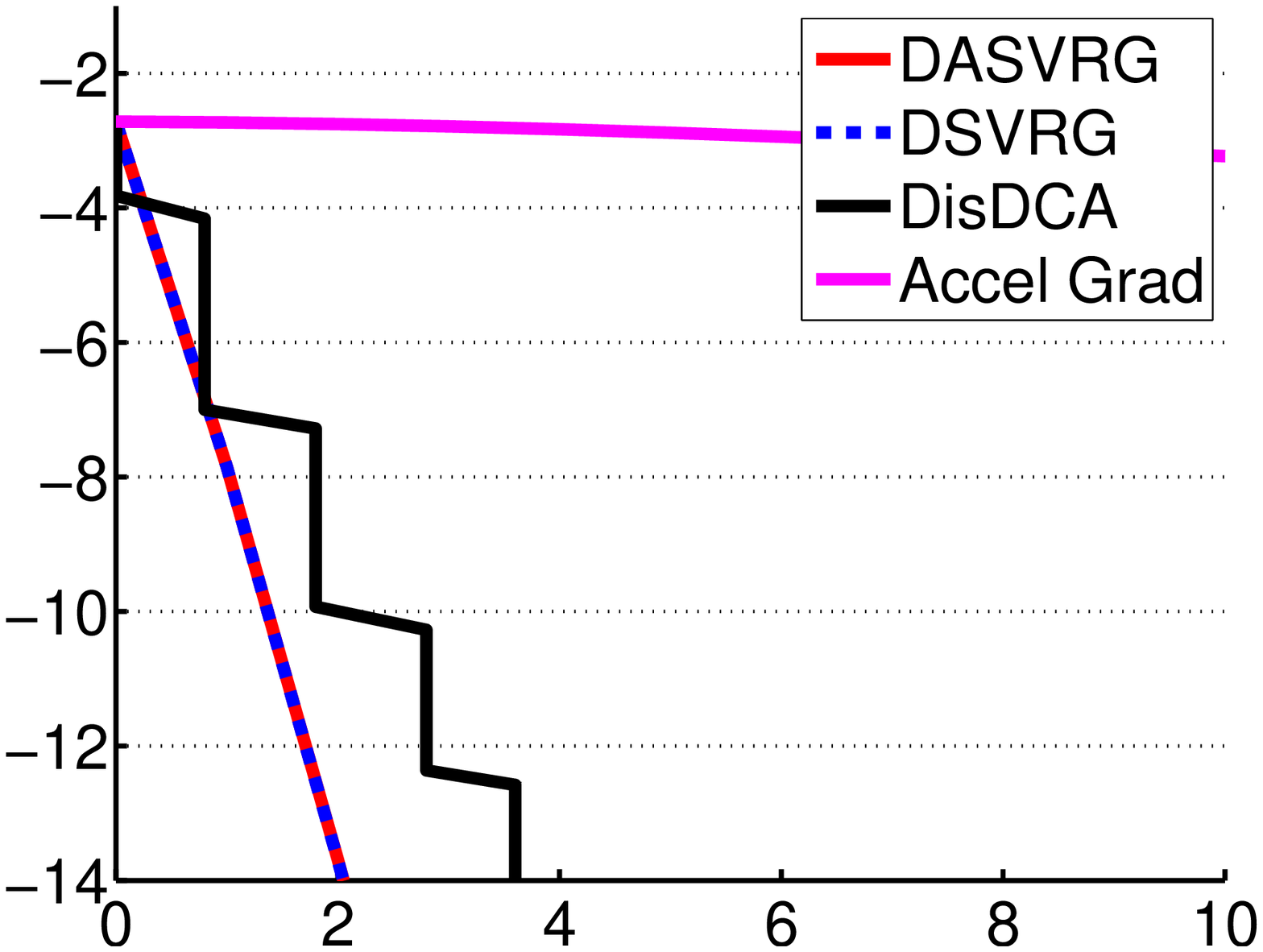}\\
    \raisebox{10ex}{$\frac{1}{N^{\frac{3}{4}}}$}
        &\quad \includegraphics[width=0.28\textwidth]{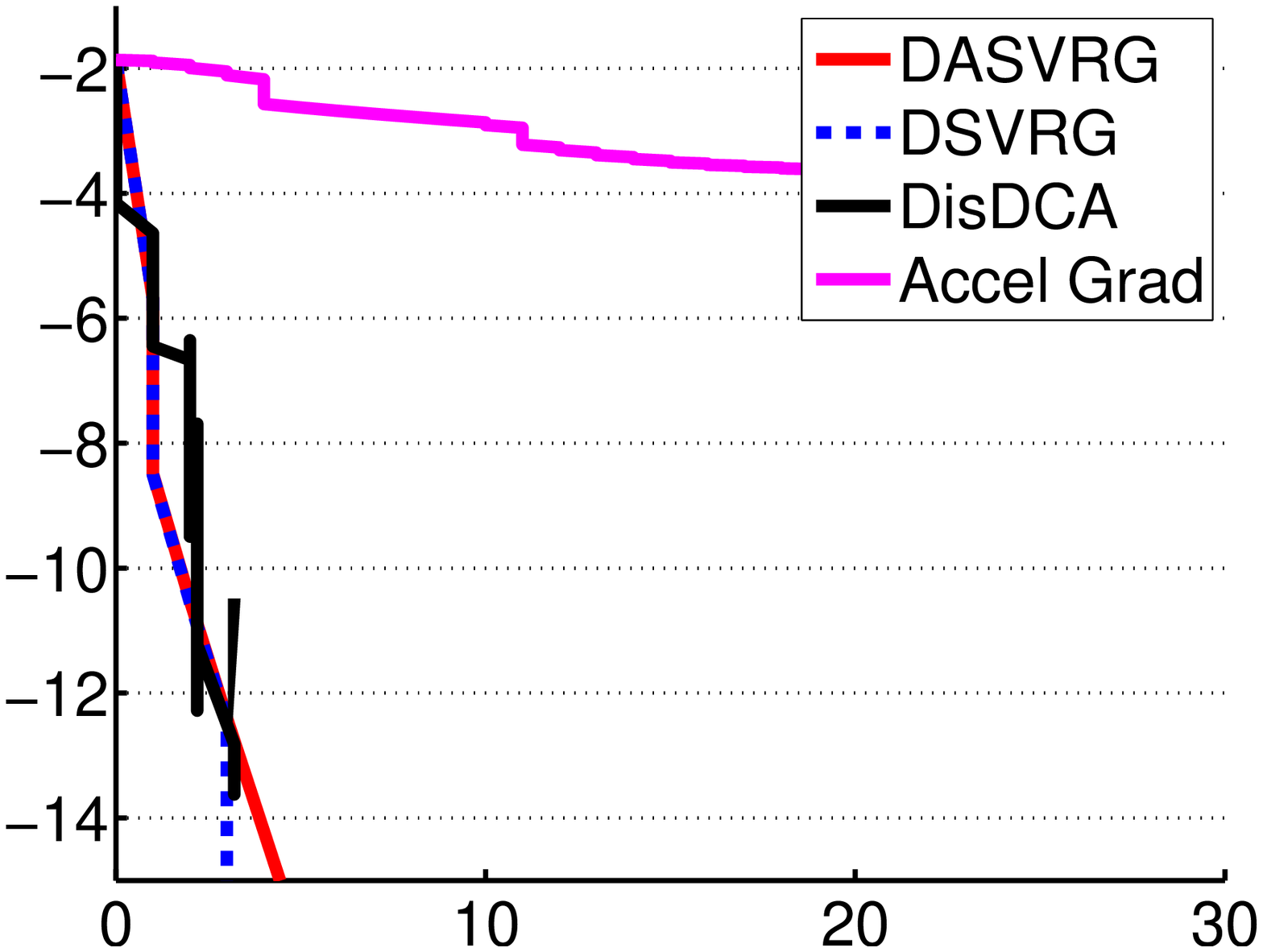}
        & \includegraphics[width=0.28\textwidth]{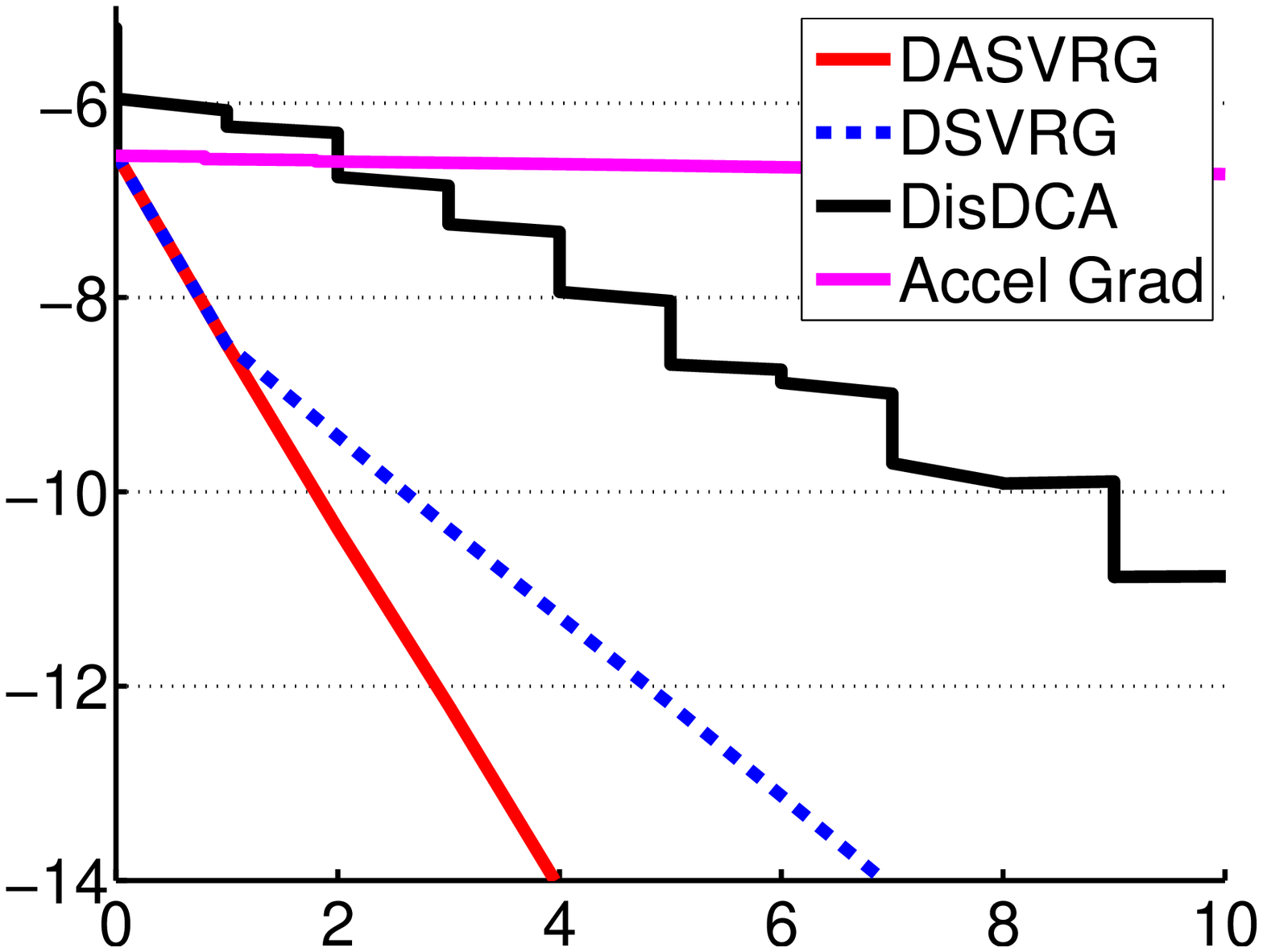}
        & \includegraphics[width=0.28\textwidth]{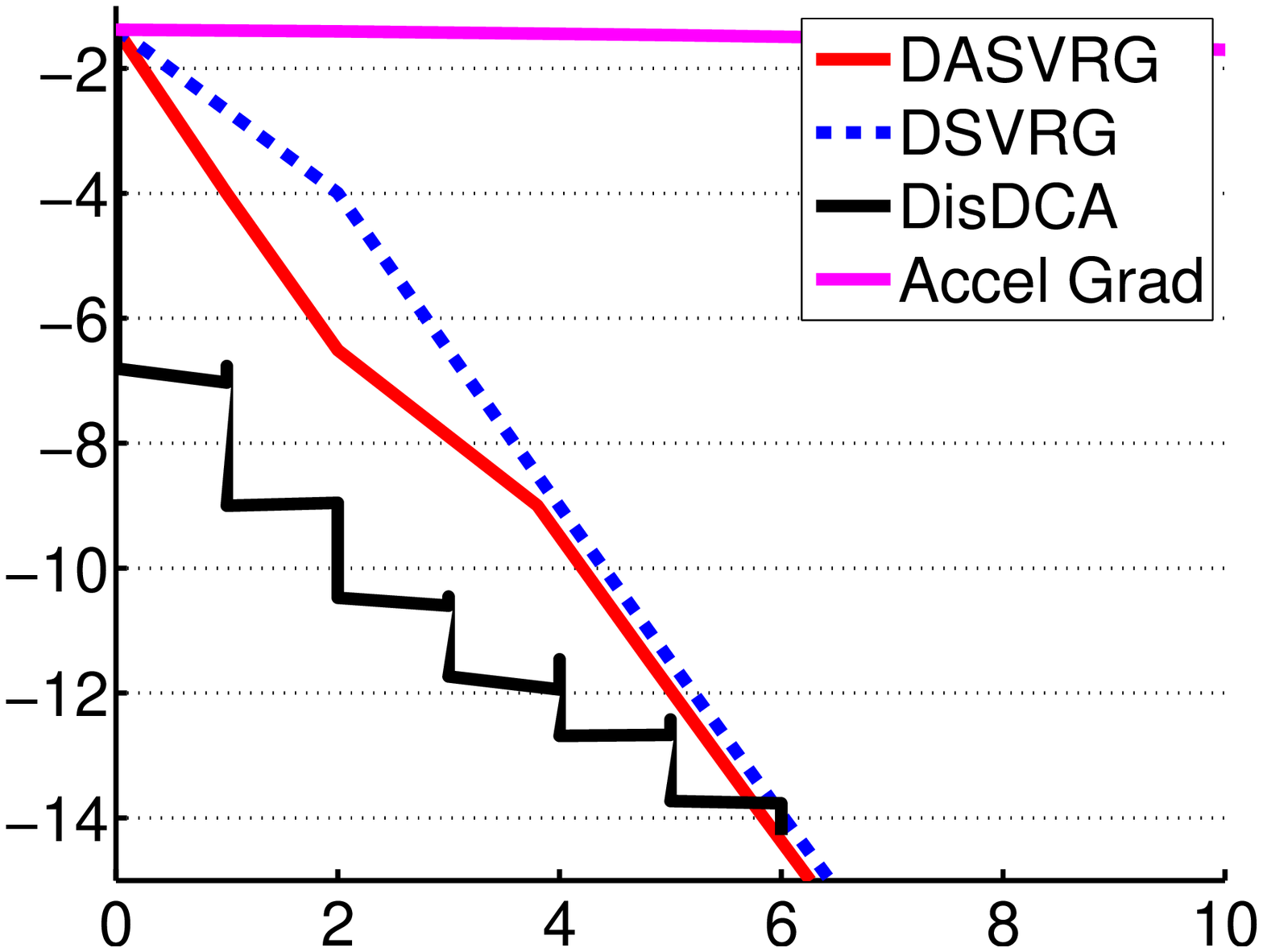}\\
    \raisebox{10ex}{$\frac{1}{N}$}
        &\quad \includegraphics[width=0.28\textwidth]{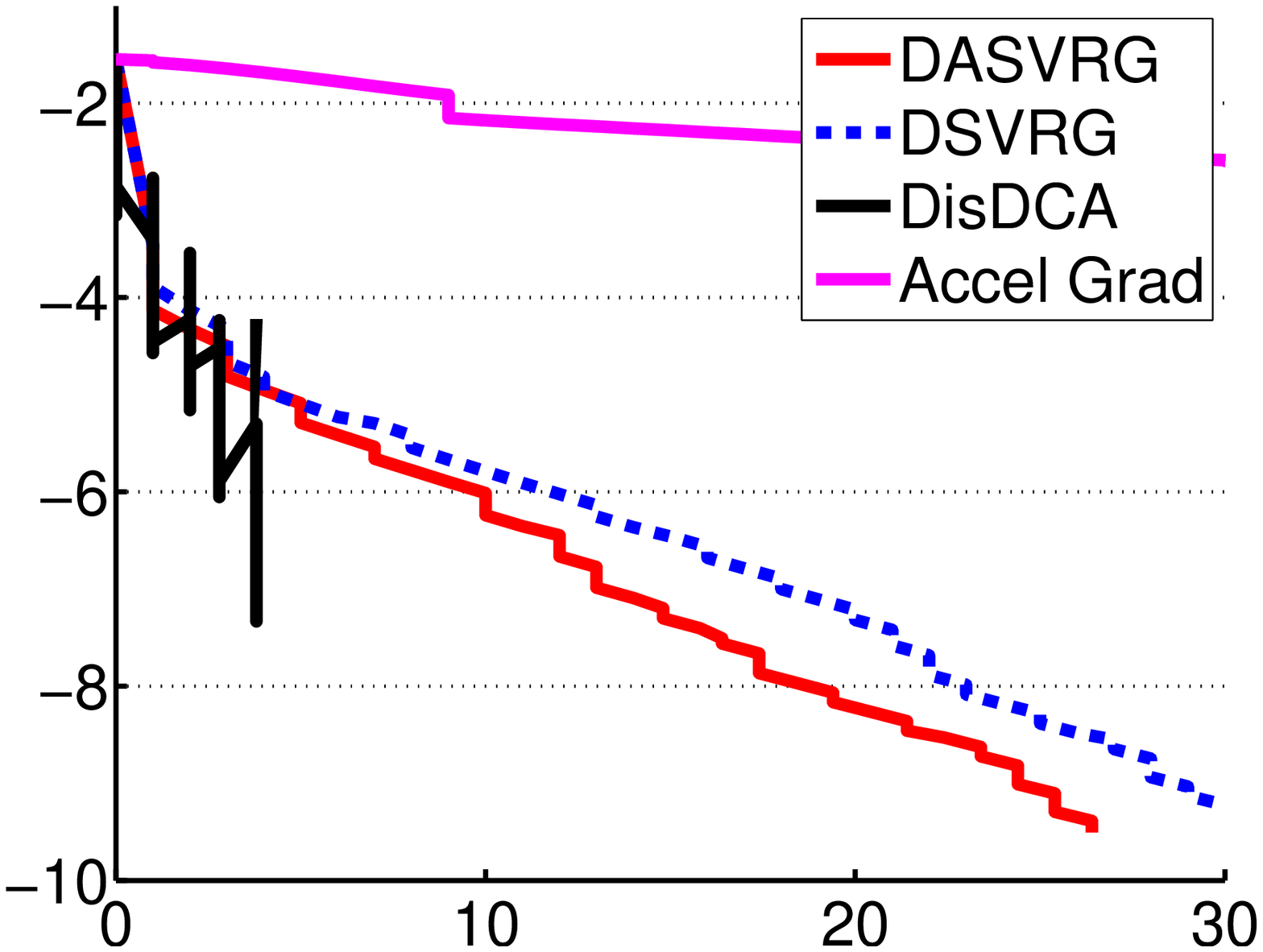}
        & \includegraphics[width=0.28\textwidth]{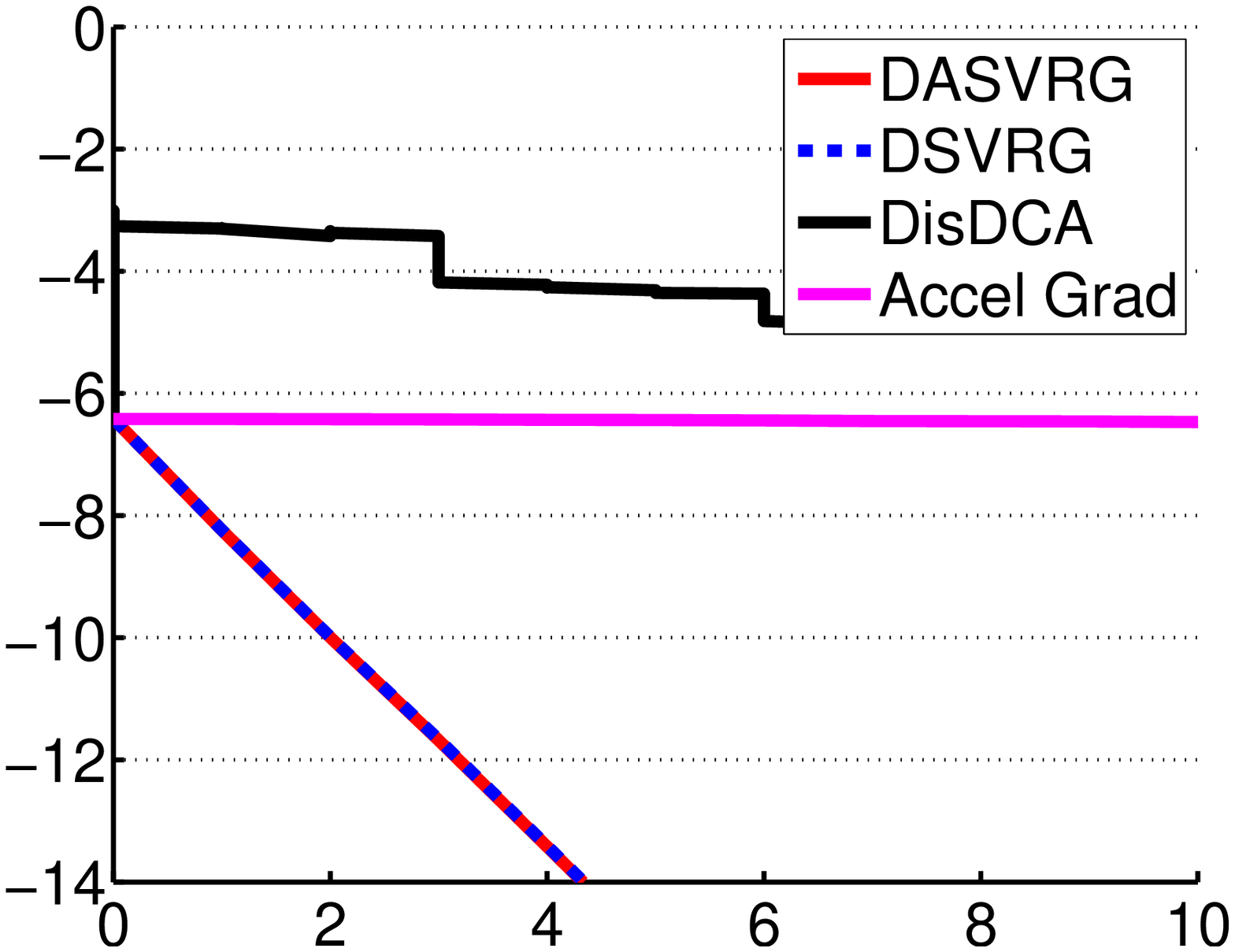}
        & \includegraphics[width=0.28\textwidth]{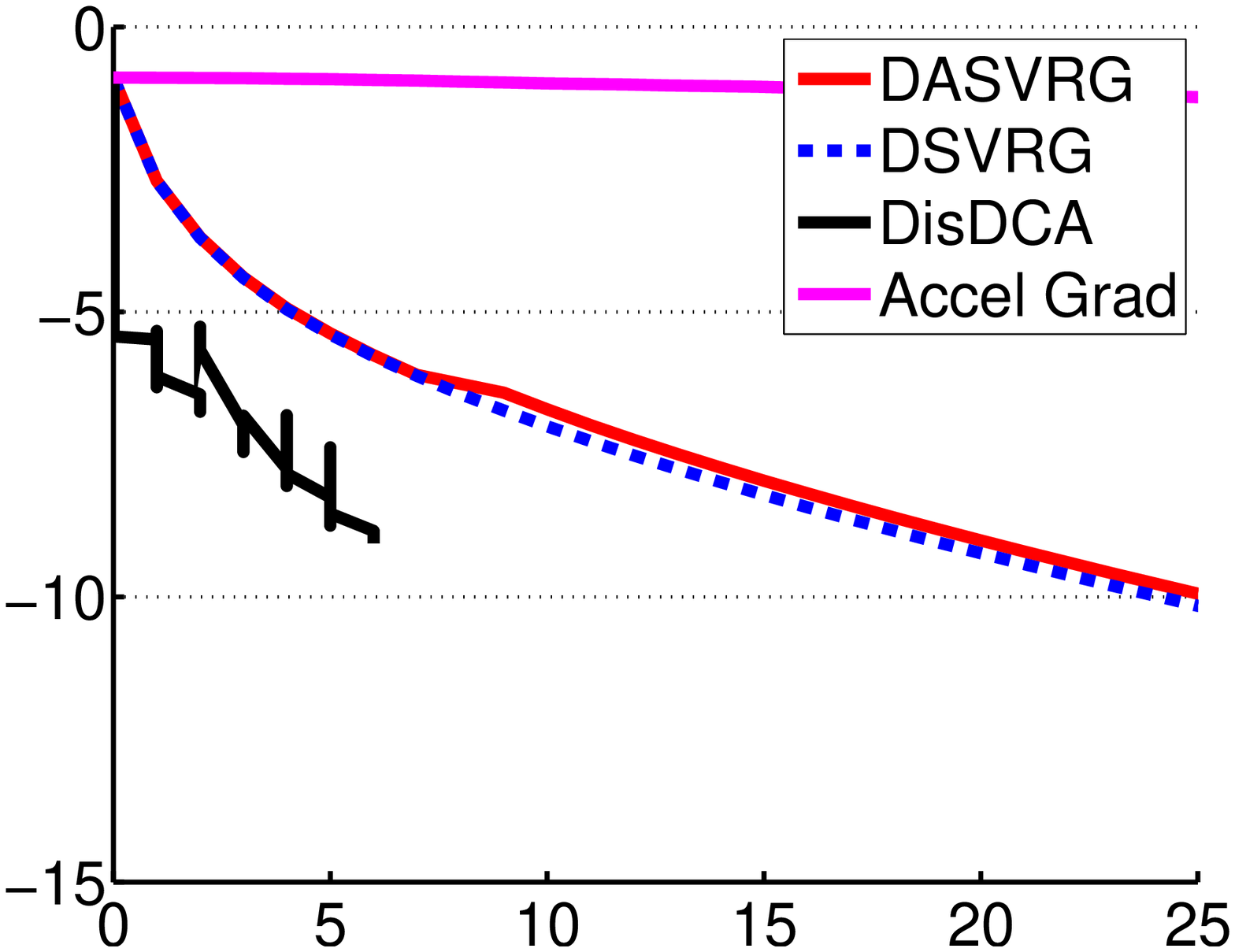}\\
\end{tabular}
\vspace{2ex}
\caption{Comparing the DSVRG and DASVRG methods with DisDCA and the accelerated gradient method (Accel Grad) in runtime.
}
\label{fig:time}
\end{figure}

To compare the performances of algorithms under different values of $m$. We choose the $m=10$ and $15$ and repeat the same experiments on Epsilon data. The numerical results based on the rounds and runtime are shown in Figure~\ref{fig:machine_round} and Figure~\ref{fig:machine_time} respectively. Similar to the case of $m=5$, our DSVRG and DASVRG requires fewer rounds to reach the same $\epsilon$-optimal solution but might require longer runtime on some dataset.

\begin{figure}[t]
    \begin{tabular}[h]{@{}c|ccc@{}}
    $m$ & $\lambda=\frac{1}{N^{\frac{1}{2}}}$ & $\lambda=\frac{1}{N^{\frac{3}{4}}}$ & $\lambda=\frac{1}{N}$ \\
    \hline \\
    \raisebox{10ex}{$10$}
        &\quad \includegraphics[width=0.28\textwidth]{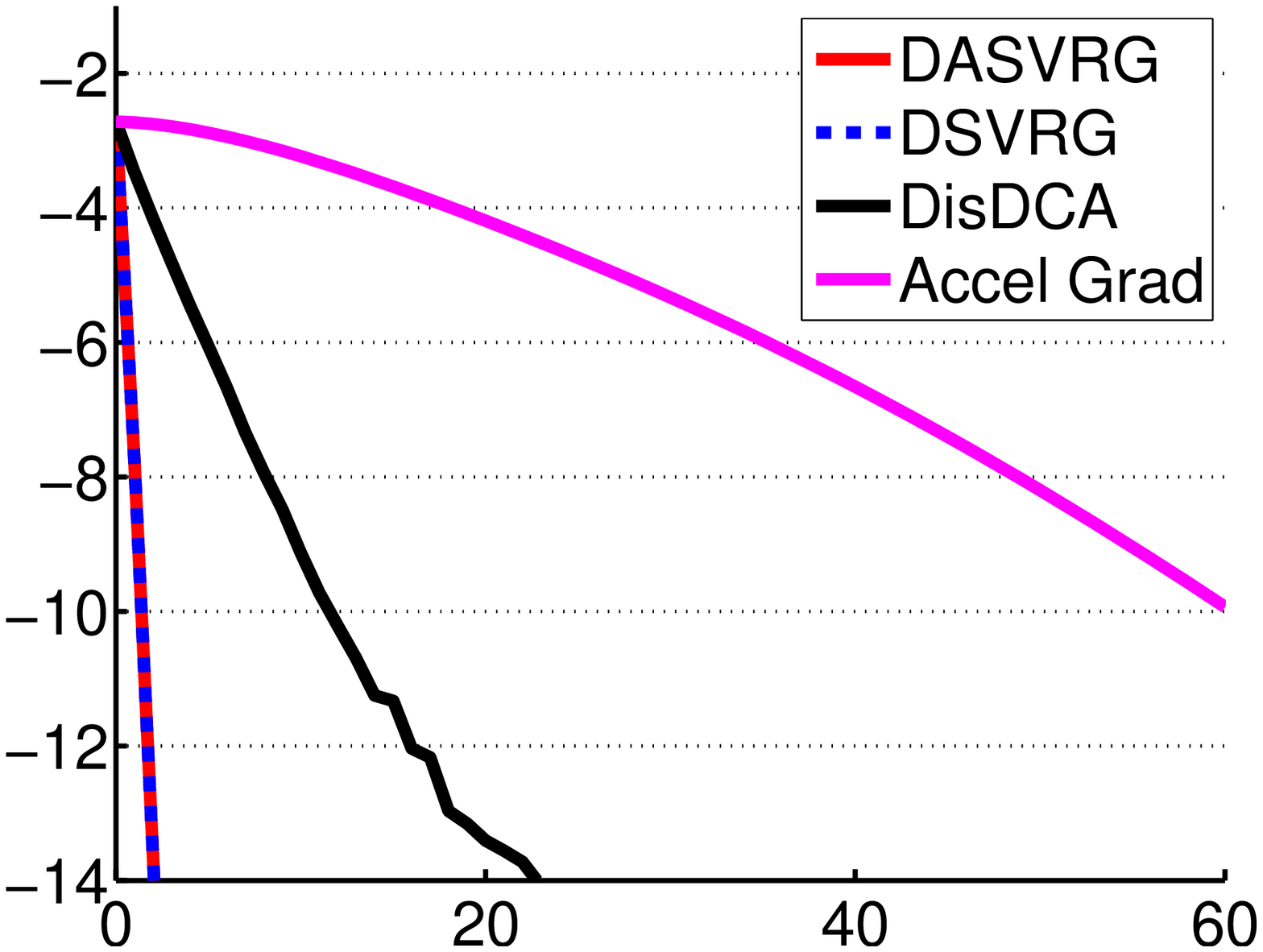}
        & \includegraphics[width=0.28\textwidth]{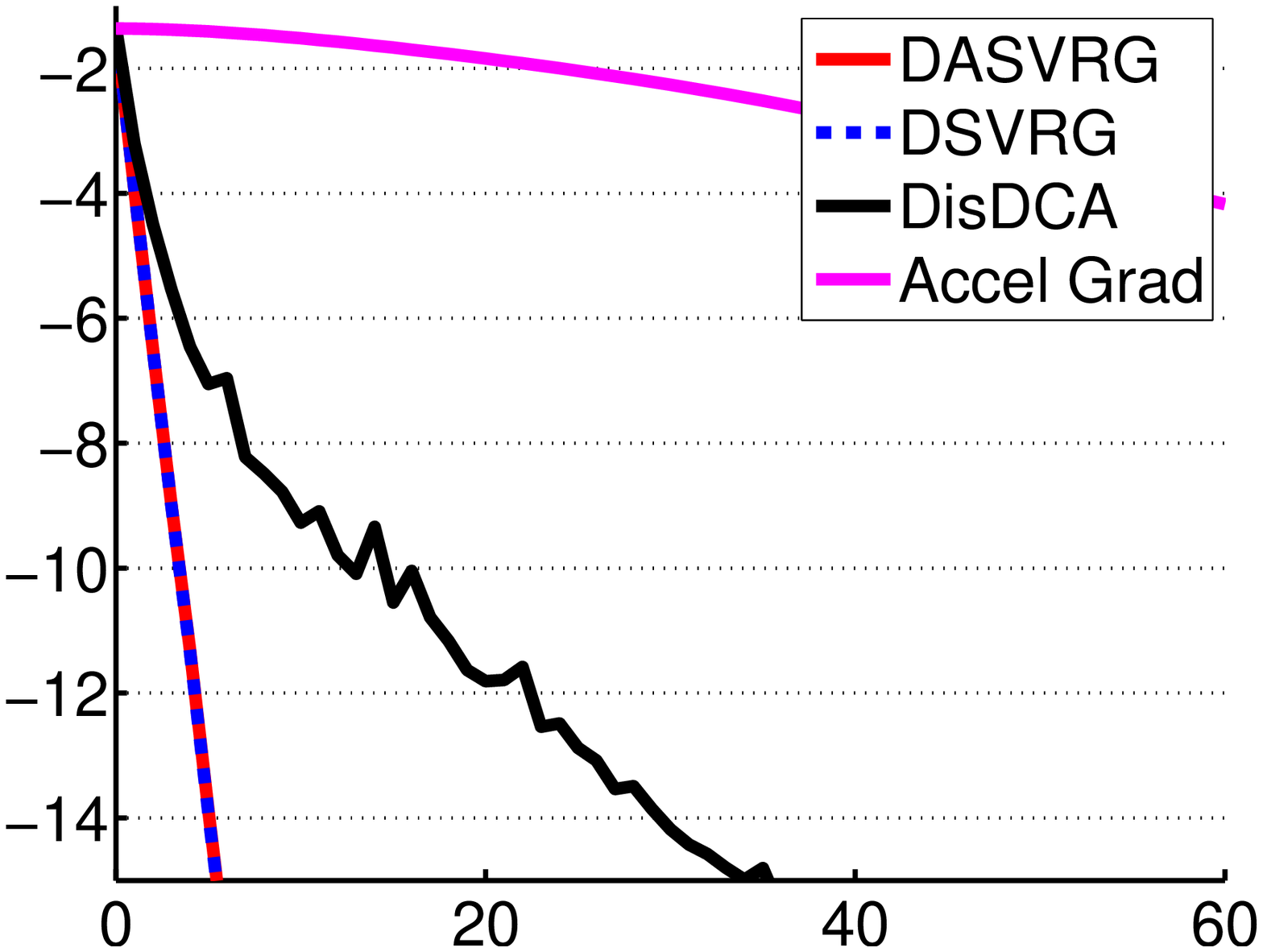}
        & \includegraphics[width=0.28\textwidth]{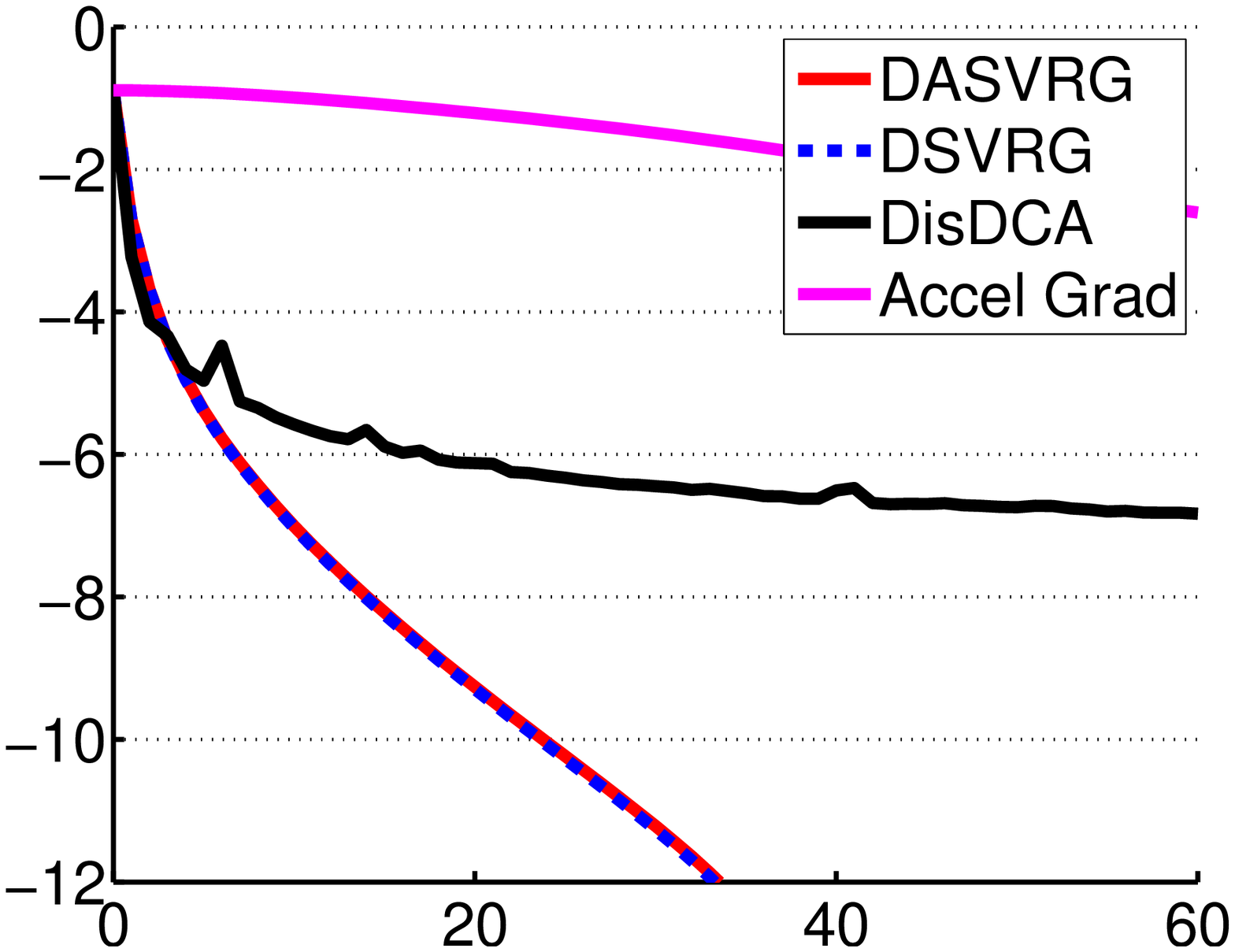}\\
    \raisebox{10ex}{$15$}
        &\quad \includegraphics[width=0.28\textwidth]{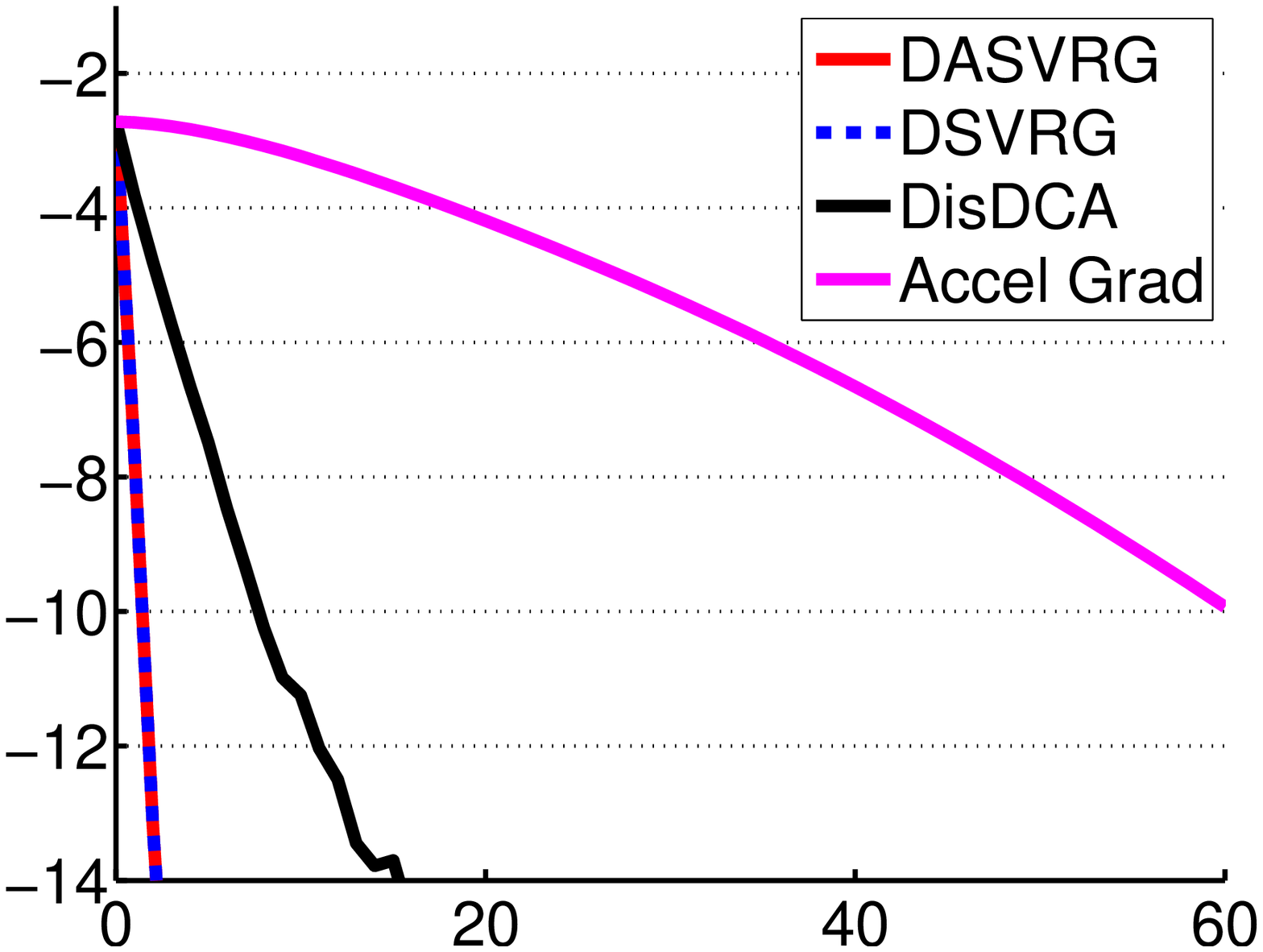}
        & \includegraphics[width=0.28\textwidth]{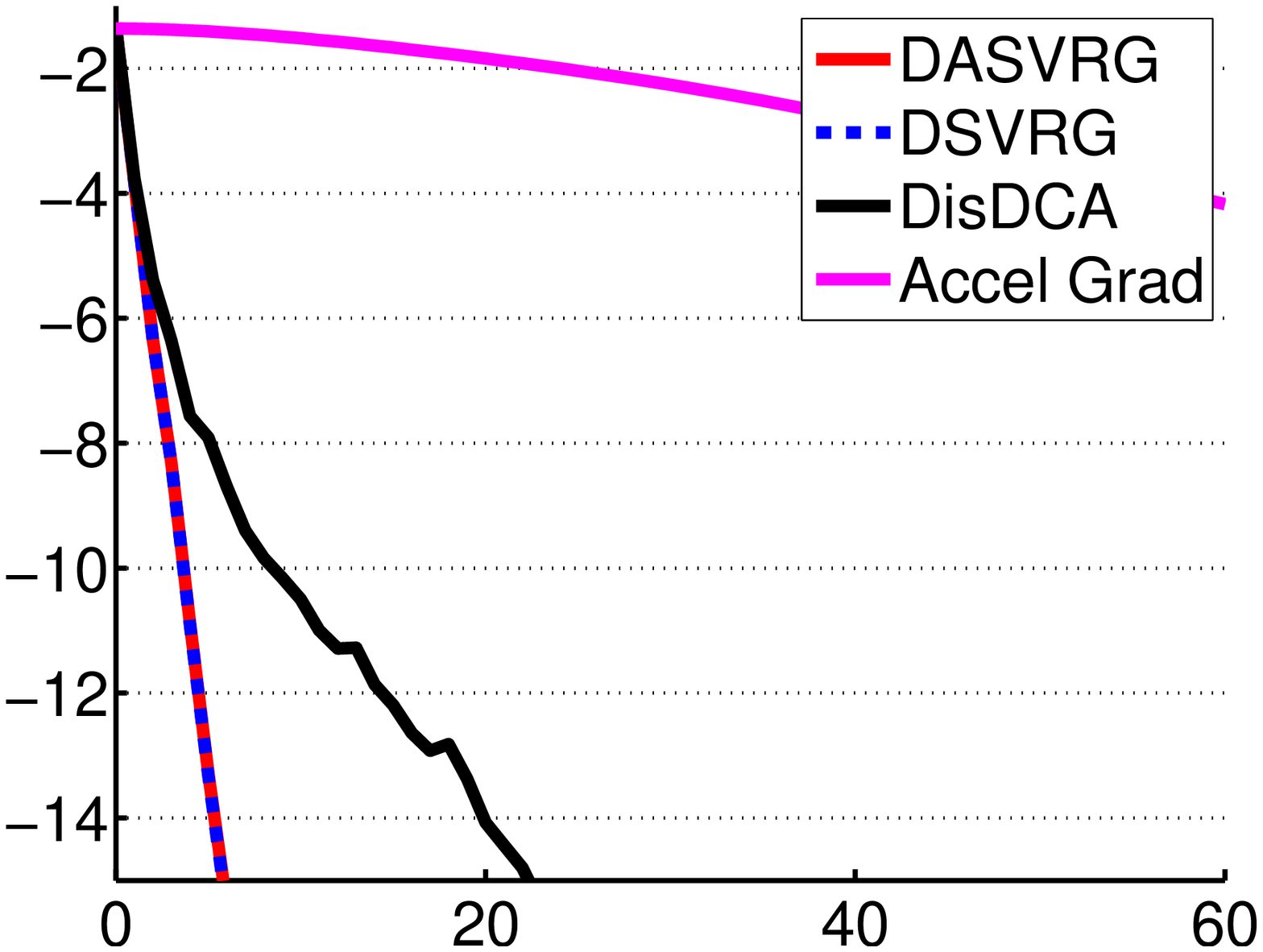}
        & \includegraphics[width=0.28\textwidth]{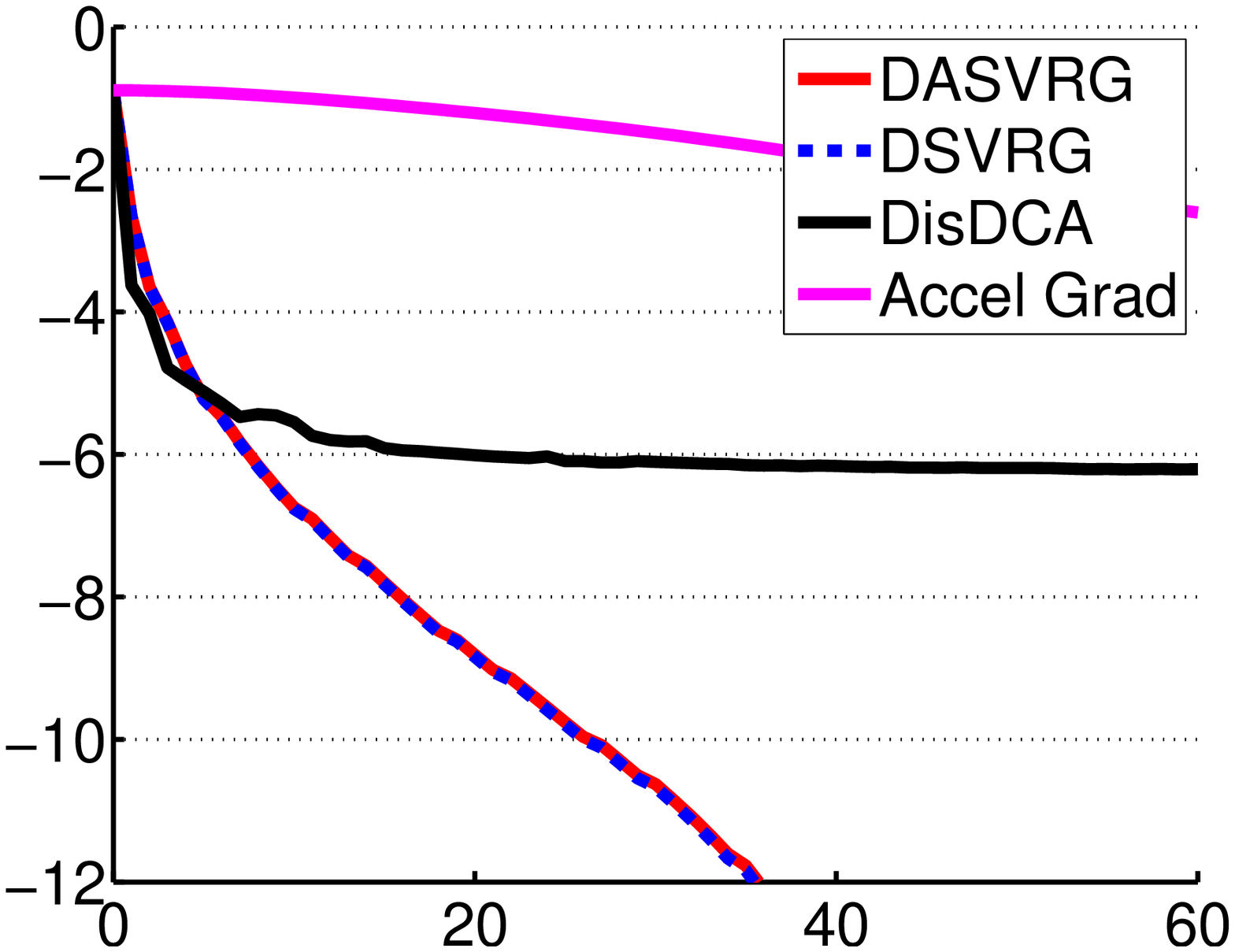}\\
\end{tabular}
\vspace{2ex}
\caption{Comparing the DSVRG and DASVRG methods with DisDCA and the accelerated gradient method (Accel Grad) in rounds.
}
\label{fig:machine_round}
\end{figure}

\begin{figure}[t]
    \begin{tabular}[h]{@{}c|ccc@{}}
    $m$ & $\lambda=\frac{1}{N^{\frac{1}{2}}}$ & $\lambda=\frac{1}{N^{\frac{3}{4}}}$ & $\lambda=\frac{1}{N}$ \\
    \hline \\
    \raisebox{10ex}{$10$}
        &\quad \includegraphics[width=0.28\textwidth]{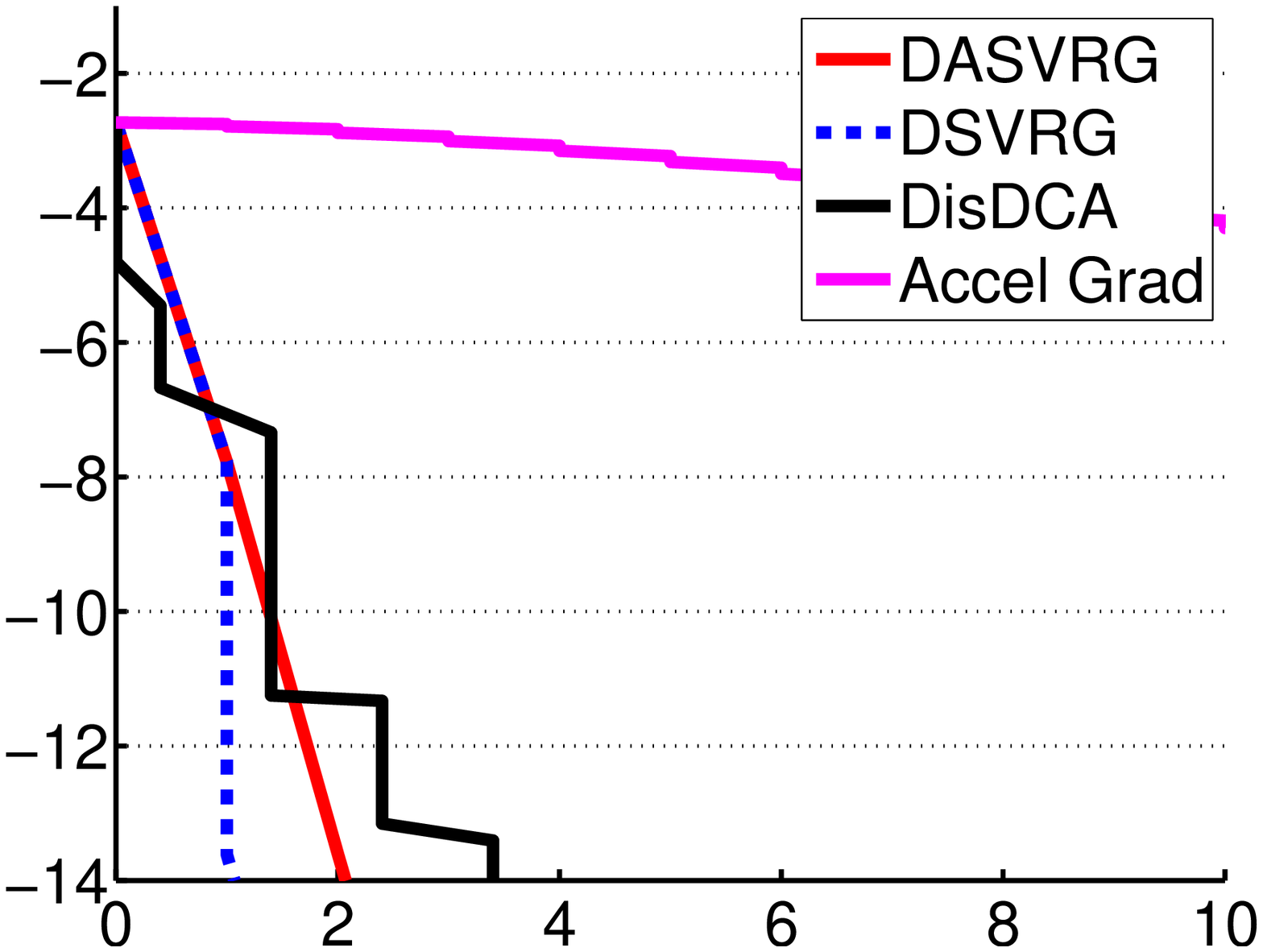}
        & \includegraphics[width=0.28\textwidth]{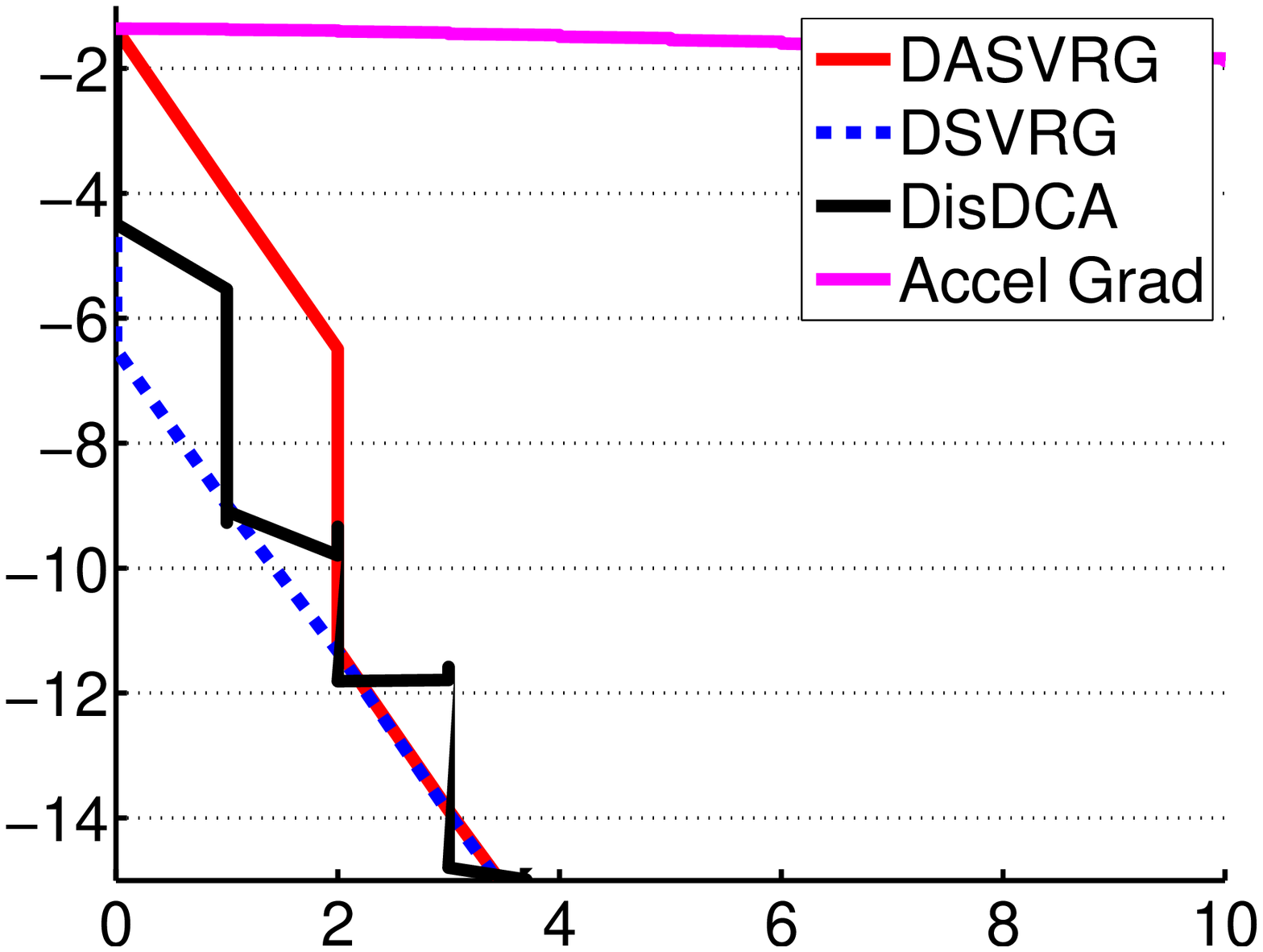}
        & \includegraphics[width=0.28\textwidth]{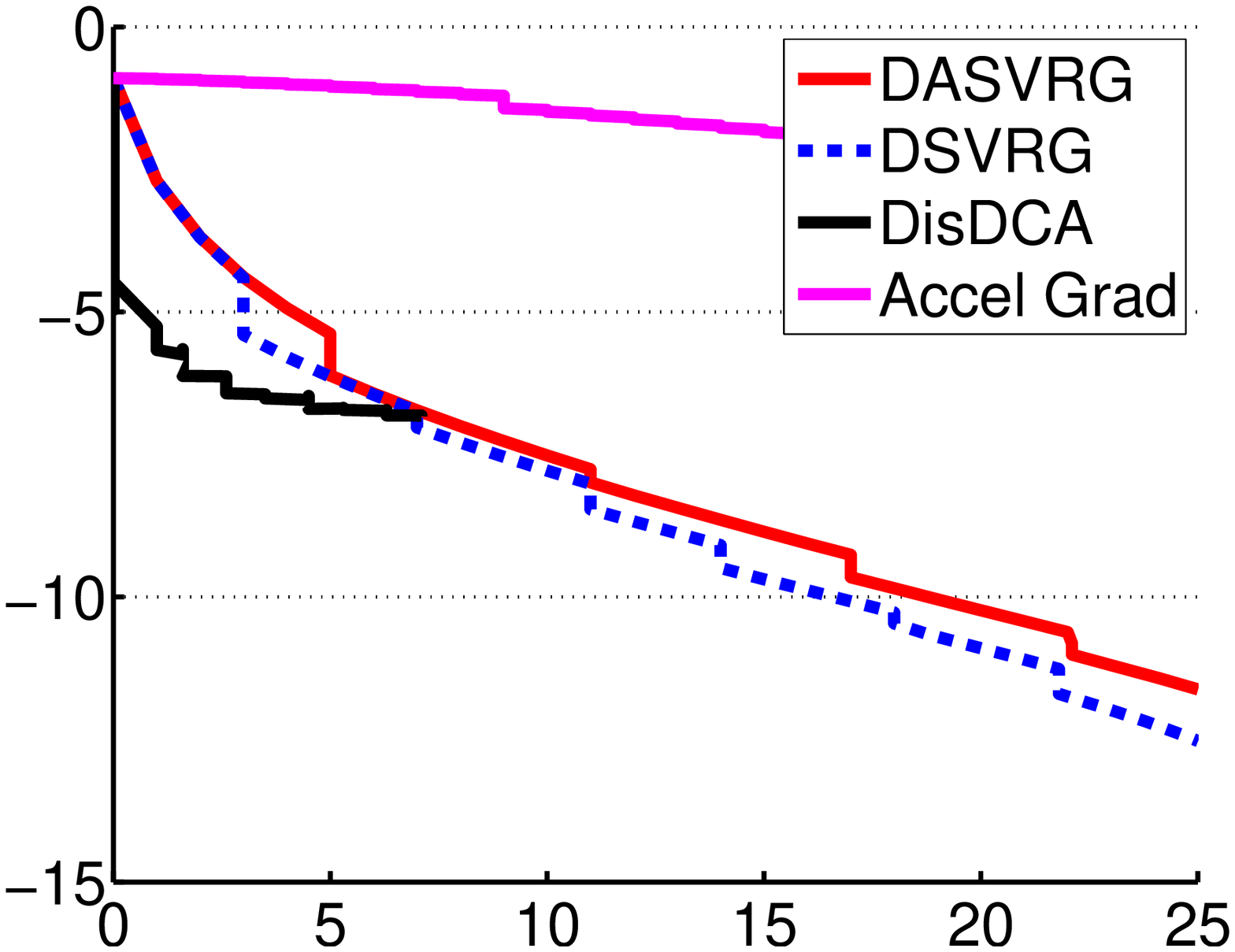}\\
    \raisebox{10ex}{$15$}
        &\quad \includegraphics[width=0.28\textwidth]{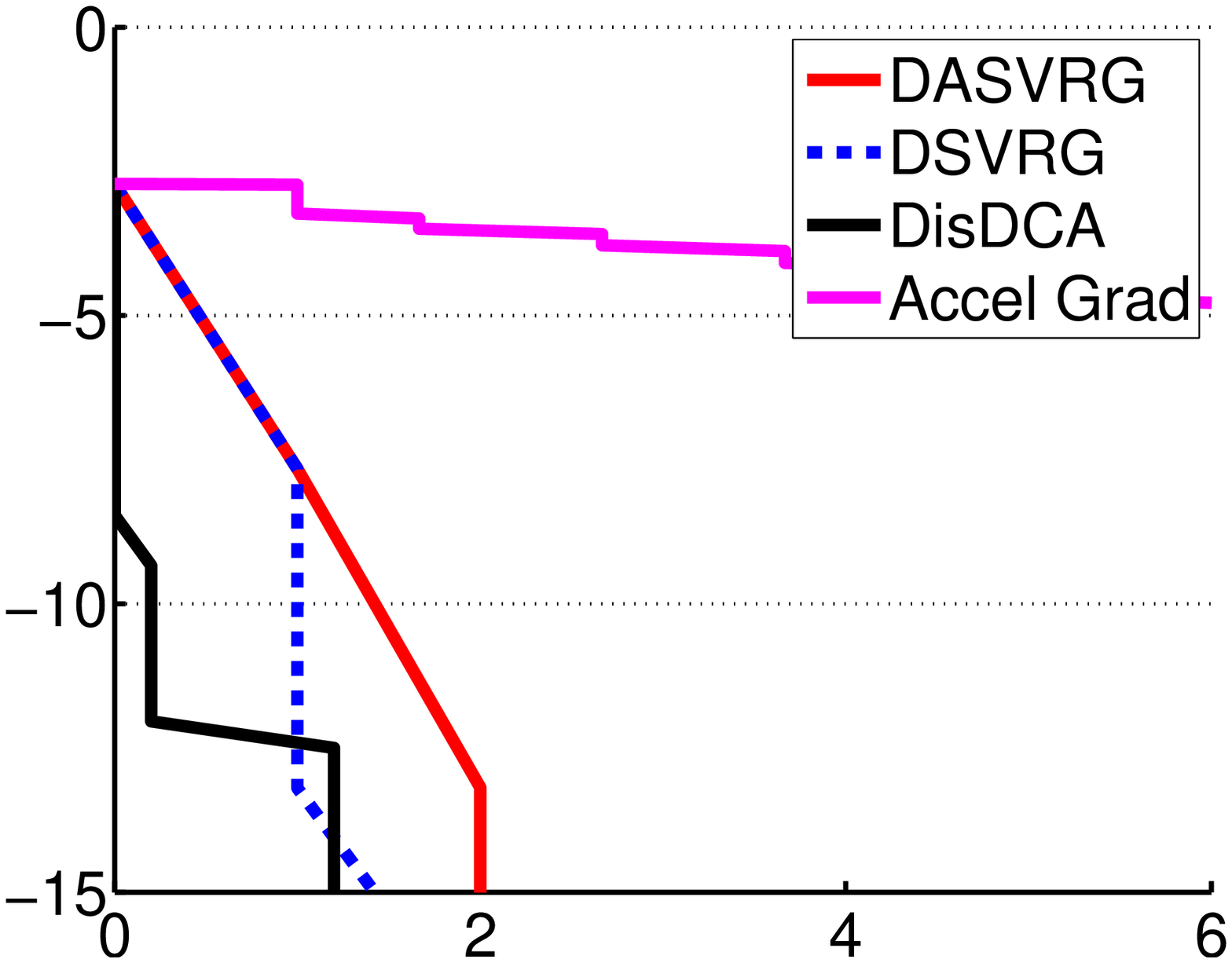}
        & \includegraphics[width=0.28\textwidth]{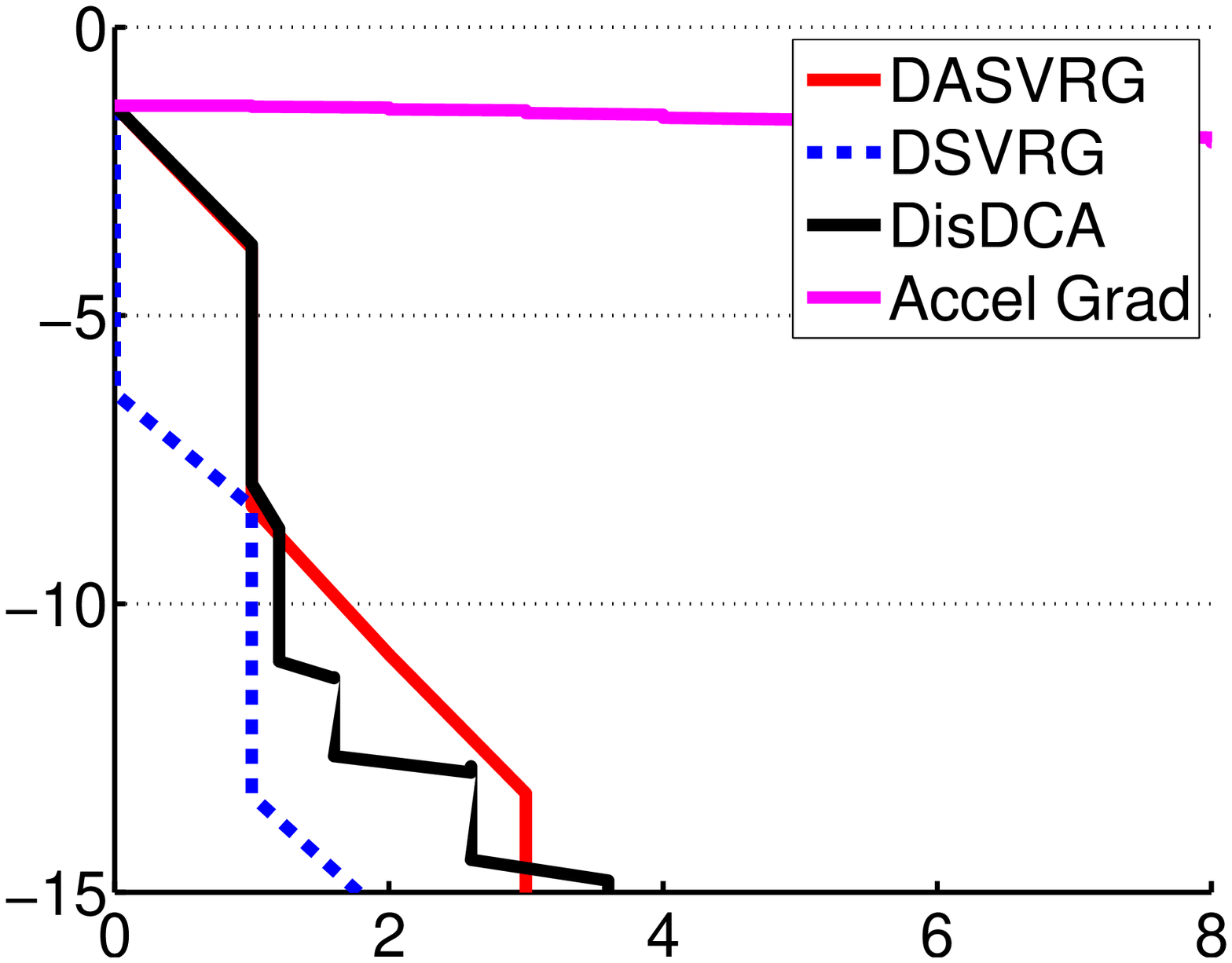}
        & \includegraphics[width=0.28\textwidth]{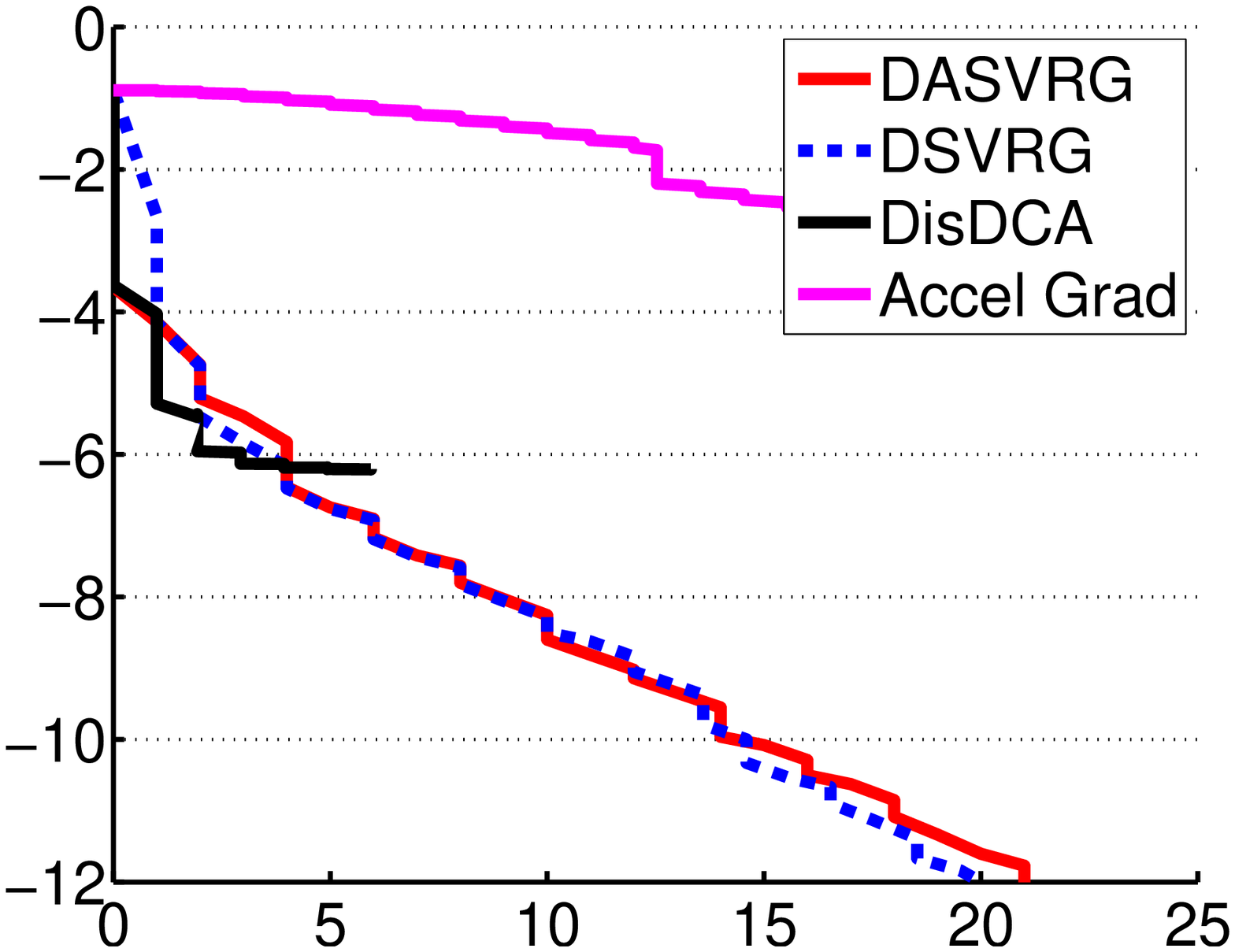}\\
\end{tabular}
\vspace{2ex}
\caption{Comparing the DSVRG and DASVRG methods with DisDCA and the accelerated gradient method (Accel Grad) in runtime.
}
\label{fig:machine_time}
\end{figure}

\section{Conclusion}
\label{sec:conclusion}
We propose a DSVRG algorithm for minimizing the average of $N$ convex functions which are stored in $m$ machines. Our algorithm is a distributed extension of the existing SVRG algorithm, where we compute the batch gradients in parallel while let machines perform iterative updates in serial. Assuming sufficient memory in each machine, we develop an efficient data allocation scheme to store extra functions in each machine to construct the unbiased stochastic gradient in each iterative update. We provide theoretical analysis on the parallel runtime, the amount and the rounds of communication needed by DSVRG to find an $\epsilon$-optimal solution, showing that it is optimal under all of these three metrics under some practical scenario. Moreover, we proposed a DASVRG algorithm that requires even fewer rounds of communication than DSVRG and almost all existing distributed algorithms
using an acceleration strategy by~\cite{frostigicml15} and~\cite{lin2015universal}.

\begin{acknowledgements}
We would like to thank Roy Frostig, Hongzhou Lin, Lin Xiao, and Yuchen Zhang for numerous helpful
discussions throughout various stages of this work.
\end{acknowledgements}

\bibliography{pdcg}
\bibliographystyle{plain}


\end{document}